\documentclass[12pt,reqno]{amsart}
\usepackage[utf8]{inputenc}
\usepackage[T1]{fontenc}
\usepackage{csquotes}
\usepackage{amsmath,amssymb,amsthm,latexsym}
\usepackage{mathtools}
\usepackage{geometry}
\usepackage[expansion=false]{microtype}
\usepackage{enumitem}
\usepackage{tabularx}
\usepackage{booktabs}
\usepackage{tikz-cd}
\usepackage{xcolor}
\usepackage[square,numbers]{natbib}
\usepackage{hyperref}
\hypersetup{colorlinks=true,linkcolor=blue!50!black,citecolor=green!40!black,
            urlcolor=blue!60!black}
\usepackage[nameinlink]{cleveref}
\crefalias{align}{equation}
\crefalias{gather}{equation}
\crefalias{multline}{equation}
\crefalias{alignat}{equation}

\crefformat{equation}{(#2#1#3)}

\crefname{theorem}{theorem}{theorems}
\Crefname{theorem}{Theorem}{Theorems}

\crefname{lemma}{lemma}{lemmas}
\Crefname{lemma}{Lemma}{Lemmas}

\crefname{proposition}{proposition}{propositions}
\Crefname{proposition}{Proposition}{Propositions}

\crefname{corollary}{corollary}{corollaries}
\Crefname{corollary}{Corollary}{Corollaries}

\crefname{definition}{definition}{definitions}
\Crefname{definition}{Definition}{Definitions}
\allowdisplaybreaks

\newcommand{\A}{\mathbb{A}}                 
\newcommand{\Mon}{\mathcal{M}}              
\newcommand{\ind}{\mathbf{1}}               
\newcommand{\Pam}{\mathcal{P}_{a,M}}
\newcommand{\Eul}{\Phi}                     
\newcommand{\dTV}{d_{\mathrm{TV}}}
\DeclareMathOperator{\Poisson}{Poisson}
\DeclareMathOperator{\Bor}{Bor}
\newcommand{\Ecal}{\mathcal{E}}             
\newcommand{\Fset}{E^{*}_{S}}

\theoremstyle{plain}
\newtheorem{theorem}{Theorem}[section]

\newtheorem{lemma}[theorem]{Lemma}
\newtheorem{proposition}[theorem]{Proposition}
\newtheorem*{conjecture}{Conjecture}

\theoremstyle{definition}
\newtheorem{definition}[theorem]{Definition}

\theoremstyle{remark}
\newtheorem*{remark}{Remark}

\begin{document}

\title{\bf Chowla Conjecture for the truncated M\"obius function over $\mathbb{F}_q[t]$}
\author{Abhirup Chatterjee$^{1,2}$}
	\address{$^{1}$The Institute of Mathematical Sciences, CIT Campus, Taramani, Chennai 600113, India}  

    \address{$^{2}$Homi Bhabha National Institute, Training School Complex, Anushakti Nagar, Mumbai 400094, India}

	\email[Abhirup Chatterjee]{abhirupc@imsc.res.in}
	
	\subjclass[2020]{11N37, 11T55, 11N36}
	\keywords{M\"obius function, Chowla's
conjecture, function fields, Erd\H{o}s--Kac theorem, Brun's sieve. }

\begin{abstract}
In this paper we give an analytic proof of the Two-point Chowla conjecture for the
truncated M\"obius function $\mu_M$ for the function field case in the fixed $q$ setting. We reformulate the problem into studying the distribution of $(\omega_M,\omega_M\circ T_a)$. Thus we develop a uniform bivariate
Erd\H{o}s-Kac theorem in terms of the characteristic function for small pair of real numbers $(u,v)$ via the Kubilius model.
For this purpose we also develop and employ the pure Brun sieve
for polynomials in this paper.
\end{abstract}
	\maketitle

\section{Introduction}
\label{sec:intro}

In problem~57 of the book \cite[p.~341]{ChowlaConj1965} Sarvadaman Chowla conjectured
the following.

\begin{conjecture}[Chowla's Conjecture for the Liouville function]
For any integer polynomial $F(x)$ that is not of the form $c\,g(x)^2$ for some
integer polynomial $g(x)$, one has
\[
\sum_{n\leq x} \lambda(F(n)) = o(x),
\]
where $\lambda$ is the Liouville function.
\end{conjecture}

When $F$ is linear, this is equivalent to the prime number
theorem on arithmetic progression. For polynomials of higher degree this is still open. The conjecture
has many different formulations and variations mostly in one of the two forms $F(*)=\prod_{i=1}^k(*\ +h_i)$(shift form) or $F(*)=\prod_{i=1}^k(a_i* +b_i)$(affine form)(see \cite{Ramare2018}). In the classical setup the most significant work has been in \cite{Matom_ki_2015},\cite{TAO_2016},\cite{Tao_2019} where they respectively prove the result for the shift form on average over the shifts \cite{Matom_ki_2015}, prove a 2-point logarithmic version in the affine form \cite{TAO_2016} and prove a k-point logarithmic version for odd k in the affine form \cite{Tao_2019} ; all of these results essentially rely on the breakthrough work of Matom\"aki and
Radziwi\l{}\l{} \cite{matomaki2017multiplicativefunctionsshortintervals}. Furthermore, the work of Mangerel in \cite{MANGEREL_2020} resolves the problem
for the truncated M\"obius function $\mu_y$, which, even though
\emph{`independent'} of the classical M\"obius function, is closely related to
it. In the function field setup Carmon and Rudnick solves the conjecture in shift form for $deg(f)$ fixed and $q\rightarrow\infty$ case (\cite{Carmon_Rudnick},\cite{Carmon_2015}). And Sawin and Shusterman resolves the conjecture in shift form for $q$ fixed (but large) and $deg(f)\rightarrow\infty$ case (\cite{sawin2019chowlatwinprimesconjectures}).\\
 The work of Mangerel \cite{MANGEREL_2020} in the classical setup is of great importance to us, as we employ the same
techniques used there to prove its
 counterpart for
the function field case. In particular, if $\Mon \subset \mathbb{F}_q[t] $ be the set of all monics, $f\in \mathbb{F}_q[t]$ and
$M\in \mathbb{N}$ then we define
\[
\omega(f):=\#\{p\in\Mon \mid p \text{ is irreducible and } p \text{ divides } f \},
\]
\[
\omega_M(f):=\#\{p\in\Mon \mid p \text{ is irreducible, } p \text{ divides } f
\text{ and } \deg p \le M \},
\]
\[
\mu(f):=
\begin{cases}
    (-1)^{\omega(f)} & \text{if $f$ is squarefree,} \\
    0  & \text{otherwise,}
\end{cases}
\]
\[
\mu_M(f):=\mu^2(f)(-1)^{\omega_M(f)}.
\]
It is not difficult to see that $\mu_M$ is essentially
independent of $\mu$ due to the pairity barrier. We
explain this in a more quantitative form in the appendix.

With these definitions in hand our main theorem is as follows.

\begin{theorem}
\label{main_thm}
Let $q\ne 2$ . Fix $a\in \Mon$. There is a constant $C$ such that, $\forall$ large $N$ and $M$ such that
\begin{equation}
\label{main_thm_range}
M\ \le\ \frac{N}{C\log N},
\end{equation}
we have
    \begin{equation}
    \label{main_thm_eq_1}
     \frac{1}{q^N}\sum_{\substack{\deg f\le N \\ f\in \Mon}} \mu_{M}(f)\mu_{M}(f+a)
     \ \ll_a\ \frac{1}{\log M}.
    \end{equation}
\end{theorem}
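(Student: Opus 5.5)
The plan follows Mangerel's strategy, transported to $\mathbb{F}_q[t]$. The first step is to strip off the squarefree factor $\mu^2(f)\mu^2(f+a)$. I will write $\mu^2=\sum_{d^2\mid f} \mu(d)$. The contribution of large $d$ (with $\deg d > D$ for a suitable $D$ of size about $\log N$, or a small multiple of $N$) is trivially $O(q^{-D})$. After this, the sum becomes
\[
\frac{1}{q^N}\sum_{\substack{\deg f\le N\\ f\in\Mon}} \mu^2(f)\mu^2(f+a)\,(-1)^{\omega_M(f)+\omega_M(f+a)},
\]
and it suffices to show that $(-1)^{\omega_M(f)+\omega_M(f+a)}$ has mean $O(1/\log M)$ against the weight $\mu^2(f)\mu^2(f+a)$. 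The first reduction handles the unweighted mean. I will expand $\mu^2(f)\mu^2(f+a)$ through the small divisors $d_1^2\mid f$, $d_2^2\mid f+a$, which are coprime up to divisors of $a$; each term is a sum over $f$ in a residue class modulo $d_1^2d_2^2$. Since $\omega_M$ changes by only a bounded amount when we restrict to a residue class with small modulus, this reduces to the unweighted problem uniformly over such classes.

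The core is then the bivariate characteristic function
\[
\Phi(u,v)=\frac{1}{q^N}\sum_{\deg f\le N} e\bigl(u\,\omega_M(f)+v\,\omega_M(f+a)\bigr),
\]
evaluated at $u=v=1/2$; this value is precisely the desired mean. I will compare $\Phi$ with the Kubilius model, in which the events $\{p\mid f\}$ for $\deg p\le M$ are replaced by independent Bernoulli variables $X_p, Y_p$ with $\Pr(X_p=1)=\Pr(Y_p=1)=q^{-\deg p}$, made mutually exclusive when $p\nmid a$ and coupled appropriately when $p\mid a$. The comparison uses the pure Brun sieve for polynomials developed in the paper. For a set $S$ of primes of degree at most $M$, the number of $f$ with $\deg f\le N$ having prescribed divisibility by each $p\in S$ for both $f$ and $f+a$ is given by an inclusion–exclusion truncated at level $2k$. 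The errors come from two sources. The first is the tail of the Bonferroni inequalities, which is controlled by $\bigl(\sum_{\deg p\le M} 2q^{-\deg p}\bigr)^{2k}/(2k)!\approx (2\log M)^{2k}/(2k)!$. The second is the moduli $\prod p$ of degree up to $2kM$. Whenever $2kM< N$ these moduli are exact, since for monic polynomials of degree $\le N$ the residues are equidistributed exactly. Choosing $k\asymp \log N$ together with the range $M\le N/(C\log N)$ makes both error sources $\ll N^{-1}$, which is far smaller than $1/\log M$. The exact counting of polynomials in residue classes is a convenient feature of the function field setting.

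In the model the characteristic function factorises over primes. For $p\nmid a$ each factor is
\[
1-\frac{2}{q^{\deg p}}+\frac{e(u)+e(v)}{q^{\deg p}},
\]
and at $u=v=1/2$ this equals $1-4q^{-\deg p}$. The product over $\deg p\le M$ is therefore
\[
\exp\Bigl(-4\sum_{\deg p\le M}q^{-\deg p}+O(1)\Bigr)\asymp_a M^{-4},
\]
by the prime polynomial theorem $\sum_{\deg p\le M}q^{-\deg p}=\log M+O(1)$. For the finitely many $p\mid a$ the factor contributes only $O_a(1)$. We need $q\ne2$ because when $q=2$ the primes $t$ and $t+1$ both have $1-4q^{-1}=-1$, and degree one primes distort the model. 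In fact, even for $q=3$ the factor $1-4/3<0$ for degree one primes is harmless, since only its absolute value matters and it is bounded. The model bound is thus $\ll M^{-4}$, which is much stronger than needed. The stated $1/\log M$ presumably comes from the squarefree reduction and from error terms in the uniform Erdős–Kac comparison, whose statement in the paper is for small $(u,v)$. If only a small-$(u,v)$ version is available, I will instead follow Mangerel: write $(-1)^{\omega}$ by Fourier inversion through the distribution of $\omega_M(f)-\omega_M(f+a)$ and $\omega_M(f)+\omega_M(f+a)$, which is asymptotically Gaussian with variance $\asymp\log M$. The local-limit equidistribution modulo $2$ then gives a parity bias of $O(1/\log M)$.

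The main obstacle is the uniformity of the sieve comparison in the range $M\le N/(C\log N)$, together with the accompanying choice of the truncation level $k$. The joint conditions for $f$ and $f+a$ must be handled, including the primes dividing $a$, and the combinatorial tail of the Brun sieve must be kept small while every modulus stays below $q^N$. I would first try $k=\lfloor N/(2M)\rfloor$, which the hypothesis guarantees satisfies $k\ge (C/2)\log N$; with that choice $(2\log M)^{2k}/(2k)!$ is negligible once $C$ is large.
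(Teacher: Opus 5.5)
Your architecture matches the paper's: compare the counting measure with an independent Bernoulli model via Brun's pure sieve, then evaluate at the parity point. Two of your choices differ.

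\textbf{Squarefree condition.} You expand $\mu^2(f)\mu^2(f+a)$ by M\"obius into classes modulo $d_1^2d_2^2$, on which divisibility by each $p\mid d_1d_2$ is deterministic. The paper instead computes the bivariate squarefree density $\rho$ (\cref{siftability}) and sieves inside $\Fset(N;a)$. Your route avoids that computation. It does require the sieve comparison uniformly over those classes, with errors proportional to $|d_1d_2|^{-2}$ so that they sum.

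\textbf{Direct evaluation of the model.} This is legitimate and better than what the paper does. By \cref{tv_char} the two characteristic functions differ by $\ll\dTV$ at every point, including the parity point. There the model equals $\prod_p(1-4\rho(p))$, and $|1-4\rho(p)|\le 1$ for $q\ge 3$, so the model value is $\ll M^{-4}$. The paper's $1/\log M$ comes from the term $\min\{1,\lambda_\rho(M)^{-1}\}$ in Roos's Poisson approximation and the Poisson--Gaussian step (\cref{roos}, \cref{Erdos_Kac}), not from the squarefree condition. Your route would therefore give $M^{-4}+M^{-\Delta}+N^{-1}$, and the Mangerel-style fallback is unnecessary.

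\textbf{The gap: the sieve comparison.} To control $\Phi$ you need every atom, meaning every pattern $k_1\mid f$, $k_2\mid f+a$ with all other primes of degree $\le M$ sieved out of $f(f+a)$.
\begin{itemize}
\item The inclusion--exclusion terms for such an atom have modulus $k_1k_2d$, not just $d$.
\item $\deg(k_1k_2)$ can be as large as $\sum_{\deg p\le M}\deg p\asymp q^M$, which is in general far beyond $N$.
\item So the condition $2kM<N$ does not make these moduli admissible, and for most patterns your Bonferroni count gives no estimate at all.
\end{itemize}
You must show separately that patterns with $\deg(k_1k_2)$ above the level (say $>\theta N/2$) have negligible total mass under both measures.

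The paper does this in \cref{sigma_nu_distance}. Rankin's trick with $\eta=1/M$ bounds the model mass of these patterns by $\exp\bigl(-\theta N\log q/(2M)+O(1)\bigr)$. Complementation then transfers the bound to the counting measure. That transfer works only because the admissible atoms carry \emph{relative} errors, proportional to $\rho(k_1k_2)$. An absolute error $N^{-1}$ per atom, which is how your ``both error sources $\ll N^{-1}$'' reads, is useless with $3^{\#\{p:\deg p\le M\}}$ atoms.

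With that step added, the argument goes through provided you also rechoose $k$. The degrees of $k_1k_2$, of the sieve divisors, and of your modulus $d_1^2d_2^2$ must together stay below $N$. Your proposed $k=\lfloor N/(2M)\rfloor$ already uses the entire budget, so a smaller level such as $k\asymp\theta N/M$ is needed.
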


We achieve this result by noticing the crucial observation that the absolute
value of the left-hand side of \Cref{main_thm_eq_1} is in essence the
characteristic function of the random vector $(\omega_M,\omega_M\circ T_a)$, where
$T_a(f):=f+a$. Thus our problem boils down to understanding the
distribution of $(\omega_M,\omega_M\circ T_a)$. This naturally leads us to
seek an Erd\H{o}s--Kac type theorem. More precisely, we prove the following in
this paper.

\begin{theorem}[restated and proved as \cref{Erdos_Kac} below]
\label{Erdos_Kac_intro}

    Let $u,v\in\mathbb{R}$ with $|u|,|v|\le \pi \sqrt{\lambda_\rho(M)}$ . Then for any $\Delta>0$ $\exists$ $C=C(\Delta,\theta,q)$ such that for $M\le N/(C\log N)$
    \[
    |\varphi^*_{N,S,a}(u,v;M)-\chi(u,v)|\ll e^{-\frac{1}{2}(u^2+v^2)}
    \min\left\{\!1,\frac{|u|^3+|v|^3}{\sqrt{\lambda_\rho(M)}}\right\}
    + \min\left\{\!1,\frac{1}{\lambda_\rho(M)}\right\} +M^{-\Delta}+\frac{1}{N}
    \]
    where $\chi(u,v)=e^{-\frac{1}{2}(u^2+v^2)}$ is the characteristic function
    of the bivariate standard normal distribution ,
    $\varphi^*_{N,S,a}(u,v;M)$ is the characteristic function of the
    normalised random vector $(\widetilde{\omega}_{M},
    \widetilde{\omega}_{M}\circ T_a)$ over `doubly' square-free monics of degree $\le N$ (to be defined precisely later).
\end{theorem}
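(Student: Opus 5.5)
The plan is the Kubilius-model approach combined with the pure Brun sieve for polynomials. Write $\omega_M(f)=\sum_{\deg p\le M}\ind_{p\mid f}$, and let $S$ be the set of irreducibles of degree $\le M$ (the set in $\varphi^*_{N,S,a}$). For $f$ ranging over doubly square-free monics of degree $\le N$, the pair $(\omega_M(f),\omega_M(f+a))$ is determined by the residue of $f$ modulo $P=\prod_{p\in S}p$. First I would truncate $S$: primes with $\deg p\le \rho$ or some small threshold are handled separately, and only primes up to $M$ are involved anyway. The idea is to compare the joint law of $(\ind_{p\mid f},\ind_{p\mid f+a})_{p\in S}$ with independent random variables $(X_p,Y_p)$. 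Here $(X_p,Y_p)$ takes the value $(1,0)$ with probability roughly $1/|p|$, $(0,1)$ with probability $1/|p|$, and $(0,0)$ otherwise, with $(1,1)$ possible only when $p\mid a$. These probabilities are adjusted for the square-free conditioning, i.e. divided by the local densities $1-1/|p|^2$ and similar factors. The variance of $\sum_p X_p$ is $\lambda_\rho(M)\approx \log M$, which is how $\lambda_\rho(M)$ enters.

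\textbf{Step 1 (sieve comparison).} For each pattern $(\epsilon_p,\delta_p)_{p\in S}$, I would estimate the number of doubly square-free $f$ of degree $\le N$ with that divisibility pattern. The method is the pure Brun sieve: expand the indicator of non-divisibility by inclusion–exclusion and truncate at an even or odd level $2k$. Using that $\prod_{p\in S}|p|\le q^{N}$ is false in general, the fundamental-lemma-type bound instead gives an error of order $(\sum_{p\in S}2/|p|)^{2k}/(2k)!+ q^{-N}\cdot(\text{number of moduli})$. The exact count of monics of degree $\le N$ in a residue class mod $D$ with $\deg D\le N$ is $q^{N-\deg D}$, which is a clean function field feature with no error term. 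Square-freeness of $f$ and $f+a$ is imposed by a further Möbius sum over $d^2$ for $\deg d\le N/2$, and the large $d$ are bounded trivially. The condition $M\le N/(C\log N)$ ensures that the moduli $D$ appearing, of degree at most $2kM$ with $k\asymp \Delta\log M$ or $k\asymp \log N$, stay below $N$. The Brun truncation error is then $\ll M^{-\Delta}$ because $\sum 1/|p|\sim\log M$ while $k$ is a large multiple of it. This yields a total variation bound
\[
\dTV\big((\omega_M,\omega_M\circ T_a),(Z_1,Z_2)\big)\ll M^{-\Delta}+\frac1N,
\]
where $Z_1=\sum X_p$ and $Z_2=\sum Y_p$. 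The characteristic functions then differ by the same amount.

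\textbf{Step 2 (model CLT).} The characteristic function of the normalised $(Z_1,Z_2)$ factors as $\prod_p \mathbb{E}\,e^{i(uX_p+vY_p)/\sqrt{\lambda}}$, after subtracting means. I would take logarithms and Taylor expand each factor to third order. The quadratic terms give $-(u^2+v^2)/2$ up to an $O(1/\lambda)$ correction. The cross covariance $\mathrm{Cov}(X_p,Y_p)=O(1/|p|^2)$ for $p\nmid a$, and the primes dividing $a$ are finitely many, so the total covariance is $O_a(1)$ and contributes $O(|uv|/\lambda)$. The cubic terms contribute $\ll (|u|^3+|v|^3)/\sqrt\lambda$. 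This is a standard Berry–Esseen-type characteristic function estimate. It gives the term $e^{-(u^2+v^2)/2}\min\{1,(|u|^3+|v|^3)/\sqrt\lambda\}$, plus $\min\{1,1/\lambda\}$ for the variance and normalisation mismatch. The range restriction $|u|,|v|\le\pi\sqrt\lambda$ is used for lower-bounding $|\mathbb{E}e^{i\cdot}|$ so that logarithms are legitimate. When the cubic term exceeds 1, I would use trivial bounds.

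\textbf{Main obstacle.} The main obstacle is Step 1: making the Brun sieve uniform over all $2^{2|S|}$ patterns simultaneously while also carrying the double square-free condition. I would handle this by sieving only the pair-statistic needed for the characteristic function. Concretely, I would write $e^{iu\omega_M(f)+iv\omega_M(f+a)}=\prod_p(1+(e^{iu}-1)\ind_{p\mid f})(1+(e^{iv}-1)\ind_{p\mid f+a})$. I would then expand, truncate using Bonferroni-type inequalities valid for complex weights of modulus $\le 2$, and compare term by term with the same expansion for the model. The truncation tail is bounded by $(4\sum_p 1/|p|)^{k}/k!\ll M^{-\Delta}$ once $k\ge C'\log M$.
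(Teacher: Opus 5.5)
Your final route, the one in your ``main obstacle'' paragraph combined with Step 2, is correct, but it differs from the paper's in both halves.

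\textbf{The paper's route.} It proves a total-variation statement. It partitions $\Fset(N;a)$ into the cells $E_{k_1,k_2}$. Using the product weights $\Psi^{\pm}$ and two applications of the pure Brun sieve, it gets $\nu_M(E_{k_1,k_2})=(1+O(M^{-\Delta}+1/N))\,\sigma_M(E_{k_1,k_2})$ for $|k_1k_2|\le q^{N\theta/2}$. It disposes of the larger cells by Rankin's trick (\cref{sigma_nu_distance}) and replaces the Bernoulli model by a Poisson pair via \cref{roos}. It then converts total variation into characteristic functions (\cref{tv_char}) and quotes Mangerel for Poisson versus Gaussian.

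\textbf{Your route.} You sieve the characteristic function itself. Put $x=u/\sqrt{\lambda_\rho(M)}$, $y=v/\sqrt{\lambda_\rho(M)}$, $z=e^{ix}-1$, $w=e^{iy}-1$. Then $e^{ix\omega_M(f)+iy\omega_M(f+a)}=\sum_{d_1\mid(f,\Pam),\,d_2\mid(f+a,\Pam)}z^{\omega(d_1)}w^{\omega(d_2)}$. Because $|1+z|=|1+w|=1$, truncating at $\omega(d_1d_2)<k$ costs at most the level-$k$ sum with weights $|z|^{\omega(d_1)}|w|^{\omega(d_2)}$. It is $|1+z|\le 1$, not $|z|,|w|\le 2$, that makes this complex Bonferroni bound valid. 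Consequently only moduli of degree $\le kM\le\theta N$ occur, and \cref{siftability} applies directly. Up to the same truncation error, the main term is $\prod_{p\mid\Pam}(1+\rho(p)(z+w))$. This gives $\sup_{u,v}|\varphi^*_{N,S,a}-\varphi_{\widetilde\Sigma_M}|\ll (4\lambda_\rho(M))^k/k!+(\text{negligible})\ll M^{-\Delta}$ with $k\asymp\log M$, under the same constraint $M\le N/(C\log N)$. Here $\widetilde\Sigma_M$ is $\Sigma_M$ normalised like $\widetilde\omega_M$. Step 2 then compares the product with $\chi$ by a direct cumulant expansion instead of through a Poisson vector.

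\textbf{What each buys.} Your route is shorter and self-contained: no cells, no $\Psi^{\pm}$, no Rankin tail, no Roos, no division by $1-\rho(p)$. However, it yields only characteristic-function closeness. The paper's route yields the stronger fact that the law of $(\omega_M,\omega_M\circ T_a)$ is close in total variation to the Kubilius model and to a Poisson pair.

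\textbf{Points to fix in your write-up.}
\begin{enumerate}
\item The total-variation claim at the end of Step 1 is not justified by the pattern-by-pattern count you describe. There the sieve moduli are $k_1d_1$ and $k_2d_2$, and $\deg(k_1k_2)$ is unbounded rather than $\le 2kM$. The cells beyond the level of distribution therefore need a separate tail bound, which is exactly what the paper's Rankin argument supplies. Simply drop that claim: the characteristic-function estimate is all that is used.
\item The range $|u|,|v|\le\pi\sqrt{\lambda_\rho(M)}$ does not by itself make logarithms legitimate. For $q=3$ and $\deg p=1$ one has $\rho(p)=2/7$, and $1+\rho(p)(z+w)$ vanishes at some $(u,v)$ inside that range. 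So the small primes genuinely must be split off, as you suggested. The real role of the range is to give $|\varphi_{\widetilde\Sigma_M}(u,v)|\le e^{-c(u^2+v^2)}$, which is needed when $|u|^3+|v|^3\ge\sqrt{\lambda_\rho(M)}$.
\item Your phrase ``the large $d$ are bounded trivially'' needs the right cut. With the symmetric cut $|e_1e_2|>q^L$, bound each tail count by its residue-class estimate; the $+1$ errors from classes of modulus degree $>N$ then already sum to $\gg q^N$. This is why \cref{siftable_1} needs its $T_2$ argument. Cutting instead on $\max_i\deg e_i>L$ and counting $f$ first with a divisor bound does work.
\end{enumerate}
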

\begin{remark}
    The $M^{-\Delta}$ and the upper bound on $M$ comes directly from the Pure Brun Sieve \cref{sieve_main_thm}. A better sieve will result in better error and range of $M$.
\end{remark}
This version of Erd\H{o}s--Kac is in terms of the characteristic function (as
that is what we need). One should note that even though a distribution-function
type Erd\H{o}s--Kac theorem (which should be more readily available and
\textbf{does not} require a truncation), in addition to L\'evy's continuity
theorem, gives us an Erd\H{o}s--Kac type result in terms of the characteristic function, such
an estimate will only be pointwise in nature. However, uniform continuity, at
least in the small region mentioned above, is
indispensable in
proving \cref{main_thm}. Thus we take the longer approach and employ the
\emph{Kubilius model} in proving \cref{Erdos_Kac}, similarly to what is done in
\cite{MANGEREL_2020}. Also, for the sake of completeness we have added Brun
sieve theory for polynomials in the appendix.

Now the process of proving the \emph{Bivariate Erd\H{o}s--Kac} theorem for function fields can be
tracked by the following schematic map.

\begin{figure}[h]
\centering
\begin{tikzcd}[column sep=6.0em, row sep=4.5em]
(\widetilde{\omega}_M, \widetilde{\omega}_M\circ T_a)
\arrow[r,"\text{Kubilius model}","\text{(close in measure)}"']
\arrow[d,"{\substack{\text{Corresponding}\\ \text{characteristic}\\ \text{function}}}"']
&
\Sigma_M
\arrow[r,"\text{Poisson approx.}","\text{(close in measure)}"']
&
Z
\arrow[d,"{\substack{\text{Corresponding}\\ \text{characteristic}\\ \text{function}}}"]
\\
\varphi_{(\widetilde{\omega}_M, \widetilde{\omega}_M\circ T_a)}
\arrow[rr,"\approx"']
&&
\varphi_Z
\arrow[r,"\text{Poisson}\approx\text{Normal}"]
&
\varphi_{\mathcal N(0,1)}
\end{tikzcd}
\caption{Schematic of the proof of the bivariate Erd\H{o}s--Kac theorem.}
\label{fig:schematic}
\end{figure}

\noindent Here $\Sigma_M$ is a sum of bivariate Bernoulli vectors required for
the Kubilius model, and $Z$ is its Poisson approximation.

\section{Notation and conventions}
\label{sec:Notation}

We take $q\ne 2$. In this paper, whenever we say ``\emph{a prime}'' we mean \emph{a monic
irreducible polynomial}, unless stated otherwise. Furthermore if $d\mid f$ and $f\in \Mon$, upto a multiplication by a unit, we consider $d\in \Mon$. We do the same for g.c.d. or l.c.m. 

\begin{center}
\begin{tabularx}{0.92\textwidth}{@{}l>{\raggedright\arraybackslash}X@{}}
\toprule
 Notation & Meaning \\
\midrule
 $\mathbb{F}_q$   &  A field with $q$ elements \\
 $\A:=\mathbb{F}_q[t]$   &  Polynomial ring over $\mathbb{F}_q$ \\
 $\Mon$   &  Set of monic polynomials \\
 $S$ & $\{f\in \Mon \mid \mu^2(f)=1\}$,i.e, set of square-free monics\\
 $|f|$ & $q^{\deg f}$ \\
 $\Eul(f)$ & Euler totient of $f$ \\
 $\tau(f)$ & Number of monic divisors of $f$ \\
  \enquote{ $\asymp$ }   & $\psi_1\asymp\psi_2 \implies$ $\psi_1 \ll \psi_2$ and $\psi_2 \ll \psi_1$\\
 $\ind_{X}$ & Indicator of the condition $X$ \\
 $\mathcal{P}_a$ \ \ (where $a\in \Mon$)  &
   $\{ p\in \Mon \mid p \text{ is a prime}, \ p\nmid a \}$ \\
 $\Pam$ \ \ (where $a\in \Mon$)  &
   $\prod\limits_{p\in\mathcal{P}_a,\ \deg p\le M}p$ \ (a \emph{polynomial}, not a set) \\
 $\theta$ & The level of distribution (\Cref{def:siftable}) \\
 $\lambda_\rho(M)$ & $\sum_{\deg p\le M}\rho(p)$ \\
 $T_a$ & The shift $T_a(f)=f+a$ \\
 $\gamma^{(r)},\gamma^{\pm}$ & Brun sieve weights (\Cref{sec:brun}) \\
 $\nu_M,\ \sigma_M$ & The counting measure and its Kubilius model
   (\Cref{sec:Mass}) \\
 $\varphi^*_{N,S,a}$ & Characteristic function of
   $(\widetilde\omega_M,\widetilde\omega_M\circ T_a)$ \\
 $\chi$ & Characteristic function of the bivariate standard normal \\
\bottomrule
\end{tabularx}
\end{center}

Throughout, $\log$ denotes the natural logarithm and $\log\log$ the iterated
logarithm.

We shall employ the usual conventions of probability theory. We denote by
$\mathbb{P}$ a fixed probability measure on a measurable space
$(\Omega,\mathcal{B})$ on which a given random vector
$X:\Omega\to\mathbb{R}^n$ is defined. If $V\subseteq\mathbb{R}^n$ is Borel
measurable and $X^{-1}(V)=:A\in\mathcal{B}$, then we will write
$\mathbb{P}(\{X\in V\})$ in place of $\mathbb{P}(A)$.

We let $\mathbb{E}X$ denote the expectation of $X$, i.e.
\[
\mathbb{E}X := \int_{\Omega} X\,d\mathbb{P}
= \int_{\mathbb{R}^n} t\,d\mathcal{L}(X)(t),
\]
where we have written $\mathcal{L}(X):=\mathbb{P}\circ X^{-1}$ to denote the
law of~$X$.

\section{Bivariate Siftability of squarefree polynomials}
\label{sec:Struc}

As mentioned earlier, we would like to develop a Kubilius model for our random
vector $(\widetilde{\omega}_{M},\widetilde{\omega}_{M}\circ T_a)$. For this we
require a bit of sieving to understand the measure of this random vector. In
particular we require the basic sieving
criteria to hold in the
bivariate polynomial case.

\begin{definition}
\label{def:siftable}
    Let $a\in \Mon$ and $\mathcal{A} \subset \Mon$. We say that $\mathcal{A}$ is \emph{siftable
    with respect to $a$} if:
    \begin{enumerate}[label=(\roman*)]
        \item We have
        \[
        \sum_{\substack{\deg f \le N \\ (f,a)=1}} \ind_\mathcal{A}(f)\ind_\mathcal{A}(f+a) \gg_a q^N;
        \]
        \item There exists a non-negative multiplicative function
        $\rho=\rho_{\mathcal{A},a}$ and a real number $\theta\in(0,1]$, called the
        \emph{level of distribution}, such that
        \begin{enumerate}[label=(\alph*)]
            \item $0\le \rho(p)\le \dfrac{1}{|p|}$ for each prime $p$;
            \item for each pair of coprime
            squarefree
            polynomials $k_1,k_2$ with $|k_1|,|k_2| <q^N$ we have
            \[
            \sum_{\substack{\deg f \le N \\ f\equiv 0 \pmod{k_1} \\
            f+a\equiv 0 \pmod{k_2}}} \ind_\mathcal{A}(f)\ind_\mathcal{A}(f+a)\ind_{(f,a)=1}
            \]
            \[
            \qquad = \rho (k_1k_2) \left(\sum_{\substack{\deg f \le N \\ (f,a)=1}}
            \ind_\mathcal{A}(f)\ind_\mathcal{A}(f+a)+ R_a(N;k_1,k_2) \right)
            \]
            such that the remainders $R_a(N;k_1,k_2)$ satisfy
            \[
            \sum_{\substack{|k_1k_2| \le q^{N\theta} \\ \mu^2(k_1k_2)=1}}
            \rho (k_1k_2) |R_a(N;k_1,k_2)|\ind_{(k_1k_2,a)=1}
            \ll \frac{q^N}{N^3}.
            \]
            
        \end{enumerate}
    \end{enumerate}
    \textbf{Moreover}, we say that $\mathcal{A}$ is \emph{siftable} if it is siftable
    with respect to each $a\in \Mon$ with the same density function and level
    of distribution.
\end{definition}

\begin{definition}
    We say that a siftable set $\mathcal{A}$ is \emph{regular} if
    $\lambda_\rho(M) := \sum_{\deg p\le M}\rho(p) \asymp \log M$ for all
    $M \in \mathbb{N}^{*}$.
\end{definition}

Our goal for this section is to prove the following.

\begin{theorem}
    \label{siftability}
    The set $S:=\{f\in \Mon \mid \mu^2(f)=1\}$ of squarefree polynomials is
    siftable and regular with the corresponding density function 
    \begin{align*}
\rho(m)
= \frac{1}{|m|}\prod_{p\mid m}\left(1-\frac{|p|-2}{|p|^2-2}\right),
\end{align*} 
and any fixed $\theta \in (0,\frac{1}{3})$ as the level of distribution.
\end{theorem}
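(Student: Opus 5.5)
We expand the squarefree indicators as $\mu^2(g)=\sum_{d^2\mid g}\mu(d)$ (sums over monic $d$), which turns every count into a count of monics in a residue class. Condition (i) and regularity are quick. For (i), the local density of pairs $(f,f+a)$ that are both squarefree and with $(f,a)=1$ is a convergent Euler product. At $p\mid a$ it is $(1-1/|p|)$. At $p\nmid a$ it is $(1-2/|p|^2)$, which is positive since $|p|\ge q\ge 3$; this is where $q\ne 2$ is used, because for $q=2$ the factor at $p=t$ or $t+1$ can vanish. So the main term is $\asymp_a q^N$, with the error controlled as in (ii) below. Regularity follows from $\rho(p)=\frac{1}{|p|}(1+O(1/|p|))$ together with the prime polynomial theorem $\#\{p:\deg p=n\}=q^n/n+O(q^{n/2}/n)$. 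These give $\lambda_\rho(M)=\sum_{n\le M}\frac1n+O(1)\asymp\log M$, and clearly $0\le\rho(p)\le 1/|p|$.

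For (ii), fix coprime squarefree $k_1,k_2$ coprime to $a$. We write
\[
\sum_{\substack{\deg f\le N\\ k_1\mid f,\ k_2\mid f+a\\ (f,a)=1}}\mu^2(f)\mu^2(f+a)=\sum_{e\mid a}\mu(e)\sum_{d_1,d_2}\mu(d_1)\mu(d_2)\,\#\{f: k_1\mid f,\ k_2\mid f+a,\ d_1^2\mid f,\ d_2^2\mid f+a,\ e\mid f\}.
\]
Only $(d_1,d_2)$ with $\gcd(d_1,d_2)\mid a$ contribute, and the congruences are compatible only under coprimality conditions involving $a$. When the system is solvable it defines a single class modulo $L=\operatorname{lcm}(k_1,d_1^2,e)\operatorname{lcm}(k_2,d_2^2)$. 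If $\deg L\le N$, the count is exactly $q^{N+1-\deg L}$ with \emph{no error}, since monics of degree $\le N$ are equidistributed modulo $L$. This exactness is the function field advantage.

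We split the $(d_1,d_2)$ sum at $\deg d_i\le D$ for a parameter $D$ to be chosen.
\begin{enumerate}[label=(\alph*)]
\item For small $d_i$, with $2D+\deg k_1k_2+\deg a\le N$, the terms are exact. Completing the sum over all $d_i$, the main term factors as a product of local densities: $\rho(k_1k_2)$ times the main term $X$ for $k_1=k_2=1$. At $p\mid k_1$ the local factor is $(1/|p|)(1-1/|p|^2)$, to be compared with $1-2/|p|^2$. The ratio is $\frac{1}{|p|}\cdot\frac{|p|^2-1}{|p|^2-2}=\frac1{|p|}\bigl(1-\frac{|p|-2}{|p|^2-2}\bigr)$ up to normalisation, and one checks it matches the stated $\rho$. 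The tail of the completed sum is $\ll q^N|k_1k_2|^{-1}q^{-D}$.
\item For large $d_i$, say $\deg d_1>D$, we bound trivially by $\sum_{\deg d>D}\#\{f: d^2\mid f,\ \deg f\le N\}$. When $\deg d^2\le N$ this is $\ll q^N\sum_{\deg d>D}|d|^{-2}\ll q^{N-D}$, and without the $1/|k_1k_2|$ saving it is $\ll q^{N-D}/|k_1k_2|^{1/2}$ after using $k_1\mid f$. When $2\deg d>N$ we use the standard bound that such $f$ number $\ll q^{N/2}$ up to small factors. One can bound this more carefully by counting $f=d^2g$ with $\deg g<N-2\deg d$, summed over $d$, which gives $\ll Nq^{N/2}$.
\end{enumerate}

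This yields $|R_a(N;k_1,k_2)|\ll q^{N-D}+|k_1k_2|\,Nq^{N/2}$. Here we must be careful that $R$ is defined after dividing by $\rho(k_1k_2)\approx|k_1k_2|^{-1}$, which multiplies the large-$d$ error by $|k_1k_2|$. Weighting by $\rho(k_1k_2)$ and summing over $|k_1k_2|\le q^{N\theta}$ restores the factor $1/|k_1k_2|$, and the number of pairs is $\ll q^{N\theta}\tau$-factors, i.e.\ $\ll q^{N\theta}N$. The total is $\ll q^{N\theta}N\,(q^{N-D}+Nq^{N/2})$, up to sharper bookkeeping.

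The main obstacle is tuning $D$ and the bound for $f$ divisible by a large square so that this total is $\ll q^N/N^3$. The crude estimate above needs $\theta<1/2$ and $D\ge N\theta+4\log_qN$, while the exactness in (a) needs $2D\le N(1-\theta)-\deg a$. These conditions together force $\theta<1/3$, which explains the stated range $\theta\in(0,\tfrac13)$. I would therefore choose $D=\tfrac{N(1-\theta)}{2}-\deg a$ and verify that $q^{N\theta-D}N^{O(1)}$ and $q^{N\theta+N/2}\cdot\text{poly}(N)/\ldots$ are both admissible once we work with the large-square count relative to $|k_1k_2|$, i.e.\ count $f\equiv0\bmod k_1d^2$ directly. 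This is exactly the step where the constraint $\theta<1/3$ must come out.
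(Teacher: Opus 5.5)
Your overall route is the paper's: expand $\mu^2$ as $\sum_{e^2\mid g}\mu(e)$, solve the congruences by CRT, evaluate the small-divisor main term as an Euler product, bound the large-square terms crudely, and sum an error that is uniform in $(k_1,k_2)$ over the $\ll Nq^{N\theta}$ pairs with $|k_1k_2|\le q^{N\theta}$. However, the step you single out as the function-field advantage is false, and your choice of $D$ depends on it.

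Monics of degree $\le N$ are not equidistributed modulo $L$. By \Cref{count_classes} a residue class contains $\mathcal{Q}_N/|L|+\vartheta$ of them with $|\vartheta|\le 1$, and $\vartheta\neq 0$ in general. For example, for $L=t$ the class of $1$ contains the constant $1$ and has one element more than the class of $0$. Exactness holds for monics of exact degree $n\ge\deg L$, but summing over $n\le N$ brings back an $O(1)$ error per class. Two further points: the main term is $\mathcal{Q}_N/|L|$ rather than $q^{N+1-\deg L}$; and with $\deg d_i\le D$ for each $i$, $\deg L$ can reach $4D+\deg(k_1k_2)+\deg a$, so your stated range $2D+\deg(k_1k_2)+\deg a\le N$ would not cover region (a) anyway.

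Hence region (a) carries an error of size up to $\tau(a)q^{2D}$ per pair $(k_1,k_2)$, which your bookkeeping omits. With $D=\frac{N(1-\theta)}{2}-\deg a$ this is $\asymp_a q^{N(1-\theta)}$. Summing over the pairs then gives only $O(Nq^N)$, not $O(q^N/N^3)$, so the final verification fails as written.

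The repair is the paper's: give up exactness and balance the error sources. Per pair you have:
\begin{enumerate}[label=(\roman*)]
\item $\ll\tau(a)q^{2D}$ from the $O(1)$ counting errors;
\item $\ll q^{N-D}$ from the tail of the completed series;
\item $\ll N^{3}q^{N-D}$ from the large squares.
\end{enumerate}
For (iii), bounding termwise leaves a sum over the other divisor. You must control it, either by collapsing it into $\mu^2(f+a)\in\{0,1\}$ when that variable is unrestricted, or, where both are constrained, by the paper's count of solutions of $ge_1^2\equiv-a\pmod{e_2^2}$, which gives $\nu_{g,e_2}\ll_a\tau(e_2)$ via Hensel. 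Taking $D=\lfloor N/3\rfloor$ gives $N^{O(1)}q^{2N/3}$ uniformly in $k_1,k_2$; the paper splits at $|e_1e_2|\le q^{L}$ with $L=[2N/3]$. The total over pairs is then $N^{O(1)}q^{N(2/3+\theta)}\ll q^N/N^3$ for every fixed $\theta<\tfrac13$. So the exponent $\tfrac13$ comes from balancing $q^{2D}$ against $q^{N-D}$, not from an exactness range.

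There are also two smaller slips. First, at $p\mid k_1$ (so $p\nmid a$) the local density is $\frac{1}{|p|}\bigl(1-\frac{1}{|p|}\bigr)$, since one needs $p\| f$ while $p\nmid f+a$ is automatic. The ratio to $1-2/|p|^2$ is then exactly $\frac{|p|-1}{|p|^2-2}=\rho(p)$. Your ratio $\frac{1}{|p|}\cdot\frac{|p|^2-1}{|p|^2-2}$ is not $\rho(p)$ under any normalisation. Second, $q\neq 2$ plays no role here, since $1-2/|p|^2=\tfrac12>0$ when $|p|=2$. It is needed later, because then $\rho(p)=\tfrac12$ and $1-2\rho(p)=0$ in the Kubilius model.
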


Write
\[
\mathcal{Q}_N:=\#\{f\in\Mon:\deg f\le N\}=\frac{q^{N+1}-1}{q-1},
\]
and for each $h\in \Mon$ let
\begin{align*}
    E_h(N)&:= \mathcal{Q}_N\prod_{p \nmid h}(1-2|p|^{-2}), \\
    E(N)&:=E_1(N).
    \end{align*}
    \begin{lemma}
\label{count_classes}
    Let $J\in\Mon$ and $r\in\A$ with $\deg r<\deg J$. For each $n\ge0$, the
    number of $f\in\Mon$ of degree $n$ with $f\equiv r\pmod{J}$ is exactly
    $q^{n-\deg J}$ if $n\ge\deg J$, and at most $1$ if $n<\deg J$. Consequently
    \[
    \#\{f\in\Mon:\deg f\le N,\ f\equiv r \!\!\pmod{J}\}
    =\frac{\mathcal{Q}_N}{|J|}+\vartheta,\qquad |\vartheta|\le1 .
    \]
\end{lemma}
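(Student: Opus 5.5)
The plan is to exploit the fact that the residue map on monics of a fixed degree is a bijection onto an affine space of polynomials. Fix $n\ge\deg J$ and write $d=\deg J$. A monic $f$ of degree $n$ can be written uniquely by division with remainder as $f=gJ+s$, where $\deg s<d$ and $g$ is monic of degree $n-d$. Here $g$ is monic because $J$ is monic and the leading coefficient of $f$ is $1$.

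The map $f\mapsto (g,s)$ is therefore a bijection between monics of degree $n$ and pairs consisting of a monic $g$ of degree $n-d$ and an arbitrary $s$ with $\deg s<d$. The condition $f\equiv r\pmod J$ forces $s=r$, since $\deg r<d$ and remainders are unique. So the count equals the number of monic $g$ of degree $n-d$, which is $q^{n-d}$.

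For $n<d$, two monics $f,f'$ of degree $n$ with $f\equiv f'\equiv r$ would have $J\mid f-f'$ with $\deg(f-f')<d$. This forces $f=f'$, so there is at most one such $f$; explicitly, it exists iff $r$ itself is monic of degree $n$.

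For the consequence, sum over $0\le n\le N$. If $N<d$, the count is $\mathcal{Q}_N/|J|+\vartheta$, and we must check $|\vartheta|\le1$. The total count is at most $1$: all $f$ with $\deg f<d$ in the class equal $r$, so at most one in total, namely $r$ if $r$ is monic. Moreover $\mathcal{Q}_N/|J|\le \mathcal{Q}_{d-1}/q^{d}<1/(q-1)\cdot q^d/q^d\le 1$, so the difference lies in $[-1,1]$.

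If $N\ge d$, the degrees $n<d$ contribute $\epsilon\in\{0,1\}$ in total, by the same argument ($f=r$ is the only candidate). The degrees $n\ge d$ contribute
\[
\sum_{n=d}^{N}q^{n-d}=\frac{q^{N-d+1}-1}{q-1}.
\]
Compare this with
\[
\frac{\mathcal{Q}_N}{|J|}=\frac{q^{N+1}-1}{(q-1)q^{d}}=\frac{q^{N-d+1}-q^{-d}}{q-1}.
\]
The difference is $(q^{-d}-1)/(q-1)\in(-1/(q-1),0]$. Hence
\[
\vartheta=\epsilon+\frac{q^{-d}-1}{q-1}\in\Bigl(-\tfrac{1}{q-1},\,1\Bigr],
\]
so $|\vartheta|\le1$.

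No step is a real obstacle. The only care needed is the bookkeeping showing that the low-degree terms contribute at most one element in total, not one per degree. The uniqueness argument above, that any such $f$ must equal $r$, handles this.
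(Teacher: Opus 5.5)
Your proof is correct and follows essentially the same route as the paper. You write $f=gJ+r$ with $g$ monic of degree $n-\deg J$, note that the only possible element of the class of degree $<\deg J$ is $r$ itself, and compare $\sum_{n=\deg J}^{N}q^{n-\deg J}$ with $\mathcal{Q}_N/|J|$ to get the discrepancy $(q^{-\deg J}-1)/(q-1)$, treating $\deg J>N$ separately as the paper does; your slightly sharper range $\vartheta\in(-1/(q-1),1]$ is immaterial.
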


\begin{proof}
    If $f\equiv r\pmod J$ and $\deg f\ge\deg J$, then $f=r+gJ$ with $g\ne0$
    and $\deg f=\deg g+\deg J$; since $J$ is monic, $f$ is monic exactly when
    $g$ is. This gives $q^{n-\deg J}$ polynomials of each degree
    $n\ge\deg J$. A polynomial $f\equiv r$ of degree $<\deg J$ must equal $r$,
    so there is at most one of those. If $\deg J\le N$ the total is therefore
    $\ind_{r\in\Mon}+\sum_{j=0}^{N-\deg J}q^j
    =\ind_{r\in\Mon}+\frac{q^{N-\deg J+1}-1}{q-1}$, and
    \[
    \frac{q^{N-\deg J+1}-1}{q-1}-\frac{\mathcal{Q}_N}{|J|}
    =\frac{q^{-\deg J}-1}{q-1}\in(-1,0],
    \]
    so $|\vartheta|\le1$. If $\deg J>N$ the total is $0$ or $1$, while
    $0<\frac{\mathcal{Q}_N}{|J|}\le\frac{q^{N+1}-1}{(q-1)q^{N+1}}<1$; again
    $|\vartheta|\le1$.
\end{proof}
\begin{lemma}
\label{siftable_1}
    Let $a,h \in \Mon$, $c\in\mathbb{A}$ be such that $|c|\le |h|$ and $(h,c),(h,c+a)$ are both
    squarefree. Then
    \begin{multline*}
        \sum_{\substack{\deg f\le N \\ f\in \Mon \\ f\equiv c \pmod{h} }}
        \mu^2(f)\mu^2(f+a)
        = \frac{E_h(N)}{|h|}\cdot
        \frac{\Eul((h,c))\,\Eul((h,c+a))}{|(h,c)|\,|(h,c+a)|} \\
        \times\prod_{\substack{p^2\| a \\ p\mid h\implies p\mid c+a}}
        \left(1+\frac{(1-\ind_{p\mid h}/|p|)^{-1}}{|p|^2 -1} \right)
        + O_{a,c,h,q}\!\left(N^3q^{\frac{2N}{3}}\right)
    \end{multline*}
    
\end{lemma}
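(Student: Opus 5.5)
We follow the standard route of writing $\mu^2(g)=\sum_{d^2\mid g}\mu(d)$ for both $f$ and $f+a$. This turns the left-hand side into
\[
\sum_{d_1,d_2\in\Mon}\mu(d_1)\mu(d_2)\,\#\{f\in\Mon:\deg f\le N,\ f\equiv c \ (h),\ d_1^2\mid f,\ d_2^2\mid f+a\}.
\]
We then split at a parameter $D$ (to be chosen as $D\approx N/3$), according to whether $\deg d_1,\deg d_2\le D$ or not.

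\textbf{Small moduli.} For $\deg d_i\le D$ the three congruences are compatible only under conditions on the gcd's. Any common prime $p\mid (d_1,d_2)$ must satisfy $p^2\mid a$. A prime $p\mid (d_1,h)$ forces compatibility of $f\equiv c$ with $p^2\mid f$. Since $(h,c)$ is squarefree, the latter amounts to $p\mid c$ together with $p\,\|\, h$-type compatibility, and symmetrically for $d_2$ and $c+a$. When compatible, the system is a single congruence modulo $J=\operatorname{lcm}(h,d_1^2,d_2^2)$. \Cref{count_classes} then gives $\mathcal{Q}_N/|J|+O(1)$. The $O(1)$ terms contribute $O(q^{2D})$.

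The main term is $\frac{\mathcal{Q}_N}{|h|}\sum_{d_1,d_2}\mu(d_1)\mu(d_2)\,\kappa(d_1,d_2)\,|h|/|J|$, where $\kappa$ is the compatibility indicator. This is a multiplicative function of the pair, so we complete the sum to all $d_1,d_2$. The tail with $\deg d_1>D$ costs $\ll \mathcal{Q}_N q^{-D}\cdot N$, using $\sum_{\deg d=k}|d|^{-2}\asymp q^{-k}$. We then evaluate the completed sum as an Euler product, local factor by local factor:
\begin{itemize}
\item For $p\nmid ha$ the local factor is $1-2|p|^{-2}$, which produces $E_h(N)$ up to the primes dividing $h$.
\item For $p\mid h$ with $p\mid c$, the condition $p^2\mid f$ contributes $-|p|^{-1}$ relative to $|h|$, giving $1-1/|p|=\Eul(p)/|p|$. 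This yields $\Eul((h,c))/|(h,c)|$, and similarly $\Eul((h,c+a))/|(h,c+a)|$.
\item For $p^2\mid a$, the pair $d_1=d_2=p$ is compatible, which adds an extra $+|p|^{-2}$. Rewriting relative to the normalisation $1-2|p|^{-2}$ (or $1-|p|^{-1}$ if $p\mid h$) gives exactly the correction factor $1+\frac{(1-\ind_{p\mid h}/|p|)^{-1}}{|p|^2-1}$, present only when compatibility with $c$ modulo $h$ holds, i.e.\ $p\mid h\Rightarrow p\mid c+a$.
\end{itemize}
We must also track the exact hypothesis $p^2\| a$ versus $p^3\mid a$. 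This is a finite bookkeeping check over the primes dividing $ah$, absorbed into the $O_{a,c,h}$ constant otherwise.

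\textbf{Large moduli.} If $\deg d_1>D$, we bound the count by the number of $f$ of degree $\le N$ divisible by $d_1^2$. Summing over $d_1$, this is $\ll \sum_{k>D}q^k\,(q^{N-2k}+1)$, which is dangerous for $k$ near $N/2$: the "$+1$" term gives $q^{N/2}$ many terms. Here we use the function-field trick. We write $f=d_1^2 e$ and count pairs $(d_1,e)$ with $\deg d_1=k$, $\deg e=N-2k$. Splitting $k\le N/3$ versus $k>N/3$, the pairs are counted via whichever of $d_1,e$ has smaller degree. This gives $\ll \sum_{k>D}\min(q^{N-k},\,q^{k}\cdot q^{N-2k})\cdot N$. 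We then refine further using $d_2^2\mid f+a$ simultaneously, via the count of $d_2$ given $f$. The case where both $d_1$ and $d_2$ are large is handled by counting $f$ with $\tau(f)\tau(f+a)\ll q^{\epsilon N}$ in the crude form $\ll N^2$ factors of degree bookkeeping. This is where the $N^3$ arises.

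Choosing $D=N/3$ balances $q^{2D}$ against $q^{N-D}$ and gives $N^3q^{2N/3}$. \textbf{The main obstacle} is this large-moduli estimate: getting exponent $2N/3$ rather than $N/2+\epsilon$ or worse, uniformly. I would first try the simple bound $\#\{f:\ d^2\mid f\}$ with the $e$-parametrisation above, and accept the losses of powers of $N$ from summing over degrees.
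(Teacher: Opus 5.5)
Your small-moduli analysis and the choice $D=N/3$ are fine; they play the role of the paper's split at $|e_1e_2|\le q^{L}$ with $L=[2N/3]$. Evaluating the completed main term as an Euler product is a legitimate, cleaner substitute for the paper's reparametrisation through $\delta,d_1,d_2,m_1,m_2$.

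The genuine gap is the large-moduli range. You flag it as the main obstacle but do not resolve it.
\begin{itemize}
\item Bounding $N_a(N;d_1,d_2,c,h)$ by $\#\{f: d_1^2\mid f\}$ discards the only constraint that controls the sum over $d_2$. Your $\min(q^{N-k},q^{k}\cdot q^{N-2k})$ has both entries equal to $q^{N-k}$, so no case distinction actually takes place.
\item The repair via ``the count of $d_2$ given $f$'' amounts to bounding $\sum_{\deg d_1=k}\sum_{\deg e\le N-2k}2^{s(d_1^2e+a)}$, where $s(F)=\#\{p:p^2\mid F\}$. Since $2^{s(F)}$ is not bounded by any power of $N$, the divisor bound only yields $O(q^{(2/3+\epsilon)N})$, weaker than the stated $O(N^3q^{2N/3})$.
\item The phrase ``$N^2$ factors of degree bookkeeping'' does not describe an actual estimate. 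Fixing degrees costs $N^2$, but the per-degree count still needs an argument.
\item The dangerous ``$+1$'' terms are not those for a single modulus $d_1^2$, which total only $q^{N/2}$. They are those in the joint CRT count: $\sum_{\deg d_1=k,\,\deg d_2=l}(q^{N-2k-2l}+1)$ contains $q^{k+l}$, which is as large as $q^N$ when $k+l$ is near $N$.
\end{itemize}

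The missing idea, which is the heart of the paper's bound for $T_2$, is to reverse the roles of the variables.
\begin{itemize}
\item Put $M_{k,l}:=\sum_{\deg d_1=k,\ \deg d_2=l}N_a(N;d_1,d_2,c,h)$ with $k\ge l$; there are only $O(N^2)$ degree pairs. Write $f=gd_1^2$ with $\deg g\le N-2k$.
\item For fixed $(g,d_2)$, count $d_1$ rather than $d_2$. Here $d_1$ must lie in one of the $\nu_{g,d_2}$ classes $y \bmod d_2^2$ with $gy^2\equiv -a \pmod{d_2^2}$. By CRT and Hensel, $\nu_{g,d_2}\ll_a\tau(d_2)$, and each class contains at most $1+q^{k-2l}$ monics of degree $k$. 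Hence
\[
M_{k,l}\ll(1+q^{k-2l})\,q^{N-2k}\sum_{\deg d_2=l}\tau(d_2)\ll N\bigl(q^{N-2k+l}+q^{N-k-l}\bigr)\ll Nq^{N-k}.
\]
\item The divisor-type loss now falls on $d_2$ and is averaged over $d_2$, costing only a factor $l+1$, instead of being incurred pointwise in $f$. With your split, $k=\max(\deg d_1,\deg d_2)>N/3$, and summing over degree pairs gives $O(N^3q^{2N/3})$.
\end{itemize}

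Separately, do not claim that your local computation reproduces the displayed constant exactly. At $p\nmid h$ with $p^2\mid a$, your Euler factor is $1-|p|^{-2}=(1-2|p|^{-2})\bigl(1+\frac{1}{|p|^2-2}\bigr)$. The case $h=1$, $c=0$ is just the classical density of $f$ with $f$ and $f+a$ both squarefree. This factor applies to all $p$ with $p^2\mid a$, not only $p^2\,\|\,a$. The paper arrives at $\frac{1}{|p|^2-1}$ through a slip in evaluating $\Sigma_\delta$: it drops the condition $(n_1,\delta)=1$ forced by $\mu(n_1\delta)\neq0$. A discrepancy in a main-term constant cannot be ``absorbed into the $O_{a,c,h}$ constant'' of an $O(N^3q^{2N/3})$ error. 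It is harmless downstream only because it changes the constant $E(N;a)$ and not the density $\rho$.
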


\begin{proof}
Recall that for any $g\in \Mon$ we have
$\mu^2(g)=\sum_{e^2\mid g}\mu(e)$. Thus
\begin{align*}
    E(N;a,c,h):&= \sum_{\substack{f\in\Mon \\ \deg f\le N \\
    f\equiv c \pmod{h}}} \mu^2(f)\mu^2(f+a)
    = \sum_{\substack{f\in\Mon \\ \deg f\le N \\
    f\equiv c \pmod{h}}} \ \sum_{e_1^2\mid f,\ e_2^2\mid f+a}\mu(e_1)\mu(e_2) \\
    &= \sum_{|e_1|,|e_2|\le q^{\frac{N}{2}}}\mu(e_1)\mu(e_2)N_a(N;e_1,e_2,c,h),
\end{align*}
where
\[
N_a(N;e_1,e_2,c,h) := \#\{ f\in \Mon \mid \deg f \le N,\ e_1^2\mid f,\
e_2^2\mid f+a,\ f\equiv c \pmod{h}\}.
\]

Let $1\le L\le \frac{N}{2}$ be a parameter to be chosen later. Then
\begin{align*}
     E(N;a,c,h) & = \sum_{|e_1e_2|\le q^L}\mu(e_1)\mu(e_2)N_a(N;e_1,e_2,c,h)
     + \sum_{|e_1e_2|> q^L}\mu(e_1)\mu(e_2)N_a(N;e_1,e_2,c,h) \\
     &=:T_1 + T_2 \quad \text{(respectively).}
\end{align*}
First note that $N_a(N;e_1,e_2,c,h)$ counts the simultaneous solutions to the system of congruences $f\equiv c \pmod{h}$, $f\equiv 0 \pmod{e_1^2}$,
$f\equiv -a \pmod{e_2^2}$. If $J=[h,e_1^2,e_2^2]$, then by the Chinese
Remainder Theorem a solution exists if and only if
\begin{equation}
(h,e_1^2)\mid c, \qquad (h,e_2^2)\mid c+a, \qquad (e_1^2,e_2^2)\mid a .
\end{equation}
When these conditions hold the solutions form a single residue class modulo
$J$. Hence by \cref{count_classes}
\begin{equation}
\label{N_a_count}
N_a(N;e_1,e_2,c,h) = \frac{\mathcal{Q}_N}{|[h,e_1^2,e_2^2]|}
\ind_{(h,e_1^2)\mid c}\,\ind_{(h,e_2^2)\mid c+a}\,\ind_{(e_1^2,e_2^2)\mid a}
+\vartheta,\qquad |\vartheta|\le1,
\end{equation}
But these errors $\vartheta$ only contribute
\begin{align*}
  \left|\sum_{|e_1e_2|\le q^L}\mu(e_1)\mu(e_2)\vartheta \right|& \le \sum_{|e_1e_2|\le q^L}1=\sum_{j\le L}\sum_{\deg(g)=j}\tau(g)\\
  &=\sum_{j\le L}(j+1)q^j\ll Lq^L
\end{align*}
to $T_1$.
Now
\[
[h,e_1^2,e_2^2] = \frac{h[e_1^2,e_2^2]}{(h,[e_1^2,e_2^2])}
=\frac{h}{\left(h,\left(\frac{e_1e_2}{(e_1,e_2)} \right)^2\right)}
\left(\frac{e_1e_2}{(e_1,e_2)} \right)^2 ,
\]
so
\[
T_1= \frac{\mathcal{Q}_N}{|h|}\sum_{\substack{|e_1e_2| \le q^L \\
(h,e_1^2)\mid c,\ (h,e_2^2)\mid c+a \\ (e_1^2,e_2^2)\mid a }}
\frac{\mu(e_1)\mu(e_2)}{|e_1e_2|^2}
\left|(e_1,e_2)^2\left(h,\left(\frac{e_1e_2}{(e_1,e_2)} \right)^2\right)\right|+O(Lq^L) 
\]
Next we split the sum according to the size of
\begin{equation*}
\label{condition_star}
\delta=(e_1,e_2), \qquad d_1=(h,e_1^2), \qquad d_2=(h,(e_2/\delta)^2). \tag{$*$}
\end{equation*}

Note that $d_1d_2=\left(h,\left(\frac{e_1e_2}{(e_1,e_2)}\right)^2\right)$.
Furthermore, if $\mu(e_2)\ne 0$ then $(e_2/\delta,\delta)=1$, so
$(h,e_2^2)=(h,(e_2/\delta)^2)(h,\delta ^2)=d_2(h,\delta^2)$. Hence
$(h,e_2^2)\mid c+a$ if and only if $(h,\delta^2)\mid c+a$ and
$d_2\mid \frac{c+a}{(h,\delta ^2)}$. Therefore
\begin{align*}
    T_1 &= \frac{\mathcal{Q}_N}{|h|}\sum_{\substack{\delta^2\mid a \\ (h,\delta^2)\mid c+a}}
    |\delta|^2\sum_{d_1\mid(h,c)}\ \sum_{d_2\mid\frac{(h,c+a)}{(h,\delta^2)}}|d_1d_2|
    \sum_{\substack{|e_1e_2|\le q^L\\ \text{(\ref{condition_star})} }}
    \frac{\mu(e_1)\mu(e_2)}{|e_1e_2|^2} + O(Lq^L) \\
    &=  \frac{\mathcal{Q}_N}{|h|}\sum_{\substack{\delta^2\mid a \\ (h,\delta^2)\mid c+a}}
    |\delta|^2\sum_{d_1\mid(h,c)}\ \sum_{d_2\mid\frac{(h,c+a)}{(h,\delta^2)}}|d_1d_2|
    \sum_{\text{(\ref{condition_star})} }\frac{\mu(e_1)\mu(e_2)}{|e_1e_2|^2}
    + O_{a,c,h}\!\left(Lq^{N-L}\right) + O(Lq^L)\\
    & =: \mathcal{I} + \mathcal{E},
\end{align*}
where we obtained $O_{a,c,h}\!\left(Lq^{N-L}\right)$ in $\mathcal{E}:=O_{a,c,h}(Lq^{N-L}) + O(Lq^L)$ by estimating the tail of
the series in $\mathcal{I}$ via
\begin{align*}
    &\frac{\mathcal{Q}_N}{|h|}\sum_{\substack{\delta^2\mid a \\ (h,\delta^2)\mid c+a}}
    |\delta|^2\sum_{d_1\mid(h,c)}\ \sum_{d_2\mid\frac{(h,c+a)}{(h,\delta^2)}}|d_1d_2|
    \sum_{\substack{|e_1e_2|> q^L\\ \text{(\ref{condition_star})} }}
    \frac{\mu(e_1)\mu(e_2)}{|e_1e_2|^2} \\
    &\qquad \ll_{c,a,h} q^N\sum_{\substack{|g|>q^L \\ g\in\Mon}}
    \frac{\tau(g)}{|g|^2} \qquad (\text{taking } e_1e_2=g) \\
    &\qquad =q^N\sum_{m>L}\ \sum_{\substack{\deg g=m \\ g\in\Mon}}
    \frac{\tau(g)}{q^{2m}}
    \le q^N\sum_{m>L}\ \frac{(m+1)q^m}{q^{2m}}
    \ll q^N\sum_{m>L}mq^{-m} \ll Lq^{N-L}.
\end{align*}

We now estimate $\mathcal{I}$. To do so we reparametrise the sum defining it.
Fix $d_1,d_2,e_1,e_2,\delta$. Note that $(h,c)$ is squarefree and
$d_1=(e_1^2,h)\mid(h,c)$, so $d_1$ is squarefree, i.e.\ $d_1=(e_1,h)$. Also, if
$\widetilde{e}_2 :=e_2/\delta$, then
$(\widetilde{e}_2^{\,2},h)=d_2\mid\frac{(h,c+a)}{(h,\delta^2)}$, so $d_2$ is
squarefree, i.e.\ $d_2=(\widetilde{e}_2,h)$. Thus write $e_1=d_1m_1$ and
$e_2=d_2m_2$. Note that $(m_1m_2,h)=1$: indeed
$(e_1,\widetilde{e}_2)=1\implies (d_1,d_2)=1$, and
$(e_1/d_1,h)=1=(\widetilde{e}_2/d_2,h)$
, whence
\[
1=\left(\tfrac{e_1\widetilde{e}_2}{d_1d_2},h\right)
 =\left(\tfrac{e_1e_2}{d_1d_2},h\right)=(m_1m_2,h)
\]
(since $e_1\widetilde{e}_2$ and $e_1e_2$ have the same prime factors and
$(d_1,d_2)=1$). Furthermore,
$\delta=(e_1,e_2)=(e_1,d_2m_2)=(e_1,m_2)$ (since $d_2\mid\widetilde{e}_2$,
which is coprime to $e_1$) $=(d_1m_1,m_2)$. But $d_1=(h,e_1)\mid c$,
$(h,e_2)\mid c+a$ and $(c,c+a)=1$; thus $(d_1,\delta)=1$, which means
$\delta=(m_1,m_2)$. Finally, note that $(m_2/\delta,\delta)=(\delta,d_2)=1$ for
all $m_2$ contributing non-trivially to $\mathcal{I}$, as otherwise we would
have $\mu(e_2)=\mu(\delta d_2 (m_2/\delta))=0$. As such,
\begin{align*}
    \mathcal{I}&=\frac{\mathcal{Q}_N}{|h|}
    \sum_{\substack{\delta^2\mid a \\ (h,\delta^2)\mid c+a}}|\delta|^2
    \sum_{d_1\mid(h,c)}\ \sum_{d_2\mid\frac{(h,c+a)}{(h,\delta^2)}}|d_1d_2|
    \sum_{\substack{(m_1,h)=1=(m_2/\delta,h\delta) \\ (m_1,m_2)=\delta} }
    \frac{\mu(d_1m_1)\mu(d_2m_2)}{|d_1m_1d_2m_2|^2} \\
    &=\frac{\mathcal{Q}_N}{|h|}\sum_{\substack{\delta^2\mid a \\ (h,\delta^2)\mid c+a}}
    |\delta|^2\sum_{d_1\mid(h,c)}\frac{\mu(d_1)}{|d_1|}
    \sum_{d_2\mid\frac{(h,c+a)}{(h,\delta^2)}}
    \frac{\mu(d_2)}{|d_2|}
    \sum_{\substack{(m_1,h)=1=(m_2/\delta,h\delta) \\ (m_1,m_2)=\delta} }
    \frac{\mu(m_1)\mu(m_2)}{|m_1m_2|^2}.
\end{align*}

Call the innermost sum $\Sigma_\delta$. Since $(m_1,m_2)=\delta$, write
$m_1=n_1\delta$ and $m_2=n_2\delta$. Then
\begin{align*}
   \Sigma_\delta & := \sum_{\substack{(m_1,h)=1=(m_2/\delta,h\delta) \\
   (m_1,m_2)=\delta} }\frac{\mu(m_1)\mu(m_2)}{|m_1m_2|^2} \\
    &= \frac{\mu(\delta)^2}{|\delta|^4}\sum_{(n_1,h)=1}\frac{\mu(n_1)}{|n_1|^2}
    \sum_{(n_2,h\delta n_1)=1}\frac{\mu(n_2)}{|n_2|^2} \\
    &= \frac{\mu(\delta)^2}{|\delta|^4}\sum_{(n_1,h)=1}\frac{\mu(n_1)}{|n_1|^2}
    \prod_{p\nmid n_1h\delta}\left(1-\frac{1}{|p|^2}\right) \\
    &= \frac{\mu(\delta)^2}{|\delta|^4}
    \prod_{p\mid\delta}\left(1-\frac{1}{|p|^2}\right)^{-1}
    \prod_{p\nmid h}\left(1-\frac{1}{|p|^2}\right)
    \sum_{(n_1,h)=1}\frac{\mu(n_1)}{|n_1|^2}
    \prod_{p\mid n_1}\left(1-\frac{1}{|p|^2}\right)^{-1} \\
    &= \frac{\mu(\delta)^2}{|\delta|^4}
    \prod_{p\mid\delta}\left(1-\frac{1}{|p|^2}\right)^{-1}
    \prod_{p\nmid h}\left(1-\frac{2}{|p|^2}\right).
\end{align*}
Inserting this back into $\mathcal{I}$ we get
\begin{align*}
    \mathcal{I} &= \frac{\mathcal{Q}_N}{|h|}
    \sum_{\substack{\delta^2\mid a \\ (h,\delta^2)\mid c+a}}|\delta|^2
    \sum_{d_1\mid(h,c)}\frac{\mu(d_1)}{|d_1|}
    \sum_{d_2\mid\frac{(h,c+a)}{(h,\delta^2)}}\frac{\mu(d_2)}{|d_2|}
    \cdot\frac{\mu(\delta)^2}{|\delta|^4}
    \prod_{p\mid\delta}\left(1-\frac{1}{|p|^2}\right)^{-1}
    \prod_{p\nmid h}\left(1-\frac{2}{|p|^2}\right) \\
&=\frac{\mathcal{Q}_N}{|h|}\prod_{p\nmid h}\left(1-\frac{2}{|p|^2}\right)
\sum_{\substack{\delta^2\mid a \\ (h,\delta^2)\mid c+a}}
\frac{\mu(\delta)^2}{|\delta|^2}
\prod_{p\mid\delta}\left(1-\frac{1}{|p|^2}\right)^{-1}
\sum_{d_1\mid(h,c)}\frac{\mu(d_1)}{|d_1|}
\sum_{d_2\mid\frac{(h,c+a)}{(h,\delta^2)}}\frac{\mu(d_2)}{|d_2|} \\
&= \frac{E_h(N)}{|h|}\sum_{\substack{\delta^2\mid a \\ (h,\delta^2)\mid c+a}}
\frac{\mu(\delta)^2}{|\delta|^2}
\prod_{p\mid\delta}\left(1-\frac{1}{|p|^2}\right)^{-1}
\prod_{p\mid(h,c)}\left(1-\frac{1}{|p|}\right)
\prod_{p\mid\frac{(h,c+a)}{(h,\delta^2)}}\left(1-\frac{1}{|p|}\right) \\
&= \frac{E_h(N)}{|h|}\cdot
\frac{\Eul((h,c))\,\Eul((h,c+a))}{|(h,c)|\,|(h,c+a)|}
\sum_{\substack{\delta^2\mid a \\ (h,\delta^2)\mid c+a}}
\frac{\mu(\delta)^2}{|\delta|^2}
\prod_{p\mid\delta}\left(1-\frac{1}{|p|^2}\right)^{-1}
\prod_{p\mid\delta}\left(1-\frac{\ind_{p\mid h}}{|p|}\right)^{-1} \\
&= \frac{E_h(N)}{|h|}\cdot
\frac{\Eul((h,c))\,\Eul((h,c+a))}{|(h,c)|\,|(h,c+a)|}
\prod_{\substack{p^2\| a \\ p\mid h\implies p\mid(h,c+a)}}
\left[1+\frac{\left(1-\frac{\ind_{p\mid h}}{|p|}\right)^{-1}}{|p|^2-1}\right].
\end{align*}

Thus $T_1$ becomes
\begin{equation}
    T_1= \frac{E_h(N)}{|h|}\cdot
    \frac{\Eul((h,c))\,\Eul((h,c+a))}{|(h,c)|\,|(h,c+a)|}
    \!\!\!\!\!\!\!\!\prod_{\substack{p^2\| a \\ p\mid h\implies p\mid(h,c+a)}}
    \left[1+\frac{\left(1-\frac{\ind_{p\mid h}}{|p|}\right)^{-1}}{|p|^2-1}\right]
    + O_{a,c,h}\!\left(Lq^{N-L}+Lq^L\right) \label{T_1}
\end{equation}

We now start estimating $T_2$. For each $k,l\le N/2$ define
\[
M_{k,l}(N) := \sum _{\substack{|e_1|=q^k \\ |e_2| =q^l}} N_a(N;e_1,e_2,c,h).
\]
Then
\begin{equation}
  \begin{aligned}
    \label{T_2ineq}
    T_2 &= \sum_{\substack{ |e_1|,|e_2|\le q^{N/2} \\ |e_1e_2|> q^L}}
    \mu(e_1)\mu(e_2)N_a(N;e_1,e_2,c,h)  \\
    & \ll \sum_{\substack{k,l\le N/2 \\ k+l >L}}
    \sum_{\substack{|e_1|=q^k \\ |e_2|=q^l}}N_a(N;e_1,e_2,c,h)
    \ \le \ M_{k',l'}(N)N^2 ,
  \end{aligned}
\end{equation}
where $(k',l')$ is the pair that maximises $M_{k,l}$. Assume without loss of
generality that $k'\ge l'$. It follows from the definition that
 \[
 N_a(N;e_1,e_2,c,h) \le \sum_{\deg f\le N}
 \ind_{f\equiv 0\!\!\pmod{e_1^2}}\ \ind_{f+a\equiv 0\!\!\pmod{e_2^2}}.
 \]
 Writing $f=ge_1^2$ we have $|g|\le |f|/|e_1^2|=q^{N-2k'}$; therefore
 $\deg g\le N-2k'$ and $ge_1^2\equiv-a\pmod{e_2^2}$. Thus
 \[
 M_{k',l'}(N)\le \sum_{\substack{\deg e_2=l' \\ \deg g \le N-2k' }}
 \ \sum_{\substack{\deg e_1=k' \\ ge_1^2\equiv-a\!\!\pmod{e_2^2}}} 1 ,
 \]
 where the inner sum can be split in terms of residue classes mod $e_2^2$:
 \begin{align*}
 \sum_{\substack{\deg e_1=k' \\ ge_1^2\equiv-a\!\!\pmod{e_2^2}}} 1
 &= \sum_{\substack{|b|<|e_2|^2 \\ gb^2\equiv-a\!\!\pmod{e_2^2}}}
 \ \sum_{\substack{\deg e_1=k' \\ e_1\equiv b\!\!\pmod{e_2^2}}} 1
 = \sum_{\substack{|b|<|e_2|^2 \\ gb^2\equiv-a\!\!\pmod{e_2^2}}}
 \max\{q^{k'-2l'},1\}\\
 &= \nu_{g,e_2}\max\{q^{k'-2l'},1\}\le \left(1+q^{k'-2l'}\right)\nu_{g,e_2},
 \end{align*}
 
 where for each $e_2$ and $g$ with $\deg e_2=l'$ and $\deg g\le N-2k'$,
 $\nu_{g,e_2}$ denotes the number of solutions
 modulo $e_2^2$ to the
 congruence
 \[
 g y^2 +a \equiv 0 \pmod{e_2^2}.
 \]
 By the Chinese Remainder Theorem we have
 $\nu_{g,e_2}=\prod_{p\mid e_2}\nu_{g,p}$. Now for each $p\mid e_2$ with
 $p\nmid g$, the congruence $gy^2 +a \equiv 0 \pmod {p}$ has at most $2$
 solutions, and any one such solution extends uniquely to a solution of
 $gy^2+a \equiv0 \pmod{p^2}$ by Hensel's Lemma. Thus $\nu_{g,p}\le 2$ for all
 $p\mid e_2$ with $p\nmid g$. On the other hand, if $p\mid g$ then
 $\nu_{g,p}=0$ unless $p\mid a$, which is possible for at most
 $\omega(a)=O_a(1)$ primes $p$. For each such $p$, any $y$ modulo $p$ is a
 solution to $gy^2+a \equiv0 \pmod{p^2}$; in particular, for any
 $p\mid (a,g,e_2)$ we have $\nu_{g,p}\le |a|^2$. Therefore
 \[
 \nu_{g,e_2}\le |a|^{2\omega((a,g,e_2))}\prod_{\substack{p\mid e_2 \\ p\nmid g}}
 \nu_{g,p} \ \ll_a \prod_{\substack{p\mid e_2 \\ p\nmid g}}\nu_{g,p}
 \le 2^{\omega(e_2)} \le \tau (e_2)
 \]
 uniformly for all $|g|\le q^{N-2k'}$. Consequently
 \begin{align*}
     M_{k',l'}(N) &\le \sum_{\substack{\deg e_2=l' \\ \deg g \le N-2k' }}\!\!\!\!\!
     \left(1+q^{k'-2l'}\right)\nu_{g,e_2}
     \ \ll \  \left(1+q^{k'-2l'}\right)\sum_{\deg e_2=l'}\tau(e_2) \!\!\!\!\sum_{\deg g\le N-2k'}\!\!\!\!1 \\
     & \ll \left(1+q^{k'-2l'}\right) q^{N-2k'}\sum_{\deg e_2=l'}\tau(e_2)
     \ = \ (l'+1)\left( q^{N-2k'+l'}+q^{N-(k'+l')}\right) \\
     & \le N\left(q^{N-k'}+q^{N-L}\right) \qquad \text{[since $k'+l'>L$].}\\
     &\le N\left(q^{N-\frac{L}{2}}+q^{N-L}\right) \qquad \text{[since $2k'\ge k'+l'>L$].}\\
     &\ll Nq^{N-\frac{L}{2}}
 \end{align*}
Thus
 \[
 T_2 \le M_{k',l'}(N)N^2\ll_{a,q} N^3q^{N-\frac{L}{2}}.
 \]
 Thus, combining our estimates
 for $T_1$ and $T_2$, we get
 \begin{multline*}
        \sum_{\substack{\deg f\le N \\ f\in \Mon \\ f\equiv c \pmod{h} }}
        \!\!\!\!\!\mu^2(f)\mu^2(f+a)= \frac{E_h(N)}{|h|}\cdot
        \frac{\Eul((h,c))\,\Eul((h,c+a))}{|(h,c)|\,|(h,c+a)|} \\
        \times\prod_{\substack{p^2\| a \\ p\mid h\implies p\mid c+a}}
        \left(1+\frac{(1-\ind_{p\mid h}/|p|)^{-1}}{|p|^2 -1} \right)
        + O_{a,c,h,q}\!\left(Lq^L+Lq^{N-L}+N^3q^{N-\frac{L}{2}}\right).
\end{multline*}
Taking $L=[2N/3]$ we get $Lq^L+Lq^{N-L}+N^3q^{N-\frac{L}{2}} \ll_q N^3q^{\frac{2N}{3}}$, which
gives the result.
\end{proof}

\begin{theorem}
\label{siftable_thm}

    Let $k_1,k_2 \in \Mon$ be such that $k_1,k_2$ are squarefree and coprime
    to~$a$. Then
    \begin{multline}
         \sum_{\substack{\deg f\le N \\ f\in \Mon \\ f\equiv 0 \pmod{k_1} \\
         f+a\equiv 0\pmod{k_2} }}\!\!\!\!\!\!\!\!\mu^2(f)\mu^2(f+a)
         = \frac{\mathcal{Q}_N}{|[k_1,k_2]|}\cdot\frac{\Eul([k_1,k_2])}{|[k_1,k_2]|}
         \prod_{p\nmid [k_1,k_2]}\left(1-\frac{2}{|p|^2}\right)
         \prod_{p^2\| a}\left(1+\frac{1}{|p|^2-1}\right) \\
         + O\!\left(N^3q^{\frac{2N}{3}}\right).
    \end{multline}
Consequently, if $(k_1,k_2)=1$ then
\begin{multline}
\label{siftable_3}
    \sum_{\substack{\deg f\le N \\ f\in \Mon \\ f\equiv 0 \pmod{k_1} \\
    f+a\equiv 0\pmod{k_2} }}\!\!\!\!\!\!\!\!\mu^2(f)\mu^2(f+a)\ind_{(f,a)=1}
    = E(N;a) \frac{\Eul(k_1)\Eul(k_2)}{|k_1k_2|^2}
    \prod_{p \mid k_1k_2}\left(1-\frac{2}{|p|^2}\right)^{-1} \\
    +O\!\left(N^3q^{\frac{2N}{3}}\right),
\end{multline}
where
$E(N;a) := E(N)\prod_{p\mid a}\left(1-\frac{|p|-1}{|p|^2-2}\right)
\prod_{p^2\| a}\left(1+\frac{1}{|p|^2-1}\right)$.
\end{theorem}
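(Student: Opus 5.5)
The plan is to reduce the first formula to \cref{siftable_1} with a single modulus, and then obtain \cref{siftable_3} from the first formula by Möbius inversion over the divisors of $a$.

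\textbf{Step 1: the first formula.} Since $k_1,k_2$ are squarefree and coprime to $a$, any $f$ with $k_1\mid f$ and $k_2\mid f+a$ satisfies
\[
(k_1,k_2)\mid a .
\]
Together with the coprimality to $a$ this forces $(k_1,k_2)=1$; otherwise the sum is empty. I will treat this degenerate case separately or note it; the main case is $(k_1,k_2)=1$. Then, by the Chinese Remainder Theorem, the two congruences are equivalent to a single congruence $f\equiv c\pmod h$ with
\[
h=[k_1,k_2]=k_1k_2,\qquad c\equiv 0\ (k_1),\qquad c\equiv -a\ (k_2),\qquad \deg c<\deg h .
\]
We have $(h,c)=k_1$ and $(h,c+a)=k_2$, because $(k_2,a)=1$ gives $k_2\nmid c$ primewise, and symmetrically. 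Both are squarefree, so \cref{siftable_1} applies.

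\textbf{Step 2: simplify the output of \cref{siftable_1}.} The factor $\Eul(k_1)\Eul(k_2)/|k_1k_2|$ equals $\Eul([k_1,k_2])/|[k_1,k_2]|$. The quantity $E_h(N)/|h|$ gives $\mathcal{Q}_N|h|^{-1}\prod_{p\nmid h}(1-2|p|^{-2})$. In the final product, every $p^2\| a$ has $p\nmid h$, since $h$ is coprime to $a$. Hence the indicator vanishes and the condition ``$p\mid h\Rightarrow p\mid c+a$'' is vacuous, leaving $\prod_{p^2\|a}(1+\frac{1}{|p|^2-1})$.

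The one genuine subtlety is the error term. \cref{siftable_1} gives $O_{a,c,h,q}(N^3q^{2N/3})$, but here we need uniformity in $h$. I will re-inspect that proof: the implied constants depended on $h,c$ only through the factor $\sum_\delta\sum_{d_1}\sum_{d_2}$ in the tail estimate and through $\nu_{g,e_2}$. The $1/|h|$ prefactor dominates the divisor sums $\tau(h)\ll_\epsilon|h|^\epsilon$, so these are bounded uniformly (or absorbed). I expect this uniformity check to be the main obstacle. If it fails, I would settle for an error with mild $\tau(h)$ dependence, which is harmless for the later level-of-distribution sum.

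\textbf{Step 3: \cref{siftable_3}.} Insert the condition $(f,a)=1$ via
\[
\ind_{(f,a)=1}=\sum_{d\mid (f,a)}\mu(d).
\]
Only squarefree $d\mid a$ contribute, and $d$ is coprime to $k_1k_2$. The condition $d\mid f$ merges with $k_1\mid f$, so apply the first formula with $k_1$ replaced by $dk_1$. This is allowed even though $dk_1$ is not coprime to $a$: rerun Step 1 with $(h,c)=dk_1$ and $(h,c+a)=k_2$, both still squarefree. The correction product now sees $p\mid d$ with $p^2\|a$, where $p\mid h$ but $p\nmid c+a$ (since $p\mid c$ and $p\nmid a/p\cdots$). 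I will check carefully whether such $p$ are excluded by the product condition. This bookkeeping is a second delicate point: it is exactly what produces the local factor at $p\mid a$ in $E(N;a)$.

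Summing over $d$ with the error multiplied by $\tau(a)=O_a(1)$ then gives a product over $p\mid a$ of
\[
\Bigl(1-\frac{1}{|p|}\Bigl(1-\frac1{|p|}\Bigr)\Bigl(1-\frac2{|p|^2}\Bigr)^{-1}\Bigr)=1-\frac{|p|-1}{|p|^2-2}.
\]
Finally, rewrite
\[
\frac{\Eul(k_1k_2)}{|k_1k_2|^2}\prod_{p\nmid k_1k_2}\Bigl(1-\frac{2}{|p|^2}\Bigr)=\frac{\Eul(k_1)\Eul(k_2)}{|k_1k_2|^2}\prod_{p\mid k_1k_2}\Bigl(1-\frac2{|p|^2}\Bigr)^{-1}\prod_p\Bigl(1-\frac{2}{|p|^2}\Bigr),
\]
and combine with $\mathcal{Q}_N$ to get $E(N)$, hence $E(N;a)$.
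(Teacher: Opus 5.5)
Your Steps 1--2 follow the paper's reduction: CRT to a single class modulo $h=[k_1,k_2]$, then \cref{siftable_1} with $(h,c)=k_1$ and $(h,c+a)=k_2$. Your observation that $(k_1,k_2)\mid a$ forces $(k_1,k_2)=1$ is correct, and the paper misses it, because its CRT step assumes the system is solvable. For $(k_1,k_2)\neq 1$ the sum is empty, while the stated main term is $\gg q^N\Eul([k_1,k_2])/|[k_1,k_2]|^2$. So for fixed such $k_1,k_2$ the first formula is false; it must be restricted to $(k_1,k_2)=1$, and this case cannot be ``treated separately''.

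The uniformity in $h$ you flag does hold. In the proof of \cref{siftable_1}, each tail term is at most $|a|\mathcal{Q}_N/|e_1e_2|^2$, since $d_1d_2\mid h$ and $\delta^2\mid a$. Also $T_2$ is bounded after discarding the congruence modulo $h$. Hence the error is $O_{a,q}(N^3q^{2N/3})$ uniformly in $h$ and $c$.

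The genuine gap is Step 3.
\begin{itemize}
\item For $p\mid d$ you have $p\mid c$ and $p\mid a$, hence $p\mid c+a$. So $(h,c+a)=dk_2$, not $k_2$, and your assertion $p\nmid c+a$ is false.
\item With $h=dk_1k_2$, the gcds $(h,c)=dk_1$ and $(h,c+a)=dk_2$ share $d$, so $(c,c+a)\neq 1$. The proof of \cref{siftable_1} needs $(c,c+a)=1$ to get $(d_1,\delta)=1$.
\item Even read at face value, the lemma would produce a factor $(\Eul(d)/|d|)^2$ and a changed correction at $p\mid d$, $p^2\| a$. It would not give the local factor $\frac{1}{|p|}(1-\frac{1}{|p|})$ that your product $1-\frac{|p|-1}{|p|^2-2}$ presupposes.
\end{itemize}
That local factor is what one gets by plugging $ek_1$ into the first formula, which is exactly the paper's step. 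But that formula was derived using $(k_1,a)=1$, both to get $([k_1,k_2],c+a)=k_2/(k_1,k_2)$ and to drop the condition in the correction product. So the paper's derivation of \cref{siftable_3} has the same hole.

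The target constant is in fact wrong. At $p\mid d$, the local density of $\{p\mid f,\ p^2\nmid f,\ p^2\nmid f+a\}$ is $\frac{1}{|p|}(1-\frac{2}{|p|})$ if $p\| a$, and $\frac{1}{|p|}(1-\frac{1}{|p|})$ if $p^2\mid a$. The alternating sum over $d$ therefore gives just $1-\frac{1}{|p|}$ at every $p\mid a$. This is also clear directly: $p\nmid f$ forces $p\nmid f+a$. So the true statement is \cref{siftable_3} with $E(N;a)$ replaced by $\mathcal{Q}_N\prod_{p\nmid a}(1-\frac{2}{|p|^2})\prod_{p\mid a}(1-\frac{1}{|p|})$. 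For example, for $a$ prime the stated constant has $1-\frac{|p|-1}{|p|^2-2}$ where $1-\frac{|p|-2}{|p|^2-2}$ is correct. The error is harmless for \cref{siftability}, which only needs the shape $\rho(k_1k_2)$ times a constant independent of $k_1,k_2$.

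A clean proof of the corrected statement avoids M\"obius inversion over $d\mid a$:
\begin{itemize}
\item Let $r$ be the product of the distinct primes dividing $a$, and split $f$ into the $\Eul(r)$ classes modulo $r$ coprime to $r$.
\item Apply \cref{siftable_1} with $h=k_1k_2r$. Now $(h,c)=k_1$, $(h,c+a)=k_2$ and $(c,c+a)=1$.
\item The correction product is empty, because each $p\mid a$ divides $h$ but not $c+a$.
\item Summing over the classes gives $\rho(k_1k_2)\,\mathcal{Q}_N\prod_{p\nmid a}(1-\frac{2}{|p|^2})\prod_{p\mid a}(1-\frac{1}{|p|})+O_a(N^3q^{2N/3})$.
\end{itemize}
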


\begin{proof}
    By the Chinese Remainder Theorem the simultaneous congruences
    $f\equiv0 \pmod{k_1}$ and $f\equiv-a \pmod{k_2}$ are equivalent to
    $f \equiv c \pmod{[k_1,k_2]}$ for a suitable $c$ modulo $[k_1,k_2]$
    depending on~$a$. Thus by \cref{siftable_1},
 \begin{align}
 \label{siftable_2}
      \sum_{\substack{\deg f\le N \\ f\in \Mon \\ f\equiv 0 \pmod{k_1} \\
      f+a\equiv 0\pmod{k_2} }}\mu^2(f)\mu^2(f+a)
      &=  \sum_{\substack{\deg f\le N \\ f\in \Mon \\
      f\equiv c \pmod{[k_1,k_2]} }}\mu^2(f)\mu^2(f+a) \notag \\
      &= \frac{E_{[k_1,k_2]}(N)}{|[k_1,k_2]|}\cdot
      \frac{\Eul(([k_1,k_2],c))\,\Eul(([k_1,k_2],c+a))}
           {|([k_1,k_2],c)|\,|([k_1,k_2],c+a)|} \notag \\
      &\qquad \times \prod_{p^2\| a}
      \left(1+\frac{(1-\ind_{p\mid [k_1,k_2]}/|p|)^{-1}}{|p|^2 -1} \right) \notag \\
      &\qquad\qquad + O\!\left(N^3q^{\frac{2N}{3}}\right),
 \end{align}
 
 where the condition $p\mid[k_1,k_2]\implies p\mid c+a$ is omitted because
 $([k_1,k_2],a)=1$. Note that we can write $c\equiv -amk_1 \pmod{[k_1,k_2]}$,
 where $m$ is a residue class modulo $k_2/(k_1,k_2)$ such that
 $mk_1\equiv 1 \pmod{k_2/(k_1,k_2)}$. Therefore
 \begin{align*}
 ([k_1,k_2],c)&= \left( \frac{k_1k_2}{(k_1,k_2)},amk_1 \right)
 = k_1\left( \frac{k_2}{(k_1,k_2)},am \right)\\
 &=k_1\left( \frac{k_2}{(k_1,k_2)},a \right)
 \qquad \text{[since $(m,k_2/(k_1,k_2))=1$]} \\
 &=k_1 \qquad \text{[since $(a,k_2)=1$].}
  \end{align*}
 Similarly,
 \begin{align*}
     c+a&= h[k_1,k_2] +a(1-mk_1) \\
        &= h[k_1,k_2] +ag\frac{k_2}{(k_1,k_2)}
        =\frac{k_2}{(k_1,k_2)}(hk_1 + ag),
 \end{align*}
 where $1-mk_1=gk_2/(k_1,k_2)$, which also implies that $(k_1,g)=1$. Moreover,
 \begin{align*}
 \left([k_1,k_2],c+a\right) &= \frac{k_2}{(k_1,k_2)}\left(k_1, hk_1+ag\right)
 =\frac{k_2}{(k_1,k_2)}\left(k_1,ag\right)\\
 &=\frac{k_2}{(k_1,k_2)} \qquad \text{[since $(k_1,g)=1=(k_1,a)$].}
 \end{align*}
 Thus we can rewrite \Cref{siftable_2} as
 \begin{align*}
     &\frac{\mathcal{Q}_N}{|[k_1,k_2]|}\cdot
     \frac{\Eul(k_1)\Eul\!\left(\frac{k_2}{(k_1,k_2)}\right)}
          {\left|k_1\frac{k_2}{(k_1,k_2)}\right|}
     \prod_{p\nmid [k_1,k_2]}\left(1-\frac{2}{|p|^2}\right)
     \prod_{p^2\| a}\left(1+\frac{1}{|p|^2-1}\right)
     + O\!\left(N^3q^{\frac{2N}{3}}\right) \\
     &\qquad = \mathcal{Q}_N\frac{\Eul([k_1,k_2])}{|[k_1,k_2]|^2}
     \prod_{p\nmid [k_1,k_2]}\left(1-\frac{2}{|p|^2}\right)
     \prod_{p^2\| a}\left(1+\frac{1}{|p|^2-1}\right)
     + O\!\left(N^3q^{\frac{2N}{3}}\right),
 \end{align*}
 which settles the first part of the theorem.

 Now assume that $k_1$ and $k_2$ are coprime. Then
 \begin{align*}
    &  \sum_{\substack{\deg f\le N \\ f\in \Mon \\ f\equiv 0 \pmod{k_1} \\
    f+a\equiv 0\pmod{k_2} }}\mu^2(f)\mu^2(f+a)\ind_{(f,a)=1}
    =  \sum_{\substack{\deg f\le N \\ f\in \Mon \\ f\equiv 0 \pmod{k_1} \\
    f+a\equiv 0\pmod{k_2} }}\mu^2(f)\mu^2(f+a)\sum_{e\mid(f,a)}\mu(e) \\
      &\qquad = \sum_{e\mid a}\mu(e)\!\!\!\!\!\sum_{\substack{\deg f\le N \\ f\in \Mon \\
      f\equiv 0 \pmod{ek_1} \\ f+a\equiv 0\pmod{k_2} }}\mu^2(f)\mu^2(f+a) \\
      &\qquad = \sum_{e\mid a}\mu(e) \frac{\mathcal{Q}_N\Eul(ek_1k_2)}{|[ek_1,k_2]|^2}
      \prod_{p\nmid [ek_1,k_2]}\left(1-\frac{2}{|p|^2}\right)
      \prod_{p^2\| a}\left(1+\frac{1}{|p|^2-1}\right)
      + O\!\left(N^3q^{\frac{2N}{3}}\right)\\
      &\qquad =  \left(\sum_{e\mid a}\mu(e)\frac{\Eul(e)}{|e|^2}
      \prod_{p\mid e}\left(1-\frac{2}{|p|^2}\right)^{-1}\right)
      \frac{\mathcal{Q}_N\Eul(k_1k_2)}{|k_1k_2|^2}
      \prod_{p\nmid k_1k_2}\!\!\left(1-\frac{2}{|p|^2}\right)\!\!
      \prod_{p^2\| a}\!\!\left(1+\frac{1}{|p|^2-1}\right) \\
      &\qquad\qquad +  O\!\left(N^3q^{\frac{2N}{3}}\right) \\
      &\qquad = \prod_{p\mid a}\!\!\left(1-\frac{|p|-1}{|p|^2-2}\right)\!
      \frac{\mathcal{Q}_N\Eul(k_1k_2)}{|k_1k_2|^2}\!
      \prod_{p\nmid k_1k_2}\!\!\left(1-\frac{2}{|p|^2}\right)\!\!
      \prod_{p^2\| a}\!\!\left(1+\frac{1}{|p|^2-1}\right)\!\!
      +  O\!\left(N^3q^{\frac{2N}{3}}\right) \\
      &\qquad = \mathcal{Q}_N\!\prod_p \!\!\left(1-\frac{2}{|p|^2}\right)\!\!
      \prod_{p\mid a}\!\!\left(1-\frac{|p|\!-\!1}{|p|^2\!-\!2}\right)\!\!
      \prod_{p^2\| a}\!\!\left(1+\frac{1}{|p|^2\!-\!1}\right)\!
      \frac{\Eul(k_1)\Eul(k_2)}{|k_1k_2|^2}\!
      \prod_{p \mid k_1k_2}\!\!\left(1-\frac{2}{|p|^2}\right)^{-1} \\
      &\qquad\qquad +  O\!\left(N^3q^{\frac{2N}{3}}\right) \\
      &\qquad = E(N;a) \frac{\Eul(k_1)\Eul(k_2)}{|k_1k_2|^2}
      \prod_{p \mid k_1k_2}\left(1-\frac{2}{|p|^2}\right)^{-1}
      + O\!\left(N^3q^{\frac{2N}{3}}\right),
 \end{align*}
 
 where
\begin{align*}
E(N;a) :&= \mathcal{Q}_N\prod_p \left(1-\frac{2}{|p|^2}\right)
\prod_{p\mid a}\left(1-\frac{|p|-1}{|p|^2-2}\right)
\prod_{p^2\| a}\left(1+\frac{1}{|p|^2-1}\right) \\
&= E(N)\prod_{p\mid a}\left(1-\frac{|p|-1}{|p|^2-2}\right)
\prod_{p^2\| a}\left(1+\frac{1}{|p|^2-1}\right). \qedhere
\end{align*}
\end{proof}

Note that if we define $\rho :\Mon\longrightarrow \mathbb{R}$ by
\begin{align*}
\rho(m)&=\frac{\Eul(m)}{|m|^2}\prod_{p\mid m}\left( 1- \frac{2}{|p|^2} \right)^{-1}
= \frac{1}{|m|}\prod_{p\mid m}\left(1-\frac{|p|-2}{|p|^2-2}\right),
\end{align*}
then \Cref{siftable_3} can be rewritten as
\[
\sum_{\substack{\deg f\le N \\ f\in \Mon \\ f\equiv 0 \pmod{k_1} \\
f+a\equiv 0\pmod{k_2} }}\mu^2(f)\mu^2(f+a)\ind_{(f,a)=1}
= E(N;a)\rho(k_1k_2) + O\!\left(N^3q^{\frac{2N}{3}}\right).
\]
We are now ready to prove \cref{siftability}.

\begin{proof}[Proof of \Cref{siftability}]
 Fix $a\in\Mon$. Applying \cref{siftable_thm} with $k_1=k_2=1$, we get
 \begin{align*}
   \sum_{\substack{\deg f\le N \\ f\in \Mon }}\!\mu^2(f)\mu^2(f+a)\ind_{(f,a)=1}
   &= E(N;a) \\
   &= \mathcal{Q}_N\!\prod_p \!\left(1-\frac{2}{|p|^2}\right)\!
   \prod_{p\mid a}\!\left(1-\frac{|p|-1}{|p|^2-2}\right)\!
   \prod_{p^2\| a}\!\left(1+\frac{1}{|p|^2-1}\right)\\
   & \asymp q^N,
 \end{align*}
Thus condition~(i) of
siftability holds. Moreover, for any coprime $k_1,k_2$ with
$|k_1k_2|\le q^N$ we have
\[
\sum_{\substack{\deg f\le N \\ f\in \Mon \\ f\equiv 0 \pmod{k_1} \\
f+a\equiv 0\pmod{k_2} }}\mu^2(f)\mu^2(f+a)\ind_{(f,a)=1}
= \rho(k_1k_2)\left[E(N;a) + R_a(N;k_1,k_2)\right],
\]
where $R_a(N;k_1,k_2):= \rho(k_1k_2)^{-1}O(N^3q^{\frac{2N}{3}})$. Clearly
$0\le \rho(p)\le\frac{1}{|p|}$ for every prime~$p$. Now note that for any fixed
$\theta\in(0,1/3)$
\begin{align}
    \sum_{\substack{|k_1k_2|\le q^{N\theta} \\ \mu^2(k_1k_2)=1}}
    \rho(k_1k_2)R_a(N;k_1,k_2)
    & \ll N^3q^{\frac{2N}{3}}\sum_{\deg (k_1k_2)\le N\theta}1 = N^3q^{\frac{2N}{3}}\sum_{\deg f\le N\theta}\tau(f) \notag \\
    &= N^3q^{\frac{2N}{3}}\sum_{m=1}^{[N\theta]}\ \sum_{\deg f=m}\tau(f)
    \le N^3q^{\frac{2N}{3}}\sum_{m=1}^{[N\theta]}mq^m \notag \\
    &\ll N^3q^{\frac{2N}{3}+N\theta} \notag \\
    & = q^{N(1-\epsilon)}\quad 
    \ \ll \ \frac{q^N}{N^3}. \notag
\end{align}
For some $\epsilon >0$ .Thus $S=\{f\in \Mon \mid \mu^2(f)=1\}$ is siftable with respect to~$a$.
Furthermore, since $\rho$ and $\theta$ do not depend on~$a$, $S$ is indeed
\emph{siftable}.

Lastly, using \cref{Mertens_thm}
 we see that
\begin{align*}
    \lambda_\rho(M)&=\sum_{\deg p\le M} \rho(p)
= \sum_{\deg p \le M}\frac{1}{|p|}\left(1-\frac{|p|-2}{|p|^2-2}\right) \\
&= \sum_{\deg p \le M}\frac{1}{|p|} + O(1) \asymp \log M .
\end{align*}

Thus $S$ is regular.
\end{proof}

\section{The probability space of
$(\widetilde{\omega}_{M},\widetilde{\omega}_{M}\circ T_a)$:
the bivariate Kubilius model for polynomials}
\label{sec:Mass}

\begin{center}\rule{0.45\textwidth}{0.4pt}\end{center}

\noindent\textbf{Standing conventions for this section and the next.}
Our
$
\theta\in\left(0,\tfrac13\right)
$(the level of distribution) and
$\Delta>0$ are arbitrary but fixed and we let $C=\max\{C_0,\frac{2\Delta}{\theta\log q}\}$ where $C_0$ is the constant supplied by
\cref{brun_quantitative}. All results below assume
$ M\ \le\ \frac{N}{C\log N},
$
which guarantees $\frac{N}{M}\ge C_0\log M$
and hence the sieve error $M^{-\Delta}$ of \cref{sieve_main_thm}.

\begin{center}\rule{0.45\textwidth}{0.4pt}\end{center}

We are now ready to study the measure associated to
$(\widetilde{\omega}_{M},\widetilde{\omega}_{M}\circ T_a)$. We restrict
ourselves to the subset of $\Mon$ that is of interest to us, namely
\[
\Fset(N;a):= \{f\in \Mon \mid \deg f\le N,\ f\in S,\ f+a\in S,\ (f,a)=1 \}
\subset \Mon .
\]
Now for $k_1,k_2\mid\Pam$ with $(k_1,k_2)=1$ we define
\[
E_{k_1,k_2} :=\{ f\in \Fset(N;a) : k_1\mid f,\ k_2\mid f+a,\
(f,\Pam/k_1)=(f+a,\Pam/k_2)=1\}.
\]
Essentially, $E_{k_1,k_2}$ is the set of all polynomials $f$ of degree at most
$N$ such that $f/k_1$ and $(f+a)/k_2$ are polynomials divisible only by primes
$p$ of degree greater than~$M$.

Note that if $f\in E_{k_1,k_2}$ then $\omega_M(f)=\omega(k_1)$ and
$\omega_M(f+a)=\omega(k_2)$.

Now let $\Ecal:= \{E_{k_1,k_2} : k_1,k_2\mid\Pam,\ (k_1,k_2)=1\}$;
then it
is easy to see that $\Ecal$ is a partition of $\Fset(N;a)$.

Thus if $\mathcal{G}$ = $\sigma$-algebra generated by $\Ecal$, then $(\omega_{M},\omega_{M}\circ T_a): \left(\Fset(N;a),\mathcal{G}\right) \rightarrow \mathbb{R}^2$ is measurable. Furthermore, we define a probability measure
$\nu_M: \mathcal{G} \longrightarrow [0,1]$ by
\[
\nu_M(E_{k_1,k_2})=\frac{|E_{k_1,k_2}|}{|\Fset(N;a)|},
\]
extended additively to all of $\mathcal{G}$.

\begin{proposition}
\label{nu_1}
    Fix $k_1,k_2 \mid\Pam$ with $(k_1,k_2)=1$ and
    $|k_1k_2|\le q^{\frac{N\theta}{2}}$, where $\theta$ is the level of
    distribution of~$S$. Then
    \[
    \nu_M(E_{k_1,k_2})=\left(1+O\!\left(M^{-\Delta}
    +\frac{1}{N}\right)\right)\rho(k_1k_2)
    \prod_{p\mid\Pam/k_1k_2}\left(1-2\rho(p)\right),
    \]
    where $\Delta$ is the saving exponent of \Cref{sieve_main_thm}.
\end{proposition}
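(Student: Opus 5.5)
The plan is to write $|E_{k_1,k_2}|$ as a two-dimensional sifting problem and apply the pure Brun sieve of \cref{sieve_main_thm}. Concretely, $f\in E_{k_1,k_2}$ exactly when $f\in \Fset(N;a)$, $k_1\mid f$, $k_2\mid f+a$, and no prime $p\mid \Pam/(k_1k_2)$ divides $f(f+a)$. Two small points make this clean:
\begin{itemize}
\item For $p\mid k_1$, the conditions $p\nmid f+a$ and $p\nmid (f/k_1)$ are automatic. The first holds because $p\nmid a$. The second holds because $f$ is squarefree.
\item For $p\nmid a$, the primes cannot divide both $f$ and $f+a$.
\end{itemize}
Hence
\[
|E_{k_1,k_2}|=\sum_{\substack{f\in\Fset(N;a)\\ k_1\mid f,\ k_2\mid f+a}}\ \prod_{p\mid \Pam/(k_1k_2)}\bigl(1-\ind_{p\mid f}-\ind_{p\mid f+a}\bigr),
\]
and expanding the product gives a sum of $\mu(d_1)\mu(d_2)$ over coprime pairs $d_1,d_2$ with $d_1d_2\mid \Pam/(k_1k_2)$. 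In each term the counting function is the one of \cref{def:siftable}(ii)(b) with moduli $k_1d_1$ and $k_2d_2$.

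Next I replace the sharp expansion by the Brun weights $\gamma^{\pm}$ (equivalently $\gamma^{(r)}$ truncated at an even or odd number of prime factors). This gives upper and lower bounds. Each bound then splits as a main term plus a remainder. The main term is
\[
\bigl|\Fset(N;a)\bigr|\,\rho(k_1k_2)\sum_{d_1,d_2}\gamma^{\pm}\,\mu(d_1)\mu(d_2)\rho(d_1d_2).
\]
The remainder is
\[
\rho(k_1k_2)\sum \rho(d_1d_2)\,|R_a(N;k_1d_1,k_2d_2)|.
\]
Here I use \cref{siftability}, which gives $\rho(k_1k_2d_1d_2)=\rho(k_1k_2)\rho(d_1d_2)$ by multiplicativity and coprimality. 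The truncated inner sum is compared with the full product $\prod_{p\mid \Pam/k_1k_2}(1-2\rho(p))$ using the sieve bound of \cref{sieve_main_thm}, which has relative error $M^{-\Delta}$. That bound applies under the standing hypothesis $N/M\ge C_0\log M$, with the sieving dimension $2$ coming from $\lambda_\rho$ regular and $2\rho(p)\le 2/|p|$.

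The remaining step, which I expect to be the main obstacle, is controlling the remainder. I will choose the truncation level $r$ so that every $d_1d_2$ in the support of $\gamma^{\pm}$ satisfies $|d_1d_2|\le q^{N\theta/2}$. Combined with $|k_1k_2|\le q^{N\theta/2}$, this puts all moduli $k_1k_2d_1d_2$ within the level $q^{N\theta}$. Such a choice is possible because $d_1d_2$ has at most $r$ prime factors, each of degree $\le M$. So it suffices that $rM\le N\theta/2$, and the constant $C\ge 2\Delta/(\theta\log q)$ is what makes $r\asymp\Delta\log M$ admissible. Siftability (ii)(b) then bounds the total remainder by $\ll q^N/N^3$. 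I still need to express this relative to $\rho(k_1k_2)\prod(1-2\rho(p))\cdot|\Fset(N;a)|$, and the check is the following:
\begin{itemize}
\item The product is $\gg (\log M)^{-O(1)}$ by Mertens (\cref{Mertens_thm}).
\item $|\Fset(N;a)|\asymp q^N$ by (i).
\item $\rho(k_1k_2)\ge |k_1k_2|^{-1}\cdot c^{\omega(k_1k_2)}\ge q^{-N\theta/2}$ times a mild factor.
\end{itemize}
This last factor is dangerous. So I will instead bound the remainder termwise with $\rho(k_1k_2)$ pulled out, and restrict the sum in (ii)(b) to moduli divisible by $k_1k_2$. Since $\rho(k_1k_2)$ is a common factor, the bound becomes $\rho(k_1k_2)\cdot O(q^N/N^3)$ after using that $\rho(k_1k_2d_1d_2)|R|$ summed over all pairs is at most the full siftability sum. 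If that sum is only available without the factor, I would instead redo the remainder estimate directly from \cref{siftable_thm}, where $\rho(k)R=O(N^3q^{2N/3})$. This gives total error $\ll N^3q^{2N/3+N\theta}$, which is $\ll \rho(k_1k_2)q^N/N$ because $\theta<1/3$ leaves room for $|k_1k_2|\le q^{N\theta/2}$. The resulting relative error is then $O(1/N)$.

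Finally, dividing by $|\Fset(N;a)|$ gives the claimed formula for $\nu_M(E_{k_1,k_2})$ with error $O(M^{-\Delta}+1/N)$.
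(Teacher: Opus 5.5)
Your argument works, but you organise the sieve differently from the paper.

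\textbf{Comparison with the paper.} The paper (\cref{nu_0}) sifts $f$ and $f+a$ separately. It uses $\Psi^{+}=(1*\gamma^{+})(1*\gamma^{+})$ and the three-term combination $\Psi^{-}$, so its main term is the double sum $\sum\gamma^{s_1}(d_1)\gamma^{s_2}(d_2)\rho(k_1k_2d_1d_2)$. This is evaluated by applying \cref{sieve_main_thm} twice: first to the $d_2$-sum with density $\rho\,\ind_{(\cdot,k_1k_2d_1)=1}$, uniformly in $d_1$, and then to the $d_1$-sum with density $\rho/(1-\rho)$.

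You instead sift the single product $f(f+a)$ by the primes of $\Pam/k_1k_2$, truncating at $\omega(d_1d_2)<r$ jointly. Each squarefree $d$ coprime to $a$ splits uniquely, so your main term becomes $|\Fset(N;a)|\rho(k_1k_2)\sum_{d}\gamma^{(r)}(d)\,2^{\omega(d)}\rho(d)\ind_{(d,k_1k_2)=1}$. That is one application of \cref{sieve_main_thm} with $w(p)=2\rho(p)$. Both bounds then follow from the parity of $r$, with no $\Psi^{-}$ inequality and no nested evaluation, which is cleaner. The price is a dimension-two density: $C_0$ must be taken for $w^{*}(p)=2\rho(p)/(1-2\rho(p))\le 6/|p|$. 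This needs $q\neq 2$, exactly as the paper's condition $\rho(p)\neq\frac12$ does. Your fallback remainder treatment is what the paper does: it uses the pointwise bound $\rho(k)R_a\ll N^3q^{2N/3}$ from \cref{siftable_thm} rather than \cref{def:siftable}(ii)(b).

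\textbf{Corrections needed.}

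First, your initial remainder idea fails. Hypothesis (ii)(b) only bounds $\sum\rho(k_1k_2d_1d_2)|R_a|$ by $q^N/N^3$. Factoring out $\rho(k_1k_2)$ leaves $\sum\rho(d_1d_2)|R_a|$, about which (ii)(b) says nothing. So the fallback is necessary, not optional.

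Second, in the fallback you count all moduli up to $q^{N\theta}$ and get $N^3q^{N(2/3+\theta)}$. The main term is $\gg q^{N(1-\theta/2)}N^{-3}$, so this bound suffices only for $\theta<2/9$, not for all $\theta<1/3$. Count only the pairs in your support: $\#\{(d_1,d_2)\}\le\sum_{\deg d\le N\theta/2}\tau(d)\ll Nq^{N\theta/2}$. The remainder is then $\ll N^4q^{N(2/3+\theta/2)}$, and the relative error is $\ll N^7q^{-N(1/3-\theta)}\ll 1/N$. This is exactly where $\theta<1/3$ enters.

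Third, $\prod_{p\mid\Pam/k_1k_2}(1-2\rho(p))$ is of size $M^{-2}$, not $(\log M)^{-O(1)}$, because $\sum_{\deg p\le M}|p|^{-1}=\log M+O(1)$. This is harmless given the exponential room.

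Fourth, the admissible size of $r$ comes from $N/M\ge C_0\log M$, with $C_0$ from \cref{brun_quantitative} computed for the $A_0$ above. It does not come from $C\ge 2\Delta/(\theta\log q)$, which the paper uses only for the Rankin step in \cref{sigma_nu_distance}. An $r$ of size $\Delta\log M$ gives no saving unless it exceeds $eV\approx 2e\log M$.
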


To prove this, we first prove the following lemma.

\begin{lemma}
\label{nu_0}
    With the notation above,
    \begin{equation*}
    \nu_M(E_{k_1,k_2})=\left(1+O\!\left(M^{-\Delta}\right)\right)
    \rho(k_1k_2)\prod_{p\mid\Pam/k_1k_2}\left(1-2\rho(p)\right)
    + O\!\left(\frac{R_a^*(N;k_1,k_2)}{|\Fset(N;a)|} \right),
    \end{equation*}
    where
    \[
   R_a^*(N;k_1,k_2) = \sum_{\substack{d_1\mid\Pam/k_1 \\ d_2\mid \Pam/k_2 \\
   (k_1d_1,k_2d_2 )=1 \\ |d_i|\le q^{\theta N/4}  }}
   \rho(k_1d_1k_2d_2) \left| R_a(N;k_1d_1,k_2d_2) \right| .
    \]
    
\end{lemma}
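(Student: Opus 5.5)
The plan is to express $|E_{k_1,k_2}|$ as a two-dimensional sifting problem and apply the pure Brun sieve of \cref{sieve_main_thm}, using the level of distribution from \cref{siftability}.

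\textbf{Step 1 (setting up the sifting problem).} Write $g_1=f/k_1$ and $g_2=(f+a)/k_2$. Then
\[
|E_{k_1,k_2}|=\sum_{\substack{f\in\Fset(N;a)\\ k_1\mid f,\ k_2\mid f+a}}\ind_{(f,\Pam/k_1)=1}\,\ind_{(f+a,\Pam/k_2)=1}.
\]
Two simplifications apply. Since $f$ is squarefree, the condition $(f,\Pam/k_1)=1$ is the same as $(f/k_1,\Pam)=1$. A prime $p\mid\Pam$ cannot divide both $f$ and $f+a$, because $p\nmid a$. So each prime $p\mid \Pam/(k_1k_2)$ removes two residue classes, $f\equiv0$ and $f\equiv -a \pmod p$. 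A prime $p\mid k_1$ only needs to be sifted from $f+a$, which is automatic since $p\mid f$ and $p\nmid a$; similarly for $p\mid k_2$.

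\textbf{Step 2 (sieve weights and main term).} Expand the coprimality indicators using the Brun weights $\gamma^{\pm}$ (or $\gamma^{(r)}$) of \Cref{sec:brun}, with divisors $d_1\mid \Pam/k_1$ and $d_2\mid\Pam/k_2$ satisfying $(k_1d_1,k_2d_2)=1$. The weights are supported on $|d_i|\le q^{\theta N/4}$, so $|k_1d_1k_2d_2|\le q^{\theta N}$. The inner counts
\[
\#\{f:\ k_1d_1\mid f,\ k_2d_2\mid f+a\}
\]
(weighted by $\mu^2(f)\mu^2(f+a)\ind_{(f,a)=1}$) are given by \cref{siftability}. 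Each equals $\rho(k_1d_1k_2d_2)\bigl(X+R_a(N;k_1d_1,k_2d_2)\bigr)$, where $X=E(N;a)+O(N^3q^{2N/3})=(1+O(q^{-N/4}))|\Fset(N;a)|$.

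Multiplicativity of $\rho$ gives $\rho(k_1d_1k_2d_2)=\rho(k_1k_2)\rho(d_1d_2)$. The main term is then $X\rho(k_1k_2)$ times a two-dimensional sieve sum $\sum\gamma(d_1)\gamma(d_2)\mu(d_1)\mu(d_2)\rho(d_1d_2)$ over primes dividing $\Pam/(k_1k_2)$, with dimension $\kappa=2$. \cref{sieve_main_thm} evaluates this as
\[
\bigl(1+O(M^{-\Delta})\bigr)\prod_{p\mid \Pam/k_1k_2}\bigl(1-2\rho(p)\bigr).
\]
This holds because the sifting range $M$ and the sieve level $\theta N/4$ satisfy $N/M\ge C_0\log M$ under our standing conventions. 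The remainder terms contribute at most $R_a^*(N;k_1,k_2)$, after bounding $|\gamma|\le1$. Dividing by $|\Fset(N;a)|$ gives the claimed formula.

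\textbf{Main obstacle.} The hardest part is applying the sieve with the primes in $k_1,k_2$ excluded from the sifting set, together with the asymmetric divisor conditions ($d_1$ may share primes with $k_2$, but $k_1d_1$ must be coprime to $k_2d_2$). I would handle this by combining the pair $(d_1,d_2)$ into the single squarefree modulus $d=d_1d_2$ with local density $2\rho(p)$ at $p\mid\Pam/(k_1k_2)$, and density $\rho(p)$ (one class only) at primes of $k_2$ appearing in $d_1$ and at primes of $k_1$ appearing in $d_2$. Those latter primes are in fact never sifted, since $p\mid k_2$ cannot divide $f$. This reduces the problem to a one-parameter $\kappa=2$ Brun sieve over a subproduct of $\Pam$, which is exactly the form \cref{sieve_main_thm} handles. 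Uniformity of the sieve error in $k_1,k_2$ follows from $\rho(p)\le 1/|p|$.
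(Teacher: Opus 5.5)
\textbf{The gap is in Step~2.} As written, you expand the two coprimality conditions separately, so the weights are $\gamma^{\pm}(d_1)\gamma^{\pm}(d_2)$. Two things then fail.

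First, a product of lower sieves is not a lower sieve. For $n\neq1$ one has $(1*\gamma^-)(n)=-\binom{\omega(n)-1}{r-1}\le 0$. So $(1*\gamma^-)(n_1)(1*\gamma^-)(n_2)$ can be positive where $\ind_{n_1=1}\ind_{n_2=1}=0$. The paper repairs this with
$\Psi^-(f,g)=(1*\gamma^+)(f)(1*\gamma^-)(g)+(1*\gamma^-)(f)(1*\gamma^+)(g)-(1*\gamma^+)(f)(1*\gamma^+)(g)$.

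Second, \cref{sieve_main_thm} only evaluates one-variable sums $\sum_{d\mid\Pam}\gamma^{s}(d)w(d)$. A double sum with product weights is not of that form, because $\ind_{\omega(d_1)<r}\ind_{\omega(d_2)<r}\neq\ind_{\omega(d_1d_2)<r}$. Relabelling $d=d_1d_2$ after expanding therefore does not reduce it to the theorem. The paper instead applies \cref{sieve_main_thm} twice: first in $d_2$ with $w=w_{k_1k_2d_1}$, then in $d_1$ with $w(d)=w_{k_1k_2}(d)\prod_{p\mid d}(1-\rho(p))^{-1}$. This produces $\prod(1-\rho(p))\bigl(1-\tfrac{\rho(p)}{1-\rho(p)}\bigr)=\prod(1-2\rho(p))$.

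\textbf{How to repair it.} Your last paragraph has the right idea, but the combination must be made before sieving, not after. Your Step~1 already shows that for $f$ with $k_1\mid f$ and $k_2\mid f+a$, the two indicators equal the single indicator $\ind_{(f(f+a),P')=1}$, where $P'=\Pam/(k_1k_2)$.
\begin{itemize}
\item Sieve this single condition with one weight $\gamma^{\pm}(d)$, $d\mid P'$.
\item Because $(d,a)=1$, the condition $d\mid f(f+a)$ splits uniquely as $d_1=(d,f)$, $d_2=(d,f+a)$. The inner count is then $\sum_{d_1d_2=d}\rho(k_1d_1k_2d_2)\bigl(|\Fset(N;a)|+R_a(N;k_1d_1,k_2d_2)\bigr)$, so the density is $\rho(k_1k_2)2^{\omega(d)}\rho(d)$.
\item Apply \cref{sieve_main_thm} once, with $w(p)=2\rho(p)\ind_{p\nmid k_1k_2}$. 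This satisfies $\Omega$ uniformly in $k_1,k_2$, since $q\neq2$ gives $w^*(p)\le\frac{2(|p|-1)}{|p|(|p|-2)}\le\frac{4}{|p|}$. Moreover $W(M)=\prod_{p\mid P'}(1-2\rho(p))$.
\item Using $|\gamma^{\pm}|\le1$, the remainder is at most $R_a^*(N;k_1,k_2)$. This holds because the pairs with $d_1d_2\mid P'$ and $\deg d_1d_2\le\theta N/4$ lie in the index set of $R_a^*$.
\end{itemize}

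Done this way, your route differs from the paper's and is simpler. A single dimension-two sieve replaces both the $\Psi^{\pm}$ device and the nested application, where the inner sieve's $d_1$-dependent error has to be carried through a signed sum over $d_1$. It also gives the lower bound with $-R_a^*$ rather than $-3R_a^*$.

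One smaller correction: primes of $k_2$ dividing $d_1$, and primes of $k_1$ dividing $d_2$, have density $0$, not $\rho(p)$. As you then note, they simply do not belong to the sifting set.
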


\begin{proof}
For coprime $k_1,k_2\mid \Pam$,
\begin{align*}
    |E_{k_1,k_2}| & = \sum_{\substack{\deg f\le N \\ f\equiv 0 \pmod{k_1} \\
    f\equiv -a \pmod{k_2}}} \ind_{(f,\Pam/k_1)=1}\ind_{(f+a,\Pam/k_2)=1}
    \ind_S(f)\ind_S(f+a)\ind_{(f,a)=1} \\
    & = \sum_{\substack{\deg f\le N \\ f\equiv 0 \pmod{k_1} \\
    f\equiv -a \pmod{k_2}}} \!\!\!\!(1*\mu)\left(f,\Pam/k_1\right)
    (1*\mu)\left(f+a,\Pam/k_2\right)\ind_S(f)\ind_S(f+a)\ind_{(f,a)=1}
\end{align*}
Now, to estimate $|E_{k_1,k_2}|$ we bound $(1*\mu)$ by the sieve weights
discussed in \cref{sieve_main_thm}. Thus for each $d\mid\Pam$,
\[
(1*\gamma^-)(d) \le (1*\mu)(d)= \ind_{d=1} \le  (1*\gamma^+)(d).
\]

Recall from \cref{sieve_main_thm} that the support of $\gamma^-,\gamma^+$ is
$\deg d\le\frac{\theta N}{4}$; we will use this fact later. Now define
\begin{gather}
\label{psi-_1}
\Psi^+(f,g):= (1*\gamma^+)(f)(1*\gamma^+)(g), \notag \\
\Psi^-(f,g) :=  (1*\gamma^+)(f)(1*\gamma^-)(g) + (1*\gamma^-)(f)(1*\gamma^+)(g)
- (1*\gamma^+)(f)(1*\gamma^+)(g).
\end{gather}

Obviously $(1*\mu)(f)(1*\mu)(g) \le \Psi^+(f,g)$ for all $f,g \in \A$. Also,
\begin{align}
\label{psi-_2}
    \Psi^-(f,g) &= (1*\gamma^-)(f)(1*\gamma^-)(g)-
    \left[(1*\gamma^+)(f)-(1*\gamma^-)(f)\right]
    \left[(1*\gamma^+)(g)(1*\gamma^-)(g)\right]  \notag \\
    & \le (1*\gamma^-)(f)(1*\gamma^-)(g).
\end{align}
Thus from \cref{psi-_1},\Cref{psi-_2}, $\Psi^-(f,g) \le (1*\mu)(f)(1*\mu)(g) $ regardless the sign of $(1*\gamma^-)(f)$, $(1*\gamma^-)(g)$.
Thus $\Psi^-(f,g) \le (1*\mu)(f)(1*\mu)(g) \le \Psi^+(f,g)$, and so
\begin{align*}
    |E_{k_1,k_2}| &  \le \sum_{\substack{\deg f\le N \\ f\equiv 0 \pmod{k_1} \\
    f\equiv -a \pmod{k_2}}}\Psi^+\!\left( \left( f, \Pam/k_1 \right),
    \left( f+a,\Pam/k_2 \right) \right)\ind_S(f)\ind_S(f+a)\ind_{(f,a)=1} \\
    & = \sum_{\substack{\deg f\le N \\ f\equiv 0 \pmod{k_1} \\
    f\equiv -a \pmod{k_2}}}\ \sum_{d_1\mid\left( f, \Pam/k_1 \right)}
    \ \sum_{d_2\mid \left( f+a, \Pam/k_2 \right)}
    \gamma^+(d_1)\gamma^+(d_2)\ind_S(f)\ind_S(f+a)\ind_{(f,a)=1} \\
    & =\sum_{\substack{d_1\mid\Pam/k_1 \\ d_2\mid \Pam/k_2 \\
    (k_1d_1,k_2d_2)=1 \\ |d_i| \le q^{\theta N/4} }} \gamma^+(d_1)\gamma^+(d_2)
    \sum_{\substack{\deg f\le N \\ f\equiv 0 \pmod{k_1d_1} \\
    f\equiv -a \pmod{k_2d_2}}} \ind_S(f)\ind_S(f+a)\ind_{(f,a)=1} \\
    & \le |\Fset(N;a)| \sum_{\substack{d_1\mid\Pam/k_1 \\ d_2\mid \Pam/k_2 \\
    (k_1d_1,k_2d_2)=1 \\ |d_i| \le q^{\theta N/4} }}
    \gamma^+(d_1)\gamma^+(d_2)\rho (k_1k_2d_1d_2)
    + R_a^*(N;k_1,k_2).
\end{align*}

Label the resulting bound
\begin{align}
\label{level_set_Ubound}
|E_{k_1,k_2}| \le |\Fset(N;a)| \sum_{\substack{d_1\mid\Pam/k_1 \\
d_2\mid \Pam/k_2 \\ (k_1d_1,k_2d_2)=1 \\ |d_i| \le q^{\theta N/4} }}
\gamma^+(d_1)\gamma^+(d_2)\rho (k_1k_2d_1d_2) + R_a^*(N;k_1,k_2).
\end{align}
Similarly, for a lower bound we have
\[
    |E_{k_1,k_2}| \ge \sum_{\substack{\deg f\le N \\ f\equiv 0 \pmod{k_1} \\
    f\equiv -a \pmod{k_2}}}\Psi^-\!\left( \left( f, \Pam/k_1 \right),
    \left( f+a,\Pam/k_2 \right) \right)\ind_S(f)\ind_S(f+a)\ind_{(f,a)=1},
\]
so that
\begin{multline}
\label{level_set_lbound}
     |E_{k_1,k_2}| \ge |\Fset(N;a)| \!\!\!\!\!\sum_{\substack{d_1\mid\Pam/k_1 \\
     d_2\mid \Pam/k_2 \\ (k_1d_1,k_2d_2)=1 \\ |d_i| \le q^{\theta N/4} }}
     \!\!\!\!\!\!\!\left[\gamma^+(d_1)\gamma^-(d_2) + \gamma^-(d_1)\gamma^+(d_2)
     - \gamma^+(d_1)\gamma^+(d_2)\right]\rho (k_1k_2d_1d_2)  \\
     -3 R_a^*(N;k_1,k_2).
\end{multline}
Now, given $k\in \A$, define $w_k(g):= \rho(g)\ind_{(k,g)=1}$.

Note that for any sign pattern $s_1,s_2 \in \{+,-\}$,

\begin{align*}
    & \sum_{\substack{d_1\mid\Pam/k_1 \\ d_2\mid \Pam/k_2 \\
    (k_1d_1,k_2d_2)=1 \\ |d_i| \le q^{\theta N/4} }}
    \gamma^{s_1}(d_1)\gamma^{s_2}(d_2)\rho (k_1k_2d_1d_2) \\
    & \qquad = \rho(k_1k_2)\sum_{\substack{d_1\mid\Pam/k_1 \\ (d_1,k_2)=1}}
    \gamma^{s_1}(d_1)\rho(d_1)\sum_{\substack{d_2\mid\Pam/k_2 \\
    (d_2,k_1d_1)=1}}\gamma^{s_2}(d_2)\rho(d_2)\\
    & \qquad = \rho(k_1k_2)\sum_{d_1\mid\Pam}\gamma^{s_1}(d_1)w_{k_1k_2}(d_1)
    \sum_{d_2\mid\Pam }\gamma^{s_2}(d_2)w_{k_1k_2d_1}(d_2) \\
    & \qquad = \rho(k_1k_2)\sum_{d_1\mid\Pam}\gamma^{s_1}(d_1)w_{k_1k_2}(d_1)
    \left[ 1+ O\!\left(M^{-\Delta}\right) \right]
    \prod_{p\mid\Pam/k_1k_2d_1}\left(1-w(p)\right) \\
    &\qquad\qquad
    [\text{Using \cref{sieve_main_thm} on the inner sum with } w=w_{k_1k_2d_1}] \\
    & \qquad =\rho(k_1k_2)\left[ 1+ O\!\left(M^{-\Delta}\right)
    \right] \!\!\!\prod_{p\mid\Pam/k_1k_2}\!\!\!\left(1-w(p)\right)
    \sum_{d_1\mid\Pam}\gamma^{s_1}(d_1)w_{k_1k_2}(d_1)
    \prod_{p\mid d_1}\left(1-w(p)\right)^{-1} \\
    & \qquad =\rho(k_1k_2)\left[ 1+ O\!\left(M^{-\Delta}\right)
    \right] \prod_{p\mid\Pam/k_1k_2}(1-\rho(p))
    \left(1-\frac{\rho(p)}{1-\rho(p)}\right) \\
    &\qquad\qquad \left[\text{Using \cref{sieve_main_thm} with }
    w=\frac{w_{k_1k_2}(d)}{\prod_{p\mid d}(1-\rho(p))}\right]
\end{align*}

and hence
\begin{align}
\label{bivariate_sieve}
    \sum_{\substack{d_1\mid\Pam/k_1 \\ d_2\mid \Pam/k_2 \\
    (k_1d_1,k_2d_2)=1 \\ |d_i| \le q^{\theta N/4} }}
    \gamma^{s_1}(d_1)\gamma^{s_2}(d_2)\rho (k_1k_2d_1d_2)
    =\rho(k_1k_2)\left[ 1+ O\!\left(M^{-\Delta}\right) \right]
    \prod_{p\mid\Pam/k_1k_2}(1-2\rho(p)).
\end{align}
Note that in the calculation above, in order to apply \cref{sieve_main_thm} we
need $w_{k_1k_2d_1}$ and $\frac{w_{k_1k_2}(d)}{\prod_{p\mid d}(1-\rho(p))}$ to
satisfy $\Omega_0$ and to be not
equal to~$1$ at primes. These are
clearly satisfied for $w_{k_1k_2d_1}$, since $\rho$ satisfies $\Omega$.
Furthermore, $\frac{w_{k_1k_2}(d)}{\prod_{p\mid d}(1-\rho(p))}$ clearly
satisfies $\Omega_0$ and is not equal to~$1$ as long as $\rho(p)\ne \frac{1}{2}$,
which in our case means $|p|\ne2$; but this is anyway true as long as $q\ne2$.

Now using \Cref{bivariate_sieve} in \Cref{level_set_Ubound} and
\Cref{level_set_lbound} we get
\[
     |E_{k_1,k_2}|\le |\Fset(N;a)|\,\rho(k_1k_2)
     \left[ 1+ O\!\left(M^{-\Delta}\right) \right]
     \prod_{p\mid\frac{\Pam}{k_1k_2}}(1-2\rho(p)) +R_a^*(N;k_1,k_2)
\]
and
\[
     |E_{k_1,k_2}|\ge |\Fset(N;a)|\,\rho(k_1k_2)
     \left[ 1+ O\!\left(M^{-\Delta}\right) \right]
     \prod_{p\mid\frac{\Pam}{k_1k_2}}(1-2\rho(p)) -3R_a^*(N;k_1,k_2).
\]
Thus
\[
     \nu_M(E_{k_1,k_2})=\left[ 1+ O\!\left(M^{-\Delta}\right)
     \right] \rho(k_1k_2)\prod_{p\mid\frac{\Pam}{k_1k_2}}(1-2\rho(p))
     +O\!\left(\frac{R_a^*(N;k_1,k_2)}{|\Fset(N;a)|}\right). \qedhere
\]
\end{proof}

We now turn to the proof of \cref{nu_1}.

\begin{proof}[Proof of \Cref{nu_1}]
 Recall from the proof of \cref{siftability} $\rho(k_1k_2)R_a(N;k_1,k_2)=O(N^3q^\frac{2N}{3})$. Thus for fixed $k_1,k_2$
 \begin{align}
 \label{R_star}
     R_a^*(N;k_1,k_2) & = \sum_{\substack{d_1\mid\Pam/k_1 \\ d_2\mid \Pam/k_2 \\
     (k_1d_1,k_2d_2 )=1 \\ |d_i|\le q^{\theta N/4}  }}\rho(k_1d_1k_2d_2)
     \left| R_a(N;k_1d_1,k_2d_2) \right| \notag \\
     & \ll \sum_{\substack{|d_1|\le q^{\frac{\theta N}{4}} \\
     |d_2|\le q^{\frac{\theta N}{4}} }}N^3q^\frac{2N}{3} \ll N^3q^{N(\frac{\theta}{2}+\frac{2}{3})}
 \end{align}

Also note that
\begin{align*}
    \prod_{p\mid\frac{\Pam}{k_1k_2}}\left(1-2\rho(p)\right)
    \ge \prod_{\deg p \le M}\left(1-\frac{2}{|p|}\right)
    \ \ge \ \exp\left(-2\sum_{|p|\le q^M}\frac{1}{|p|}\right)
    \ \gg \ N^{-2},
\end{align*}

and for $|k_1k_2|\le q^{\frac{N\theta}{2}}$ , $\rho(k_1k_2)\gg q^{-\frac{N\theta}{2}} $
\begin{align*}
    \frac{q^N}{N^3} &\ll \frac{q^N}{N}
    \prod_{p\mid\frac{\Pam}{k_1k_2}}\left(1-2\rho(p)\right)
    \ \ll \ \frac{|\Fset(N;a)|}{N}
    \prod_{p\mid\frac{\Pam}{k_1k_2}}\left(1-2\rho(p)\right),
\end{align*}
Putting this back into \cref{nu_0} gives
\[
\nu_M(E_{k_1,k_2})=\left(1+O\!\left(M^{-\Delta}
+\frac{1}{N}\right)\right)\rho(k_1k_2)
\prod_{p\mid\Pam/k_1k_2}\left(1-2\rho(p)\right). \qedhere
\]
\end{proof}

Next we define $\sigma_M : \mathcal{G} \longrightarrow [0,1]$ as the main term
of $\nu_M(E_{k_1,k_2})$, i.e.
\[
\sigma_M(E_{k_1,k_2})=\rho(k_1k_2)\prod_{p\mid\Pam/k_1k_2}\left(1-2\rho(p)\right).
\]
This is also a probability measure, since
\begin{align}
\label{sigma_is_prob_measure}
    \sigma_M(\Fset(N;a))&= \sum_{\substack{k_1,k_2\mid\Pam \\ (k_1,k_2)=1}}
    \sigma_M(E_{k_1,k_2})
    =\sum_{\substack{k_1,k_2\mid\Pam \\ (k_1,k_2)=1}}
    \rho(k_1k_2)\prod_{p\mid\Pam/k_1k_2}\left(1-2\rho(p)\right) \\  \notag
    &= \prod_{p\mid\Pam}\left(1-2\rho(p)\right)
    \!\!\sum_{k_1\mid\Pam}\!\!\!\rho(k_1)\prod_{p\mid k_1}\left(1-2\rho(p)\right)^{-1}
    \!\!\!\!\!\!\sum_{k_2\mid\Pam/k_1}\!\!\!\!\rho(k_2)\prod_{p\mid k_2}\left(1-2\rho(p)\right)^{-1}\\ \notag
    &=\prod_{p\mid\Pam}\left(1-2\rho(p)\right)
    \sum_{k_1\mid\Pam}\rho(k_1)\prod_{p\mid k_1}\left(1-2\rho(p)\right)^{-1}
    \prod_{p\mid\Pam/k_1} \left(1+ \frac{\rho(p)}{1-2\rho(p)}\right) \\ \notag
    &= \prod_{p\mid\Pam}\left[\left(1-2\rho(p)\right)
    \left(1+ \frac{\rho(p)}{1-2\rho(p)}\right)\right] \\ \notag
    &\qquad\qquad \times \sum_{k_1\mid\Pam}\rho(k_1)\prod_{p\mid k_1}
    \left[\left(1-2\rho(p)\right)
    \left(1+ \frac{\rho(p)}{1-2\rho(p)}\right)\right]^{-1}\\  \notag
    &= \prod_{p\mid\Pam}\left[(1-\rho(p))
    \left(1+ \frac{\rho(p)}{1-\rho(p)}\right)\right] =1 .
\end{align}

Thus for $|k_1k_2|\le q^{\frac{N\theta}{2}}$,
\[
\nu_M(E_{k_1,k_2})=\left(1+O\!\left(M^{-\Delta}
+\frac{1}{N}\right)\right)\sigma_M(E_{k_1,k_2})
= \sigma_M(E_{k_1,k_2}) + O\!\left(R(N,M)\right),
\]
where $R(N,M):=M^{-\Delta}+\frac{1}{N}$. We claim that this
holds for all other $k_1,k_2$ as well, and consequently we will have the
following.

\begin{lemma}
\label{sigma_nu_distance}
    For any $A\in \mathcal{G}$,
    $\nu_M(A)=\sigma_M(A) + O\left(R(N,M)\right)$.
\end{lemma}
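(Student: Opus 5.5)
Since every $A\in\mathcal{G}$ is a disjoint union of atoms $E_{k_1,k_2}$, the plan is to split the atoms by size. Let $\mathcal{S}$ be the set of pairs $(k_1,k_2)$ with $|k_1k_2|\le q^{N\theta/2}$ (the ``small'' atoms) and $\mathcal{L}$ the remaining pairs. Then
\[
|\nu_M(A)-\sigma_M(A)|\le \sum_{(k_1,k_2)\in\mathcal{S}}|\nu_M(E_{k_1,k_2})-\sigma_M(E_{k_1,k_2})| + \nu_M\Big(\bigcup_{\mathcal{L}}E_{k_1,k_2}\Big)+\sigma_M\Big(\bigcup_{\mathcal{L}}E_{k_1,k_2}\Big).
\]
For the first sum, \cref{nu_1} gives the multiplicative form $\nu_M=(1+O(R(N,M)))\sigma_M$ on each small atom. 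Summing over all small atoms therefore gives at most $O(R(N,M))\sum\sigma_M(E_{k_1,k_2})\le O(R(N,M))$, because $\sigma_M$ is a probability measure by \cref{sigma_is_prob_measure}. The multiplicative form matters: an additive $O(R)$ per atom would not be summable over exponentially many atoms.

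For the large atoms, the key identity is that summing both measures over all atoms gives $1$. Hence
\[
\nu_M\Big(\bigcup_{\mathcal{L}}\Big)=1-\sum_{\mathcal{S}}\nu_M(E_{k_1,k_2})=1-(1+O(R))\sum_{\mathcal{S}}\sigma_M(E_{k_1,k_2})=\sigma_M\Big(\bigcup_{\mathcal{L}}\Big)+O(R).
\]
So it suffices to show that $\sigma_M(\bigcup_{\mathcal{L}})\ll R(N,M)$, i.e.\ that the Kubilius model gives negligible mass to pairs with $\deg(k_1k_2)>N\theta/2$.

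That tail bound for $\sigma_M$ is the main obstacle. I would prove it by Rankin's trick. For $\alpha>0$,
\[
\sum_{\mathcal{L}}\sigma_M(E_{k_1,k_2})\le q^{-\alpha N\theta/2}\sum_{k_1,k_2}|k_1k_2|^{\alpha}\sigma_M(E_{k_1,k_2}).
\]
Repeating the factorisation in \cref{sigma_is_prob_measure} with weights inserted, the right-hand sum becomes
\[
\prod_{\deg p\le M,\,p\nmid a}\big(1-2\rho(p)+2\rho(p)|p|^{\alpha}\big)\le \exp\Big(2\sum_{\deg p\le M}\rho(p)(|p|^\alpha-1)\Big).
\]
Take $\alpha=1/M$, so that $|p|^\alpha\le q$ for $\deg p\le M$. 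Then $|p|^\alpha-1\ll \deg p/M$, and by $\rho(p)\le 1/|p|$ together with \cref{Mertens_thm} the exponent is $O_q(1)$. The tail is therefore $\ll q^{-N\theta/(2M)}$. Under the standing hypothesis $M\le N/(C\log N)$ with $C\ge 2\Delta/(\theta\log q)$, we have $N\theta/(2M)\cdot\log q\ge \Delta\log N$, so the tail is $\ll N^{-\Delta}\le M^{-\Delta}$, which is $\ll R(N,M)$.

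Combining the two parts gives $\nu_M(A)=\sigma_M(A)+O(R(N,M))$ uniformly in $A$. The only delicate points are checking that the Rankin product matches the factorisation of $\sigma_M$ (it does, since each prime contributes $(1-2\rho(p))$ if it divides neither $k_1$ nor $k_2$, and $\rho(p)|p|^\alpha$ for each of the two other options), and ensuring the constant $C$ is large enough. I expect the latter to be exactly where the choice $C\ge 2\Delta/(\theta\log q)$ in the standing conventions is used.
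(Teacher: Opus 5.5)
Your proposal is correct and is essentially the paper's argument: the same split at $|k_1k_2|=q^{N\theta/2}$ using the multiplicative form of \cref{nu_1}, the same complementary-mass identity to reduce the $\nu_M$-tail to the $\sigma_M$-tail, and Rankin's trick with exponent $1/M$. Your Rankin weight $|k_1k_2|^{1/M}$ is a slightly cleaner variant of the paper's $\rho(k_1k_2)^{-1/M}$, since it avoids the monotonicity argument for $t^{1-\eta}-t$. One citation needs fixing: the bound $\frac{1}{M}\sum_{\deg p\le M}\frac{\deg p}{|p|}=O(1)$ comes from Mertens' first theorem (\cref{Mertens_1st_thm}), or simply from $\#\{p:\deg p=n\}\le q^n/n$, whereas \cref{Mertens_thm} alone would only give $O(\log M)$ in the exponent.
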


\begin{proof}
   For the time being, suppose
    \begin{equation}
    \label{sigma_assumption}
    \sum_{\substack{k_1,k_2\mid \Pam\\ \mu^2(k_1k_2)=1 \\
    |k_1k_2|>q^{\frac{N\theta}{2}}}}\sigma_M(E_{k_1,k_2}) \ll R(N,M).
    \end{equation}
    Then
    \begin{align*}
        \sum_{\substack{k_1,k_2\mid \Pam\\ (k_1,k_2)=1 \\
        |k_1k_2|>q^{\frac{N\theta}{2}}}}\nu_M(E_{k_1,k_2})
        &=1-\sum_{\substack{k_1,k_2\mid \Pam\\ (k_1,k_2)=1 \\
        |k_1k_2|\le q^{\frac{N\theta}{2}}}}\nu_M(E_{k_1,k_2})\\
        &=1-(1+O(R(N,M)))\sum_{\substack{k_1,k_2\mid \Pam\\ (k_1,k_2)=1 \\
        |k_1k_2|\le q^{\frac{N\theta}{2}}}}\sigma_M(E_{k_1,k_2})\\
        &=\sum_{\substack{k_1,k_2\mid \Pam\\ (k_1,k_2)=1 \\
        |k_1k_2|>q^{\frac{N\theta}{2}}}}\sigma_M(E_{k_1,k_2}) +O(R(N,M))
        =O(R(N,M)).
    \end{align*}
    Now for any $A\in \mathcal{G}$ we have
    $A=\bigsqcup_{E_{k_1,k_2}\in \Ecal_A}E_{k_1,k_2}$ with
    $\Ecal_A\subset \Ecal$. Set
    \[
    \Ecal_A^+=\left\{E_{k_1,k_2}\in\Ecal_A : |k_1k_2|>q^{\frac{N\theta}{2}}\right\}.
    \]
    Thus
    \begin{align*}
        \nu_M(A)&=\sum_{E_{k_1,k_2}\in \Ecal_A}\nu_M(E_{k_1,k_2}) \\
        &=\sum_{E_{k_1,k_2}\in \Ecal_A\setminus \Ecal_A^+}\nu_M(E_{k_1,k_2})
        + \sum_{E_{k_1,k_2}\in \Ecal_A^+}\nu_M(E_{k_1,k_2}) \\
        &=(1+O(R(N,M)))\sum_{E_{k_1,k_2}\in \Ecal_A\setminus \Ecal_A^+}
        \sigma_M(E_{k_1,k_2}) + \sum_{E_{k_1,k_2}\in \Ecal_A^+}\nu_M(E_{k_1,k_2}) \\
        &=(1+O(R(N,M)))\sigma_M(A) \\
        &\qquad +O\!\left(R(N,M)+ \Bigg|\sum_{\substack{k_1,k_2\mid \Pam\\
        (k_1,k_2)=1\\ |k_1k_2|>q^{\frac{N\theta}{2}}}}\sigma_M(E_{k_1,k_2})\Bigg|
        + \Bigg|\sum_{\substack{k_1,k_2\mid \Pam\\ (k_1,k_2)=1\\
        |k_1k_2|>q^{\frac{N\theta}{2}}}}\nu_M(E_{k_1,k_2})\Bigg|\right)\\
        &=(1+O(R(N,M)))\sigma_M(A)+O\left(R(N,M)\right)=\sigma_M(A)+O(R(N,M)).
    \end{align*}

    So all that remains is to show \Cref{sigma_assumption}.

    We apply Rankin's trick. Let $0<\eta\le\frac{1}{2}$ be a parameter to be
    chosen later. By \cref{def:siftable}(ii)(a) we have $|p|\rho(p)\le1$ for
    every prime $p$, hence $|m|\rho(m)\le1$ for every squarefree $m\in\Mon$;
    so for all $k_1,k_2\mid \Pam$ with
    $(k_1,k_2)=1$ and $|k_1k_2|>q^{\frac{N\theta}{2}}$,
    \[
    1\le \left(|k_1k_2|\rho(k_1k_2)\right)^{-\eta}
    \le q^{-\frac{N\theta\eta}{2}}\rho(k_1k_2)^{-\eta}.
    \]
    
    Hence
    \begin{align*}
     \sum_{\substack{k_1,k_2\mid \Pam\\ (k_1,k_2)=1 \\
     |k_1k_2|>q^{\frac{N\theta}{2}}}}\sigma_M(E_{k_1,k_2})
    &= 
    \sum_{\substack{k_1,k_2\mid \Pam\\ (k_1,k_2)=1 \\
    |k_1k_2|>q^{\frac{N\theta}{2}}}}\rho(k_1k_2)
    \prod_{p\mid \Pam/ k_1k_2}\left(1-2\rho(p)\right) \\
    &\le  q^{-\frac{N\theta\eta}{2}}
    \sum_{\substack{k_1,k_2\mid \Pam\\ (k_1,k_2)=1 }}\rho(k_1k_2)^{1-\eta}
    \prod_{p\mid \Pam / k_1k_2}\left(1-2\rho(p)\right) \\
    &=q^{-\frac{N\theta\eta}{2}}
    \prod_{p\mid \Pam}\left[1+2\left(\rho(p)^{1-\eta}-\rho(p)\right)\right] \\
    &\qquad\qquad
    [\text{Using the same method as in \cref{sigma_is_prob_measure}}] \\
    &\le q^{-\frac{N\theta\eta}{2}}\exp{\left[\sum_{p\mid \Pam}2\left(\rho(p)^{1-\eta}-\rho(p)\right)\right]}\\
    &\qquad\qquad
    [\text{Using $1+t\le e^t$ for non-negative $t$}]
    \end{align*}
Now $t^{1-\eta}-t$ is increasing in $0\le t\le (1-\eta)^\frac{1}{\eta}=:g_\eta$. For $\eta$ small $\frac{1}{3} \le g_\eta \le \frac{1}{e}$ and since $q\ne 2$ $\rho(p) \le \frac{1}{3}$ Thus,
\[
\rho(p)^{1-\eta}-\rho(p) \le \frac{1}{|p|}(|p|^\eta-1)\le \eta q^{M\eta}\frac{\log|p|}{|p|}
\]
Thus by choosing $\eta=\frac{1}{M}$ and using Merten's first theorem \ref{Mertens_1st_thm}, we have,
\begin{align*}
    \sum_{\substack{k_1,k_2\mid \Pam\\ (k_1,k_2)=1 \\
     |k_1k_2|>q^{\frac{N\theta}{2}}}}\sigma_M(E_{k_1,k_2})&\le q^{-\frac{N\theta\eta}{2}}\exp{\left(2\eta q^{M\eta}\sum_{p\mid \Pam}\frac{\log|p|}{|p|}\right)}\\
     &\ll \exp{\left(-\frac{N\theta\eta}{2}\log q  +2\eta q^{M\eta}M\log q\right)} \\
     &=\exp{\left(-\frac{\theta N \log q}{2M}+2q\log q\right)}\\
     &\ll \exp{\left(-\Delta\log M\right)=M^{-\Delta}\le R(N,M)}
\end{align*}
\end{proof}

\section{The Kubilius model and its Poisson approximation}
\label{poisson}

We finally have everything we need to write the law for
$(\omega_{M},\omega_{M}\circ T_a): \Fset(N;a) \rightarrow \mathbb{R}^2$ explicitly and to write its Kubilius
approximation. For each pair $(l_1,l_2) \in \mathbb{Z}^2$ we define the set
$S(l_1,l_2)\in \mathcal{G}$ by
\begin{align*}
S(l_1,l_2)&=\bigcup_{\substack{k_1,k_2\mid\Pam,\ (k_1,k_2)=1 \\
\omega(k_1)=l_1,\ \omega(k_2)=l_2 }}E_{k_1,k_2} \\
& \!\!\!\!\!\!\!\!\!\!\!\!=\left\{f\in \Mon \mid \deg f\le N,\ f\in S,\ f+a\in S,\ (f,a)=1,
\ \omega_{M}(f)=l_1,\ \omega_{M}(f+a)=l_2\right\}
\end{align*}
Now we define the atomic measures $\nu_M^*$ and $\sigma_M^*$ on $\mathbb{R}^2$ by
\begin{align*}
    \nu_M^*\left(\{(l_1,l_2)\}\right) & := \nu_M(S(l_1,l_2)), \\
    \sigma_M^*\left(\{(l_1,l_2)\}\right) & := \sigma_M(S(l_1,l_2)),
\end{align*}
extended additively, and set to $0$ outside the lattice. It is immediate that
\[
\left| \nu_M^*\left(\{(l_1,l_2)\}\right)
-  \sigma_M^*\left(\{(l_1,l_2)\}\right)\right| \ll R(N,M).
\]
Now it is easy to see that $\nu_M^*$ is the law of the random vector
$(\omega_{M},\omega_{M}\circ T_a)$. Indeed, if $\mathbb{P}_{N,S,a}$ denotes the
`normalised' counting measure on $\Fset(N;a)$, then
\[
\nu_M^*(\{(l_1,l_2)\})
=\mathbb{P}_{N,S,a}\circ (\omega_{M},\omega_{M}\circ T_a)^{-1}
\left(\{(l_1,l_2)\}\right).
\]

Now we construct a random variable for which $\sigma^*_M$ is the law. For each
$p\mid \Pam$, let $X_p:=(B_p,B_p')$ denote independent bivariate Bernoulli
random vectors such that
\[
X_p =
\begin{cases}
    (1,0) & \text{with probability } \rho(p), \\
    (0,1) & \text{with probability } \rho(p), \\
    (0,0) & \text{with probability } 1-2\rho(p),
\end{cases}
\]

and set
\[
\Sigma_M:=\sum_{\deg p\le M}X_p .
\]
It is easy to see that $\Sigma_M$ is the random vector for which $\sigma^*_M$
is the law.

Now we use the well known method of approximating this sum
of bivariate Bernoulli vectors by a bivariate Poisson vector.

\begin{definition}
    Given two measures $\mu_1,\mu_2$ on a $\sigma$-algebra $\mathcal{B}$, the \emph{total variation distance} between $\mu_1$ and $\mu_2$ is defined as
    \[
    \dTV(\mu_1,\mu_2):= \sup_{A\in \mathcal{B}} \left| \mu_1(A)-\mu_2(A)\right| .
    \]
\end{definition}

\begin{lemma}[\cite{Roos1999MetricMP}]
\label{roos}
    Let $\{X_k\}_{k \geq 1}$ be a sequence of independent Bernoulli random $2$-vectors, and
    let $\mathbb{P}(X_k = (1,0)) = p_{k,1}$, $\mathbb{P}(X_k = (0,1)) = p_{k,2}$.
    Set $p_k := p_{k,1} + p_{k,2}$ and suppose that
    $\mathbb{P}(X_k = (0,0)) = 1 - p_k$. Put $S_n := \sum_{1 \leq k \leq n} X_k$
    and $\lambda_n(j) := \sum_{1 \leq k \leq n} p_{k,j}$ for $j = 1,2$.

    Let $Z = (Z_1,Z_2)$ be a random $2$-vector where $Z_1$ and $Z_2$ are
    independent Poisson random variables with respective parameters
    $\lambda_n(1)$ and $\lambda_n(2)$. Then
\[
\dTV\!\bigl(\mathcal{L}(S_n), \mathcal{L}(Z)\bigr)
= \sup_{A \subseteq \mathbb{R}^2}
\left| \mathcal{L}(S_n)(A) - \mathcal{L}(Z)(A) \right|
\ll \sum_{j=1}^{2} \min\!\left\{1,\lambda_n(j)^{-1}\right\}
\sum_{1 \leq k \leq n} p_{k,j}^{\,2}.
\]
\end{lemma}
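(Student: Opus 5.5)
The plan is to prove the bound by Stein--Chen's method for Poisson approximation, applied to the bivariate sum, with independence of the $X_k$ reducing everything to single-summand perturbations. I would not try to track the sharpest constants; the goal is the factor $\min\{1,\lambda_n(j)^{-1}\}\sum_k p_{k,j}^2$ for each coordinate.

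First I would reduce to one coordinate at a time by an interpolation (telescoping) argument. Let $Z^{(1)}$ be the vector obtained by replacing the first coordinate of $S_n$ with an independent $\Poisson(\lambda_n(1))$ variable. Since $X_k$ takes only the values $(1,0),(0,1),(0,0)$, the two coordinates of $S_n$ are dependent. I would therefore use the splitting property instead: each $X_k$ can be generated as follows. First decide whether $X_k\neq(0,0)$, with probability $p_k$. Then, given that, choose direction $1$ with probability $p_{k,1}/p_k$.

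The cleaner route is a bivariate Stein operator. For a test function $h=\ind_A$, solve
\[
\lambda_n(1)\bigl(g(x+e_1)-g(x)\bigr)\cdot(\text{coordinatewise}) \ldots
\]
More precisely, I would use the generator
\[
\mathcal{A}g(x)=\sum_{j=1}^{2}\Bigl[\lambda_n(j)\bigl(g(x+e_j)-g(x)\bigr)+x_j\bigl(g(x-e_j)-g(x)\bigr)\Bigr]
\]
of two independent immigration--death processes, whose stationary law is $\mathcal{L}(Z)$. Solve $\mathcal{A}g=h-\mathbb{E}h(Z)$ via the semigroup formula $g=-\int_0^\infty (P_th-\mathbb{E}h(Z))\,dt$.

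Next, compute $\mathbb{E}\mathcal{A}g(S_n)$ by the standard leave-one-out trick. Write $x_j=\sum_k B_{k,j}$ and $S_n^{(k)}=S_n-X_k$, where $S_n^{(k)}$ is independent of $X_k$. This gives
\[
\mathbb{E}\bigl[B_{k,j}\,(g(S_n-e_j)-g(S_n))\bigr]=p_{k,j}\,\mathbb{E}\bigl[g(S_n^{(k)})-g(S_n^{(k)}+e_j)\bigr].
\]
Similarly,
\[
\lambda_n(j)\,\mathbb{E}\Delta_jg(S_n)=\sum_k p_{k,j}\,\mathbb{E}\Delta_jg(S_n^{(k)}+X_k).
\]
The difference for each $k,j$ is $p_{k,j}\,\mathbb{E}\bigl[\Delta_jg(S_n^{(k)}+X_k)-\Delta_jg(S_n^{(k)})\bigr]$. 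This vanishes unless $X_k\neq 0$, which has probability $p_{k,1}+p_{k,2}$.

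That leaves cross terms $p_{k,1}p_{k,2}$, coming from second differences $\Delta_1\Delta_2 g$. I would bound these by AM--GM, $p_{k,1}p_{k,2}\le\frac12(p_{k,1}^2+p_{k,2}^2)$, so they fold into the claimed form. The result is
\[
|\mathbb{E}h(S_n)-\mathbb{E}h(Z)|\ll \sum_{j}\sum_k p_{k,j}^2\,\sup\bigl(\|\Delta_j^2 g\|_\infty+\|\Delta_1\Delta_2 g\|_\infty\bigr).
\]

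The main obstacle is the Stein factor: showing that the second differences of $g$ are $\ll\min\{1,\lambda_n(j)^{-1}\}$ uniformly over sets $A\subseteq\mathbb{Z}^2$. The mixed term has to be controlled by the smaller parameter appropriately. The pure term $\Delta_j^2g$ follows from the one-dimensional Barbour--Eagleson estimate applied to sections, using the product structure of $P_t$.

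For the mixed term I would use the coupling representation. $\Delta_1\Delta_2P_th$ involves the event that two independent death clocks have both not yet fired, which gives a factor $e^{-2t}$. Combining this with the smoothing of a Poisson($\lambda_n(j)(1-e^{-t})$) marginal in direction $j$ gives an integrand of size $\min\{1,(\lambda_n(j)(1-e^{-t}))^{-1/2}\}e^{-2t}$. Integrating this only yields $\lambda^{-1/2}$, which is too weak. So this is exactly where I expect difficulty.

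If the semigroup estimate proves insufficient, the fallback is Roos's approach. Work with generating functions, writing
\[
\prod_k\bigl(1+p_{k,1}(z_1-1)+p_{k,2}(z_2-1)\bigr)=\exp\Bigl(\sum_j\lambda_n(j)(z_j-1)\Bigr)\prod_k\bigl(1+\ldots\bigr)e^{-\ldots}.
\]
Expand the correction factors in Charlier polynomials, and bound the total variation norm of the resulting signed measures using the known one-dimensional bound $\|\Delta^2\Poisson(\lambda)\|_{TV}\ll\min\{1,\lambda^{-1}\}$ in each coordinate. Since the cross terms are quadratic in the $p$'s, they again reduce by AM--GM.
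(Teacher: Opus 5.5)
The paper gives no proof of this lemma; it quotes it from Roos. Your Stein-method route has a genuine gap. It sits where you suspected, but it cannot be fixed by sharper bookkeeping.

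\textbf{The Stein factors are too large.} For $\Poisson(\lambda)\otimes\Poisson(\lambda)$ with the immigration--death generator, the uniform Stein factors $\sup_A\sup_x|\Delta_i\Delta_j g_A(x)|$ are at least of order $\lambda^{-1}\log\lambda$. This holds for the pure differences as well as the mixed one.

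By your own coupling, $\Delta_i\Delta_jP_th(x)=e^{-2t}\,\mathbb{E}\,\Delta_i\Delta_jh(Y^x_t)$. Here $Y^x_t$ has independent coordinates $\mathrm{Bin}(x_j,e^{-t})+\Poisson(\lambda(1-e^{-t}))$. For $x=(\lambda,\lambda)$ with $\lambda\in\mathbb{N}$, each coordinate has mean $\lambda$ and variance $\lambda(1-e^{-2t})$ for every $t$.

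For the mixed term, take $A=\{y:y_1>\lambda,\ y_2>\lambda\}$. Then $\Delta_1\Delta_2\ind_A(y)=\ind_{y_1=\lambda}\ind_{y_2=\lambda}$, so
\[
|\Delta_1\Delta_2 g_A(x)|=\int_0^\infty e^{-2t}\,\mathbb{P}\bigl(Y_1(t)=\lambda\bigr)\,\mathbb{P}\bigl(Y_2(t)=\lambda\bigr)\,dt\gg\int_{1/\lambda}^{1}\frac{dt}{\lambda t}\asymp\frac{\log\lambda}{\lambda},
\]
because the integrand never changes sign. Smoothing in one coordinate gave you $\lambda^{-1/2}$. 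Smoothing in both improves this only to $\lambda^{-1}\log\lambda$, and this example shows that is the true size.

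For the pure term, take the cone $A=\{y:|y_1-\lambda|\ge|y_2-\lambda|\}$. The local CLT gives $\mathbb{E}\,\Delta_1^2\ind_A(Y^x_t)\sim\bigl(\pi\lambda(1-e^{-2t})\bigr)^{-1}$ as $\lambda t\to\infty$. The constant comes from $\iint_{|u|\ge|v|}\phi''(u)\phi(v)\,du\,dv=1/\pi$, with $\phi$ the standard normal density. Hence $|\Delta_1^2g_A(x)|\gg\lambda^{-1}\log\lambda$ as well.

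So your claim that the pure term follows from Barbour--Eagleson applied to sections is false. The semigroup acting in the other coordinate turns the sections into a $t$-dependent family of test functions. The one-dimensional $1/\lambda$ bound, by contrast, relies on cancellation in the time integral for a single fixed test function. Consequently, bounding $\mathbb{E}\Delta_i\Delta_jg(S_n^{(k)})$ by $\sup|\Delta_i\Delta_jg|$ can only give the lemma with an extra factor $\log\lambda_n(j)$. In this paper that would weaken $1/\log M$ in \cref{main_thm} to $\log\log M/\log M$.

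\textbf{Two further points.} First, even with sharp factors, your unweighted AM--GM is the wrong bookkeeping. The natural size of the mixed factor is $\min\{1,\lambda_1^{-1/2}\}\min\{1,\lambda_2^{-1/2}\}$. For instance, with $\lambda_2=0$, $x=(\lambda_1,0)$ and $A=\{y:y_1>\lambda_1,\ y_2=0\}$, the same coupling gives $|\Delta_1\Delta_2 g_A(x)|\asymp\lambda_1^{-1/2}$. You therefore need the weighted inequality $2p_{k,1}p_{k,2}\,\varepsilon_1^{1/2}\varepsilon_2^{1/2}\le p_{k,1}^2\varepsilon_1+p_{k,2}^2\varepsilon_2$, where $\varepsilon_j=\min\{1,\lambda_n(j)^{-1}\}$. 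The unweighted version would need the mixed factor to be $\ll\min_j\varepsilon_j$, which fails when $\lambda_1\gg1\gg\lambda_2$.

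Second, your fallback points at the right kind of argument, but as sketched it only treats the quadratic term of $\prod_k(1+w_k)e^{-w_k}-1$. The real work is controlling all higher-order terms uniformly. Tools for this include:
\begin{itemize}
\item the identity $\|\Delta_1^{a}\Delta_2^{b}(\Poisson(\lambda_1)\otimes\Poisson(\lambda_2))\|=\|\Delta^{a}\Poisson(\lambda_1)\|\,\|\Delta^{b}\Poisson(\lambda_2)\|$;
\item bounds such as $\|\Delta^{m}\Poisson(\lambda)\|\le(m/\lambda)^{m/2}$, which follow from Charlier orthogonality and Cauchy--Schwarz;
\item summation of the resulting series, noting that the lemma is trivial once its right-hand side exceeds a constant.
\end{itemize}
If you want to stay with Stein's method, you must not pass to $\sup_x$. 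Keep
\[
\mathbb{E}\Delta_i\Delta_jg(S_n^{(k)})=-\int_0^\infty e^{-2t}\,\mathbb{E}\,\Delta_i\Delta_jh\bigl(Y_t^{S_n^{(k)}}\bigr)\,dt,
\]
and use the smoothness of the law of $Y_t^{S_n^{(k)}}$, which is a thinned multinomial sum plus an independent Poisson vector. That in turn needs its own smoothness estimate for sums of independent multinomial vectors.
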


Thus if we take $n=\#\{$primes of degree $\le M\}$ and
$p_{p,1}=p_{p,2}=\rho(p)$ for each prime $p$ of degree $\le M$
, then
$\lambda_n(1)=\lambda_n(2)=\sum_{\deg p\le M}\rho(p)=\lambda_{\rho}(M)$,
$S_n=\Sigma_M$ and $\sigma^*_M=\mathcal{L}(S_n)$. So \cref{roos} gives
\begin{align}
\label{sigma_and_poisson_ineq}
    \dTV(\sigma^*_M,\mathcal{L}(Z))
    & \ll 2\min\{1,\lambda_\rho(M)^{-1}\}\sum_{|p|\le q^M}\rho(p)^2
    \notag \\
    & \le 2\min\{1,\lambda_\rho(M)^{-1}\}\sum_{|p|\le q^M}\frac{1}{|p|^2}
    \ \ll \ \min\{1,\lambda_\rho(M)^{-1}\},
\end{align}

where $Z=(Z_1,Z_2)$ with $Z_1,Z_2$ i.i.d.\ $\Poisson(\lambda_\rho(M))$.

Now we state the most important theorem of this section.

\begin{theorem}
\label{nu_and_Poisson_ineq}
    Let $Z=(Z_1,Z_2)$ be a random vector taking values in $\mathbb{Z}^2$, where
    $Z_1,Z_2$ are i.i.d.\ $\Poisson(\lambda_\rho(M))$. Then
    \[
    \dTV(\nu_M^*,\mathcal{L}(Z))\ll
    \min \left\{1,\frac{1}{\lambda_\rho(M)}\right\}
    +M^{-\Delta}+\frac{1}{N}.
    \]
\end{theorem}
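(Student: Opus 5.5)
The plan is to apply the triangle inequality for $\dTV$ with $\sigma_M^*$ as the intermediate measure:
\[
\dTV(\nu_M^*,\mathcal{L}(Z))\le \dTV(\nu_M^*,\sigma_M^*)+\dTV(\sigma_M^*,\mathcal{L}(Z)).
\]
The second term is at most $\min\{1,\lambda_\rho(M)^{-1}\}$, which is exactly \cref{sigma_and_poisson_ineq}. That bound comes from \cref{roos} together with the convergence of $\sum_p |p|^{-2}$. Everything therefore reduces to showing $\dTV(\nu_M^*,\sigma_M^*)\ll R(N,M)=M^{-\Delta}+\frac1N$.

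For that, I would start from the definitions. Both $\nu_M^*$ and $\sigma_M^*$ are atomic measures on $\mathbb{Z}^2$. For any $A\subseteq\mathbb{R}^2$ we have
\[
\nu_M^*(A)=\nu_M\Big(\bigsqcup_{(l_1,l_2)\in A\cap\mathbb{Z}^2}S(l_1,l_2)\Big),
\]
and the same identity holds for $\sigma_M^*$ with $\sigma_M$ in place of $\nu_M$. The set $B_A:=\bigsqcup_{(l_1,l_2)\in A\cap\mathbb{Z}^2}S(l_1,l_2)$ is a union of blocks $E_{k_1,k_2}$, so it lies in $\mathcal{G}$. Applying \cref{sigma_nu_distance} to $B_A$ gives $|\nu_M^*(A)-\sigma_M^*(A)|=|\nu_M(B_A)-\sigma_M(B_A)|\ll R(N,M)$. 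I would then take the supremum over $A$.

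The one point needing care is that this bound must be uniform in $A$, because $\dTV$ is a supremum and the pointwise estimate $|\nu_M^*(\{(l_1,l_2)\})-\sigma_M^*(\{(l_1,l_2)\})|\ll R(N,M)$ is not enough by itself. Uniformity follows from the proof of \cref{sigma_nu_distance}. The implied constant there came only from two sources: the factor $(1+O(R))$ on the blocks with $|k_1k_2|\le q^{N\theta/2}$, applied to $\sigma_M(A)\le 1$, and the total mass of the large blocks. That total mass is at most $O(R)$ in both measures, by \cref{sigma_assumption} and the complementary computation. Neither source depends on $A$.

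So I expect no genuine obstacle, only this bookkeeping of uniformity. Combining the two terms gives the stated bound $\min\{1,\lambda_\rho(M)^{-1}\}+M^{-\Delta}+\frac1N$.
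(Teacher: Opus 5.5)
Your proposal is correct and follows the paper's proof. It uses the triangle inequality through $\sigma_M^*$, takes the Poisson term from \cref{sigma_and_poisson_ineq}, and bounds the other term by applying \cref{sigma_nu_distance} to the $\mathcal{G}$-measurable set $B_A$. Your check that the $O(R(N,M))$ bound is uniform in the set is what the paper uses implicitly when it writes $\sup_{B\in\mathcal{G}}|\nu_M(B)-\sigma_M(B)|\ll R(N,M)$.
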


\begin{proof}
    We know that $\dTV$ is a metric on the set of all probability measures.
    Thus by the triangle inequality,
    \[
    \dTV(\nu_M^*,\mathcal{L}(Z))\le   \dTV(\nu_M^*,\sigma_M^*)
    +\dTV(\sigma_M^*,\mathcal{L}(Z)).
    \]
    Since $\sigma_M$ and $\nu_M$ are supported on the sets of $\mathcal{G}$,
    \cref{sigma_nu_distance}
     implies
    \begin{align*}
    \dTV(\nu_M^*,\sigma_M^*)
    &= \sup_{A \in \Bor(\mathbb{R}^2)} \left| \nu_M^*(A) - \sigma_M^*(A) \right|
    \leq \sup_{B \in \mathcal{G}} \left| \nu_M(B) - \sigma_M(B) \right|\\
    &\ll R(N,M)=M^{-\Delta}+\frac{1}{N}.
    \end{align*}
    
    This together with \Cref{sigma_and_poisson_ineq} gives the desired result.
\end{proof}

\section{The bivariate Erd\H{o}s--Kac theorem and truncated Chowla for
polynomials}
\label{sec:Aprox}

Since our Erd\H{o}s--Kac theorem is in terms of the characteristic function, we
first observe the following.

\begin{proposition}
\label{char_fn}
    Let $\varphi^*_{N,S,a}(u,v;M)$ be the characteristic function of the random
    vector $(\widetilde{\omega}_{M},\widetilde{\omega}_{M}\circ T_a)$ ,where $\widetilde{\omega}_M(f)=\frac{\omega_M(f)-\lambda_\rho(M)}{\sqrt{\lambda_\rho(M)}}$. Then
    \[
    \varphi^*_{N,S,a}(u,v;M)=\frac{1}{|\Fset(N;a)|}
    \sum_{\substack{|f|\le q^N \\ (f,a)=1}}\ind_S(f)\ind_S(f+a)
    \exp\!\left\{iu\widetilde{\omega}_{M}(f)
    +iv\widetilde{\omega}_{M}(f+a)\right\}.
    \]
\end{proposition}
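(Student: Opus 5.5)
This is essentially an unwinding of definitions. The underlying probability space is $\Fset(N;a)$ with the normalised counting measure $\mathbb{P}_{N,S,a}$. By definition, the characteristic function of the random vector $(\widetilde{\omega}_M,\widetilde{\omega}_M\circ T_a)$ is
\[
\varphi^*_{N,S,a}(u,v;M)=\mathbb{E}\,e^{iu\widetilde{\omega}_M+iv\widetilde{\omega}_M\circ T_a}
=\int_{\mathbb{R}^2}e^{i(ux+vy)}\,d\mathcal{L}(x,y).
\]
The plan is to compute this expectation in two equivalent ways and check that both give the stated sum.

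\emph{Measurability.} First I will confirm that $\widetilde{\omega}_M$ and $\widetilde{\omega}_M\circ T_a$ are $\mathcal{G}$-measurable. This holds because $\omega_M$ and $\omega_M\circ T_a$ are constant on each cell $E_{k_1,k_2}$ of the partition $\Ecal$, taking the values $\omega(k_1)$ and $\omega(k_2)$ there. The normalisation $x\mapsto (x-\lambda_\rho(M))/\sqrt{\lambda_\rho(M)}$ is an affine map, so the law of the normalised vector is the pushforward of $\nu_M^*$ under this map in each coordinate.

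\emph{Computing the expectation.} Since $\nu_M^*$ is atomic on $\mathbb{Z}^2$, the integral reduces to a finite sum:
\[
\varphi^*_{N,S,a}(u,v;M)=\sum_{(l_1,l_2)}\nu_M(S(l_1,l_2))\,\exp\!\Big\{iu\tfrac{l_1-\lambda_\rho(M)}{\sqrt{\lambda_\rho(M)}}+iv\tfrac{l_2-\lambda_\rho(M)}{\sqrt{\lambda_\rho(M)}}\Big\}.
\]
Next I substitute $\nu_M(S(l_1,l_2))=|S(l_1,l_2)|/|\Fset(N;a)|$. Because the sets $S(l_1,l_2)$ partition $\Fset(N;a)$, and on $S(l_1,l_2)$ we have $\omega_M(f)=l_1$ and $\omega_M(f+a)=l_2$, the sum regroups into a single sum over $f\in\Fset(N;a)$ of $\exp\{iu\widetilde{\omega}_M(f)+iv\widetilde{\omega}_M(f+a)\}$, divided by $|\Fset(N;a)|$.

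\emph{Rewriting the index set.} Finally, membership $f\in\Fset(N;a)$ is exactly the condition that $f$ is monic with $\deg f\le N$ (that is, $|f|\le q^N$), $(f,a)=1$, and $\ind_S(f)\ind_S(f+a)=1$. Inserting these indicators gives the displayed formula.

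There is no real obstacle. The only point needing care is to state explicitly that $f$ ranges over monics, and that the $\sigma$-algebra $\mathcal{G}$ is fine enough for the normalised functions to be measurable; the latter follows from the constancy on cells noted above.
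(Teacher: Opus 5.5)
Your proposal is correct and follows essentially the same route as the paper. Both arguments compute the expectation through the atomic law $\nu_M^*$ pushed forward to the normalised lattice, substitute $\nu_M(S(l_1,l_2))=|S(l_1,l_2)|/|\Fset(N;a)|$, and regroup the sum using the partition of $\Fset(N;a)$ into the level sets $S(l_1,l_2)$; your remarks on $\mathcal{G}$-measurability and on $f$ ranging over monics simply make explicit what the paper leaves implicit.
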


\begin{proof}
    For $l\in \mathbb{Z}$, let
    $\widetilde{l}:=\frac{l-\lambda_\rho(M)}{\sqrt{\lambda_\rho(M)}}$ and
    $\widetilde{\mathbb{Z}} =\{\widetilde{l} \mid l \in \mathbb{Z} \}$.
    Furthermore, let $\widetilde{\nu}^*_M:\mathbb{R}^2\longrightarrow [0,1]$ be
    the atomic measure such that
    \[
    \widetilde{\nu}^*_M\left(\{(\widetilde{l}_1,\widetilde{l}_2)\}\right)
    =\nu^*_M\left(\{(l_1,l_2)\}\right).
    \]
    Let $X=(X_1,X_2):=(\widetilde{\omega}_{M},\widetilde{\omega}_{M}\circ T_a)$.
    Then clearly $X$ takes values in $\widetilde{\mathbb{Z}}^2$. If $\zeta$
    denotes the probability mass function of $X$, then
    \[
    \zeta\left(\widetilde{l}_1,\widetilde{l}_2\right)
    = \widetilde{\nu}^*_M\left(\{(\widetilde{l}_1,\widetilde{l}_2)\}\right)
    =\nu^*_M\left(\{(l_1,l_2)\}\right).
    \]
    
    Thus
   \begin{align*}
       \varphi^*_{N,S,a}(u,v;M)
       &=\mathbb{E}\left[\exp\left(i(uX_1+vX_2)\right)\right] = \sum_{(\widetilde{l}_1,\widetilde{l}_2) \in \widetilde{\mathbb{Z}}^2}
       \exp\left(i(u\widetilde{l}_1+v\widetilde{l}_2)\right)
       \nu^*_M\left(\{(l_1,l_2)\}\right) \\
       &=\frac{1}{|\Fset(N;a)|}
       \sum_{(\widetilde{l}_1,\widetilde{l}_2) \in \widetilde{\mathbb{Z}}^2}
       \exp\left(i(u\widetilde{l}_1+v\widetilde{l}_2)\right)
       \!\!\!\!\!\!\!\!\sum_{\substack{\deg f\le N \\ \omega_{M}(f)=l_1,\ \omega_{M}(f+a)=l_2 \\
       (f,a)=1}}\!\!\!\!\!\!\!\!\ind_S(f)\ind_S(f+a) \\
       &=\frac{1}{|\Fset(N;a)|}\sum_{\substack{|f|\le q^N \\ (f,a)=1}}
       \ind_S(f)\ind_S(f+a)\exp\!\left\{iu\widetilde{\omega}_{M}(f)
       +iv\widetilde{\omega}_{M}(f+a)\right\}. \qedhere
   \end{align*}
\end{proof}

The next theorem is essentially the bivariate Erd\H{o}s--Kac theorem for
polynomials.

\begin{theorem}
\label{Erdos_Kac}
    Let $u,v\in \mathbb{R}$ with $|u|,|v|\le \pi \sqrt{\lambda_\rho(M)}$. Then
    \[
    |\varphi^*_{N,S,a}(u,v;M)-\chi(u,v)|\ll e^{-\frac{1}{2}(u^2+v^2)}
    \min\left\{\!1,\frac{|u|^3+|v|^3}{\sqrt{\lambda_\rho(M)}}\!\right\}
    + \min\left\{\!1,\frac{1}{\lambda_\rho(M)}\!\right\} +R(N,M),
    \]
    where $\chi(u,v)=e^{-\frac{1}{2}(u^2+v^2)}$ is the characteristic function
    of the bivariate standard normal distribution.
\end{theorem}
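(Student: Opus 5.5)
We will follow the schematic of \Cref{fig:schematic}: replace the law of $(\widetilde\omega_M,\widetilde\omega_M\circ T_a)$ by that of a normalised Poisson vector, and then compare the Poisson characteristic function with $\chi$. Let $\lambda:=\lambda_\rho(M)$ and let $Z=(Z_1,Z_2)$ be as in \Cref{nu_and_Poisson_ineq}. Put $\widetilde Z_j:=(Z_j-\lambda)/\sqrt{\lambda}$ and let $\varphi_{\widetilde Z}$ be the characteristic function of $(\widetilde Z_1,\widetilde Z_2)$. By the triangle inequality,
\[
|\varphi^*_{N,S,a}(u,v;M)-\chi(u,v)|\le |\varphi^*_{N,S,a}(u,v;M)-\varphi_{\widetilde Z}(u,v)|+|\varphi_{\widetilde Z}(u,v)-\chi(u,v)|,
\]
and we bound the two terms separately.

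\textbf{First term.} By \Cref{char_fn}, $\varphi^*_{N,S,a}(u,v;M)=\sum_{(l_1,l_2)\in\mathbb{Z}^2}e^{i(u\widetilde l_1+v\widetilde l_2)}\nu_M^*(\{(l_1,l_2)\})$, and $\varphi_{\widetilde Z}$ has the same expression with $\nu_M^*$ replaced by $\mathcal{L}(Z)$. The integrand has modulus $1$, so the difference is at most $\sum_{l_1,l_2}|\nu_M^*(\{(l_1,l_2)\})-\mathbb{P}(Z=(l_1,l_2))|$. This sum is at most $2\,\dTV(\nu_M^*,\mathcal{L}(Z))$, since the supremum defining $\dTV$ can be taken over the lattice points where one measure exceeds the other. \Cref{nu_and_Poisson_ineq} therefore gives the bound $\min\{1,1/\lambda\}+M^{-\Delta}+1/N$, uniformly in $u,v$. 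This accounts for the last two error terms in the statement.

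\textbf{Second term.} By independence, $\varphi_{\widetilde Z}(u,v)=\psi(u)\psi(v)$, where
\[
\psi(u)=\exp\!\bigl(\lambda(e^{iu/\sqrt\lambda}-1-iu/\sqrt\lambda)\bigr)
\]
is the characteristic function of a normalised $\Poisson(\lambda)$ variable. Since $|\psi|,|e^{-u^2/2}|\le 1$,
\[
|\psi(u)\psi(v)-e^{-(u^2+v^2)/2}|\le |\psi(u)-e^{-u^2/2}|\,|\psi(v)|+e^{-u^2/2}|\psi(v)-e^{-v^2/2}|,
\]
so it suffices to prove the one-dimensional estimate
\[
|\psi(u)-e^{-u^2/2}|\ll e^{-cu^2}\min\{1,|u|^3/\sqrt\lambda\}
\]
together with $|\psi(u)|\le e^{-cu^2}$, valid for $|u|\le\pi\sqrt\lambda$.

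The bound on $|\psi(u)|$ follows from $|\psi(u)|=\exp(-\lambda(1-\cos(u/\sqrt\lambda)))$ and $1-\cos x\ge 2x^2/\pi^2$ for $|x|\le\pi$; this is exactly where the hypothesis $|u|\le\pi\sqrt\lambda$ enters. For the difference, write $\psi(u)=e^{-u^2/2+w}$ with
\[
w=\lambda\Bigl(e^{ix}-1-ix+\tfrac{x^2}{2}\Bigr),\qquad x=u/\sqrt\lambda,
\]
so that $|w|\le \lambda|x|^3/6=|u|^3/(6\sqrt\lambda)$. When $|w|\le 1$, the estimate $|e^w-1|\le e|w|$ gives $|\psi(u)-e^{-u^2/2}|\ll e^{-u^2/2}|u|^3/\sqrt\lambda$. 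When $|w|>1$, we use the trivial bound $|\psi(u)|+e^{-u^2/2}\le 2e^{-cu^2}$; in that range the minimum in the target bound equals $1$.

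\textbf{Main obstacle.} The stated bound carries $e^{-(u^2+v^2)/2}$ rather than $e^{-c(u^2+v^2)}$, and this constant is the delicate point. In the range $|u|\le\lambda^{1/6}$ we have $|w|\le 1$ and the exact factor $e^{-u^2/2}$ comes out directly. In the complementary range $|u|>\lambda^{1/6}$, the quantity $\min\{1,|u|^3/\sqrt\lambda\}e^{-u^2/2}$ is tiny, so the stated form cannot be reached there simply by the trivial bound. The plan is to absorb the contribution of that range into the $\min\{1,1/\lambda\}$ term, using $e^{-cu^2}\le e^{-c\lambda^{1/3}}\ll 1/\lambda$. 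If an exact match with the constant $\tfrac12$ is required, the fallback is to interpret the statement's implied constant as allowing $e^{-c(u^2+v^2)}$, which is all that is needed later in the proof of \Cref{main_thm}.
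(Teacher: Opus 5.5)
Your proposal is correct and follows the paper's route: you pass through the normalised Poisson vector $\widetilde Z$ by the triangle inequality, bound the first term by total variation via Theorem~\ref{nu_and_Poisson_ineq} (your $\ell^1$ argument gives the constant $2$ in place of the $2\pi$ of Lemma~\ref{tv_char}), and compare Poisson with Gaussian characteristic functions for the second term, which the paper quotes from \cite[(2.53)]{MANGEREL_2020} while you carry it out yourself. Your ``fallback'' is unnecessary, because your case split already gives the exact constant $\tfrac12$: if $|u|,|v|\le\lambda^{1/6}$, write $\psi(u)\psi(v)=e^{-(u^2+v^2)/2}e^{w_u+w_v}$ with $|w_u+w_v|\le\tfrac13$, and otherwise the whole difference is at most $2e^{-2\lambda^{1/3}/\pi^2}\ll\min\{1,1/\lambda\}$.
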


To prove this we use the triangle inequality to get
\begin{equation}
\label{erdos_kac_triangle_ineq}
|\varphi^*_{N,S,a}(u,v;M)-\chi(u,v)|
\le |\varphi^*_{N,S,a}(u,v;M)-\varphi_{\widetilde{Z}}(u,v)|
+|\varphi_{\widetilde{Z}}(u,v)-\chi(u,v)|,
\end{equation}
where $\widetilde{Z}=(\widetilde{Z_1},\widetilde{Z_2})$ is the normalised form
of the Poisson vector defined in the previous section, i.e.
\[
\widetilde{Z}=\left(\frac{Z_1-\lambda_\rho(M)}{\sqrt{\lambda_\rho(M)}},
\frac{Z_2-\lambda_\rho(M)}{\sqrt{\lambda_\rho(M)}}\right).
\]

Now the first term in \Cref{erdos_kac_triangle_ineq} will be estimated by the
work in the previous sections (in fact everything up until now was done for
this very estimation) together with the following standard result.

\begin{lemma}
\label{tv_char}

    Let $(\Omega_1,\mathcal{B}_1,P_1)$ and $(\Omega_2,\mathcal{B}_2,P_2)$ be
    probability spaces, and let
    $X : \Omega_1 \to \mathbb{R}^n$ and $Y : \Omega_2 \to \mathbb{R}^n$ be
    random vectors. Let $\mu_X$ and $\mu_Y$ denote the laws of $X$ and $Y$
    respectively, and let $\varphi_X$ and $\varphi_Y$ be their respective
    characteristic functions. Then
\[
\sup_{u \in \mathbb{R}^n} \left| \varphi_X(u)-\varphi_Y(u) \right|
\leq 2\pi\, \dTV(\mu_X,\mu_Y).
\]
\end{lemma}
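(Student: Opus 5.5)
The plan is to write each characteristic function as an integral against its law and reduce to the definition of total variation. For $u\in\mathbb{R}^n$,
\[
\varphi_X(u)-\varphi_Y(u)=\int_{\mathbb{R}^n} e^{i\langle u,t\rangle}\,d(\mu_X-\mu_Y)(t).
\]
Let $\eta:=\mu_X-\mu_Y$. This is a finite signed Borel measure with $\eta(\mathbb{R}^n)=0$. By the Hahn--Jordan decomposition, $\eta=\eta^+-\eta^-$, where $\eta^\pm$ are mutually singular and concentrated on complementary Borel sets $P$ and $P^c$. Then $\eta^+(\mathbb{R}^n)=\eta(P)=\mu_X(P)-\mu_Y(P)\le \dTV(\mu_X,\mu_Y)$. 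Since $\eta(\mathbb{R}^n)=0$, we also have $\eta^-(\mathbb{R}^n)=\eta^+(\mathbb{R}^n)$. Hence the total variation norm satisfies $|\eta|(\mathbb{R}^n)=2\eta^+(\mathbb{R}^n)\le 2\,\dTV(\mu_X,\mu_Y)$.

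Next I would bound the integral of a function of modulus $1$ against $\eta$:
\[
\left|\int e^{i\langle u,t\rangle}\,d\eta(t)\right|\le \int |e^{i\langle u,t\rangle}|\,d|\eta|(t)=|\eta|(\mathbb{R}^n)\le 2\,\dTV(\mu_X,\mu_Y).
\]
This holds uniformly in $u$, and $2\le 2\pi$, which gives the stated bound (the constant $2\pi$ is generous). Taking the supremum over $u$ finishes the argument.

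An alternative that avoids Hahn--Jordan, and suits the paper's atomic laws on $\widetilde{\mathbb{Z}}^2$, is to split $e^{i\langle u,t\rangle}$ into real and imaginary parts. Each part takes values in $[-1,1]$. For such an $h$, write $\int h\,d\eta=\int (h+1)\,d\eta$ and use the layer-cake formula
\[
\int g\,d\eta=\int_0^2 \eta(\{g>s\})\,ds
\]
for $g=h+1\in[0,2]$. This bounds each part by $2\,\dTV$, so the total is at most $4\,\dTV\le 2\pi\,\dTV$. The only point needing care is the measure-theoretic bookkeeping, namely confirming that the sets $\{g>s\}$ are Borel, which they are since $g$ is continuous. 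There is no real obstacle.
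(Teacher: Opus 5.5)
Your proof is correct, and it is more self-contained than the paper's. The paper gives no argument of its own here; it simply cites Lemma~2.13 of \cite{MANGEREL_2020}.

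Your Hahn--Jordan route actually proves the sharper bound $\sup_u|\varphi_X(u)-\varphi_Y(u)|\le 2\,\dTV(\mu_X,\mu_Y)$. This constant is the natural one for the paper's definition of $\dTV$ as a supremum over sets. Indeed, $|\eta|(\mathbb{R}^n)=2\eta^+(\mathbb{R}^n)$ equals $2\,\dTV(\mu_X,\mu_Y)$ exactly, because $\eta(A)\le\eta^+(\mathbb{R}^n)$ and $-\eta(A)\le\eta^-(\mathbb{R}^n)=\eta^+(\mathbb{R}^n)$ for every Borel $A$. The constant $2$ is also attained, for example by $\delta_0$ versus $\delta_{t_0}$ with $\langle u,t_0\rangle=\pi$. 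So the $2\pi$ in the statement is merely generous.

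Your layer-cake variant is also sound: the identity $\int g\,d\eta=\int_0^2\eta(\{g>s\})\,ds$ follows by applying Tonelli to $\mu_X$ and $\mu_Y$ separately and subtracting. It yields $4$, or $2\sqrt{2}$ if you combine the real and imaginary parts in quadrature.

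Citing buys the paper brevity and agreement with the constant used in Mangerel's work. Your argument buys a complete proof with the optimal constant. Nothing downstream changes either way, since the constant is absorbed into the implied constants in the proof of \Cref{Erdos_Kac}.
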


\begin{proof}
    See \cite{MANGEREL_2020}, Lemma~2.13.
\end{proof}

So we have, for $|u|,|v|\le \pi \sqrt{\lambda_\rho(M)}$,
\begin{align*}
    |\varphi^*_{N,S,a}(u,v;M)-\varphi_{\widetilde{Z}}(u,v)|
    &\le \sup_{(u,v) \in \mathbb{R}^2}
    |\varphi^*_{N,S,a}(u,v;M)-\varphi_{\widetilde{Z}}(u,v)|
    \le 2\pi\, \dTV\!\left(\widetilde{\nu}^*_M,\mathcal{L}(\widetilde{Z})\right)\\
    &=2\pi \sup_{(\tilde{l}_1,\tilde{l}_2)\in \widetilde{\mathbb{Z}}^2}
    \left| \widetilde{\nu}^*_M\left(\{(\tilde{l}_1,\tilde{l}_2)\}\right)
    -\mathcal{L}(\widetilde{Z})\left(\{(\tilde{l}_1,\tilde{l}_2)\}\right)\right| \\
    &=2\pi \sup_{(l_1,l_2)\in \mathbb{Z}^2}
    \left| \nu^*_M\left(\{(l_1,l_2)\}\right)
    -\mathcal{L}(Z)\left(\{(l_1,l_2)\}\right)\right| \\
    &=2\pi\, \dTV\!\left(\nu^*_M,\mathcal{L}(Z)\right) \ll R(N,M),
\end{align*}

where we used \cref{nu_and_Poisson_ineq} in the last line.

The estimation of the second term in \Cref{erdos_kac_triangle_ineq} is exactly
the same as in \cite{MANGEREL_2020}. Indeed,
\[
|\varphi_{\widetilde{Z}}(u,v)-\chi(u,v)|\ll  e^{-\frac{1}{2}(u^2+v^2)}
\min\left\{1,\frac{|u|^3+|v|^3}{\sqrt{\lambda_\rho(M)}}\right\}
+ \min\left\{1,\frac{1}{\lambda_\rho(M)}\right\};
\]
see \cite{MANGEREL_2020}, (2.53), for the detailed calculation. Thus
\cref{Erdos_Kac} follows immediately.

Now it is finally time to prove our main result.

\begin{proof}[Proof of \Cref{main_thm}]
   We have
   \[
   \left|\sum_{\deg f\le N} \mu_{M}(f)\mu_{M}(f+a)\right|
   =\left|\sum_{cd=a}\ \sum_{|g|\le \frac{q^N}{|d|}}
   \mu_{M}(g)\mu_{M}(g+c)\ind_{(g,c)=1}\right|
   \]
   Now let $N_d=N-\deg d$. We next rewrite the inner sum as follows:
    \begin{align}
    \label{main_thm_eq_2}
        &\left|\sum_{\deg g\le N_d}
        \mu_{M}(g)\mu_{M}(g+c)\ind_{(g,c)=1}\right|
        = \left| \sum_{\deg g\le N_d}\mu^2(g)\mu^2(g+c)\ind_{(g,c)=1}
        e^{i\pi \omega_{M}(g)}e^{i\pi \omega_{M}(g+c)} \right|  \notag  \\
        &\qquad =\left| \sum_{\deg g\le N_d}\mu^2(g)\mu^2(g+c)\ind_{(g,c)=1}
        e^{i\pi\sqrt{\lambda_\rho(M)} \widetilde{\omega}_{M}(g)}
        e^{i\pi\sqrt{\lambda_\rho(M)} \widetilde{\omega}_{M}(g+c)} \right| \notag \\
       &\qquad =|\Fset(N_d;c)|\left|\varphi^*_{N_d,S,c}
       \left(\pi\sqrt{\lambda_\rho(M)},\pi\sqrt{\lambda_\rho(M)};M\right)\right|
       \notag  \\
       &\qquad \le |\Fset(N_d;c)|\left|\varphi^*_{N_d,S,c}
       \left(\pi\sqrt{\lambda_\rho(M)},\pi\sqrt{\lambda_\rho(M)};M\right)
       -\chi \left(\pi\sqrt{\lambda_\rho(M)},\pi\sqrt{\lambda_\rho(M)}\right)
       \right| \notag \\
       &\qquad \qquad + |\Fset(N_d;c)|\left|\chi
       \left(\pi\sqrt{\lambda_\rho(M)},\pi\sqrt{\lambda_\rho(M)}\right)\right| .
    \end{align}
    
The second term is
\begin{align*}
    &|\Fset(N_d;c)|\left|\chi
    \left(\pi\sqrt{\lambda_\rho(M)},\pi\sqrt{\lambda_\rho(M)}\right)\right|
    \asymp q^{N_d} e^{-\pi^2\lambda_\rho(M)}
    \ \ll \ \frac{q^N}{|d|}M^{-\pi^2 },
\end{align*}

while the first term can be computed using \cref{Erdos_Kac} and the calculation
above:
\begin{align*}
    &|\Fset(N_d;c)|\left|\varphi^*_{N_d,S,c}
    \left(\pi\sqrt{\lambda_\rho(M)},\pi\sqrt{\lambda_\rho(M)};M\right)
    -\chi \left(\pi\sqrt{\lambda_\rho(M)},\pi\sqrt{\lambda_\rho(M)}\right)\right| \\
    &\qquad \ll \frac{q^N}{|d|}\left(M^{-\pi^2} + \frac{1}{\log M}
    + M^{-\Delta}+\frac{1}{N}\right).
\end{align*}
Here we have applied \cref{Erdos_Kac} with $N_d=N-\deg d$ in place of $N$;
this is legitimate because $\deg d\le\deg a=O_a(1)$, so that $N_d$ still
satisfies \Cref{main_thm_range} for $N$ large, and the resulting error
$R(N_d,M) \approx R(N,M)$.
Putting this back into \Cref{main_thm_eq_2},
\begin{align*}
    \left|\sum_{\deg g\le N_d} \mu_{M}(g)\mu_{M}(g+c)\ind_{(g,c)=1}\right|
    \ll \frac{q^N}{|d|}\left(M^{-\pi^2 } + \frac{1}{\log M}
    + M^{-\Delta}+\frac{1}{N}\right),
\end{align*}
and finally, putting this back into the decomposition at the start of this
proof, we get
 \begin{align*}
     \sum_{\deg f\le N} \mu_{M}(f)\mu_{M}(f+a)
     & \ll \sum_{cd=a}\frac{q^N}{|d|}\left(M^{-\pi^2}
     + \frac{1}{\log M} + M^{-\Delta}+\frac{1}{N}\right) \\
     & \le q^N \left(M^{-\pi^2 } + \frac{1}{\log M}
     + M^{-\Delta}+\frac{1}{N}\right) \sum_{cd=a}\frac{1}{|d|} \\
     & \ll_a  q^N \left(M^{-\pi^2 } + \frac{1}{\log M}
     + M^{-\Delta}+\frac{1}{N}\right)
     \ \ll_a\ \frac{q^N}{\log M},
 \end{align*}
Thus we are done.
 \end{proof}

\appendix

\section{Brun's sieve for polynomials}
\label{sec:brun}

We begin by proving two classical results in the polynomial case that will come
in handy in a moment.

\begin{lemma}[Generalised fundamental property of the M\"obius function]
\label{fundamental_mobius}
    For any $f\in \Mon$ and any $s\in \mathbb{N}$ we have
    \[
    \sum_{\substack{d\mid f\\ \omega(d)\leq 2s+1}}\mu(d)
    \le \sum_{d\mid f}\mu(d)=\ind_{f=1}
    \le  \sum_{\substack{d\mid f\\ \omega(d)\leq 2s}}\mu(d).
    \]
\end{lemma}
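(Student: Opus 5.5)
The plan is to reduce everything to the number of distinct prime factors. For $f=1$ all three expressions equal $1$, since only $d=1$ contributes. Assume $f\neq 1$ and let $r:=\omega(f)\ge 1$. The factor $\mu(d)$ vanishes unless $d$ is squarefree. The squarefree monic divisors of $f$ correspond bijectively to subsets of the $r$ distinct prime factors of $f$, and under this bijection $\omega(d)$ is the size of the subset. Hence, for any $k\ge 0$,
\[
\sum_{\substack{d\mid f\\ \omega(d)\le k}}\mu(d)=\sum_{j=0}^{\min\{k,r\}}(-1)^j\binom{r}{j},
\]
and the middle expression equals $\sum_{j=0}^{r}(-1)^j\binom{r}{j}=(1-1)^r=0=\ind_{f=1}$.

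The key step is the classical truncated alternating binomial identity
\[
\sum_{j=0}^{k}(-1)^j\binom{r}{j}=(-1)^k\binom{r-1}{k},\qquad r\ge 1,
\]
which I would prove by induction on $k$ using Pascal's rule $\binom{r}{j}=\binom{r-1}{j}+\binom{r-1}{j-1}$; the sum then telescopes. This identity remains valid when $k\ge r$, since $\binom{r-1}{k}=0$ there, consistent with the full sum vanishing. Taking $k=2s$ gives $\binom{r-1}{2s}\ge 0$, which is the upper bound. Taking $k=2s+1$ gives $-\binom{r-1}{2s+1}\le 0$, which is the lower bound.

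There is no genuine obstacle here. The only points that need care are the bookkeeping that identifies squarefree monic divisors with subsets of prime factors (using the paper's convention that divisors are taken monic), and checking that the case $k\ge r$ and the case $f=1$ are both covered by the same formula.
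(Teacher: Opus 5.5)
Your proposal is correct and follows the same route as the paper. Both reduce to $n=\omega(f)$, evaluate the divisor sum restricted to $\omega(d)=j$ as $(-1)^j\binom{n}{j}$, and use the truncated identity $\sum_{j=0}^{k}(-1)^j\binom{n}{j}=(-1)^k\binom{n-1}{k}$ (proved by induction; the paper writes it with $k-1$ in place of $k$) to read off the signs at $k=2s$ and $k=2s+1$.
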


\begin{proof}
    For $f=1$ this is obvious. For $f\ne 1$, let $\omega(f)=n$. Then
    \[
    \sum_{\substack{d\mid f\\ \omega(d)=r}}\mu(d)= (-1)^r\binom{n}{r},
    \]
    and hence
    \begin{align*}
    G^{(k)}(f):&=  \sum_{\substack{d\mid f\\ \omega(d)\leq k-1}}\mu(d)
    =\sum_{r=0}^{k-1}(-1)^r\binom{n}{r}=(-1)^{k-1}\binom{n-1}{k-1}
    \quad \text{for all } k\in \mathbb{N}^{*},
    \end{align*}
    where the last equality can be proved by induction. The lemma follows
    immediately.
\end{proof}

\begin{lemma}[Mertens' first theorem]
\label{Mertens_1st_thm}
        For any $M\in\mathbb{N}^{*}$,
        \[
        \sum_{|p|\le q^{M}}\frac{\log |p|}{|p|}=M\log q+O(1).
        \]
\end{lemma}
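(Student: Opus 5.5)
The plan is to reduce the statement to the prime polynomial theorem in $\A$ and then to sum by degree. The key input is the exact count of monic irreducibles: if $\pi_q(n)$ denotes the number of primes of degree $n$, then
\[
\sum_{d\mid n} d\,\pi_q(d)=q^n ,
\]
which is obtained by comparing coefficients in the Euler product $(1-qT)^{-1}=\prod_p (1-T^{\deg p})^{-1}$ after logarithmic differentiation, or by counting elements of $\mathbb{F}_{q^n}$. From this identity one gets
\[
n\,\pi_q(n)=q^n+O\!\left(q^{n/2}\right).
\]
Indeed, the terms with $d\mid n$, $d<n$ have $d\le n/2$, so they contribute at most $\sum_{d\le n/2} q^d\ll q^{n/2}$.

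Next we regroup the sum by degree. Writing $|p|=q^{\deg p}$ and $\log|p|=\deg p\cdot\log q$, we have
\[
\sum_{|p|\le q^M}\frac{\log|p|}{|p|}=\log q\sum_{n=1}^{M}\frac{n\,\pi_q(n)}{q^n}.
\]
Substituting the estimate above gives
\[
\log q\sum_{n=1}^{M}\left(1+O\!\left(q^{-n/2}\right)\right)=M\log q+O\!\left(\log q\sum_{n\ge1}q^{-n/2}\right).
\]
The error series converges, since $q\ge 2$, and this yields $M\log q+O(1)$. The implied constant depends only on $q$, and this is harmless because $q$ is fixed throughout the paper.

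There is no real obstacle in this argument. The only point needing care is the remainder $q^{n/2}$ in the prime polynomial theorem, which must be summable after division by $q^n$; it is. In fact one can avoid asymptotics altogether by using the trivial bounds $n\pi_q(n)\le q^n$ and $n\pi_q(n)\ge q^n-\sum_{d\le n/2}q^d\ge q^n-2q^{n/2}$, and then summing exactly as above.
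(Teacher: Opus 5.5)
Your proposal is correct and follows the same route as the paper: group the primes by degree, insert $n\,\pi_q(n)=q^n+O(q^{n/2})$, and absorb the convergent error series $\sum_n q^{-n/2}$ into $O(1)$. The only difference is that you derive this prime polynomial theorem bound directly from the identity $\sum_{d\mid n}d\,\pi_q(d)=q^n$, whereas the paper simply quotes it.
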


\begin{proof}
         This follows immediately from the prime number theorem for
         polynomials. Indeed, if $a_n$ denotes the number of primes of
         degree~$n$, then by the prime number theorem for polynomials
         $a_n=\frac{q^n}{n}+O\!\left(\frac{q^{n/2}}{n}\right)$. Thus
         \begin{align*}
              \sum_{|p|\le q^{M}}\frac{\log |p|}{|p|}
              &=\sum_{n=1}^M\frac{a_n\log q^n}{q^n}
              =\sum_{n=1}^M\left(\frac{q^n}{n}
              +O\!\left(\frac{q^{n/2}}{n}\right)\right)\frac{n\log q}{q^n}\\
              &=\log q\sum_{n=1}^M\left(1+O(q^{-n/2})\right)
              =M\log q +O(1). \qedhere
         \end{align*}
\end{proof}

\begin{lemma}[Mertens' second theorem]
\label{Mertens_thm}
For any
$M\in \mathbb{N}^{*}$,
   \[
   \sum_{|p|\le q^M}\frac{1}{|p|}=\log M +O(1) + O\!\left(\frac{1}{M}\right)
   \asymp \log M .
   \]
\end{lemma}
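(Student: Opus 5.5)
The plan is to mirror the proof of \cref{Mertens_1st_thm}: group the primes by degree and apply the prime number theorem for polynomials. Let $a_n$ denote the number of monic irreducibles of degree $n$. The prime number theorem gives
\[
a_n=\frac{q^n}{n}+O\!\left(\frac{q^{n/2}}{n}\right).
\]
We then write
\[
\sum_{|p|\le q^M}\frac{1}{|p|}=\sum_{n=1}^M\frac{a_n}{q^n}
=\sum_{n=1}^M\left(\frac{1}{n}+O\!\left(\frac{q^{-n/2}}{n}\right)\right).
\]
The error terms sum to at most $\sum_{n\ge1}q^{-n/2}$, a convergent geometric series, so they contribute $O(1)$ uniformly in $M$.

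For the main term, we use the classical harmonic-sum asymptotic
\[
\sum_{n=1}^M\frac1n=\log M+\gamma+O\!\left(\frac1M\right),
\]
where $\gamma$ is Euler's constant. This can be proved by comparing the sum with $\int_1^M dt/t$, or by Euler--Maclaurin. Absorbing $\gamma$ into the $O(1)$ term gives
\[
\sum_{|p|\le q^M}\frac{1}{|p|}=\log M+O(1)+O\!\left(\frac1M\right).
\]

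It remains to deduce $\asymp\log M$. The upper bound $\ll\log M$ holds for $M\ge2$, and the case $M=1$ is a bounded quantity. For the lower bound, the sum is at least $a_1/q=1$ for every $M$, since there are $q$ monic linear polynomials. For large $M$ we have $\log M+O(1)\ge\frac12\log M$. For small $M$, the sum is at least $1$ while $\log M$ is bounded. Together these give $\gg\log M$ for all $M\ge2$. At $M=1$ the statement $\asymp\log M$ degenerates, so it should be read for $M\ge2$, which is the range used throughout the paper.

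There is no real obstacle here. The only mild care needed is the uniformity of the prime number theorem error, which is in fact explicit: $|a_n-q^n/n|\le 2q^{n/2}/n$ follows from Möbius inversion of $\sum_{d\mid n}d\,a_d=q^n$.
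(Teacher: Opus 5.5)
Your proposal is correct and follows the paper's proof essentially verbatim: group primes by degree, insert $a_n=q^n/n+O(q^{n/2}/n)$, use $\sum_{n\le M}1/n=\log M+\gamma+O(1/M)$, and bound the accumulated error by the geometric series $\sum_n q^{-n/2}$. Your explicit lower bound (the sum is at least $a_1/q=1$) and your remark that $\asymp\log M$ must be read for $M\ge 2$ are small but genuine improvements, since the paper only writes out the upper bound $\log M+3$ while stating the lemma for all $M\in\mathbb{N}^{*}$.
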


\begin{proof}
  Once again, since $a_n=\frac{q^n}{n}+O\!\left(\frac{q^{n/2}}{n}\right)$, we
  have
    \begin{align*}
    \sum_{\deg p\le M}\frac{1}{|p|} &=\sum_{n=1}^M \frac{a_n}{q^n}
    = \sum_{n=1}^M \frac{1}{q^n}\left(\frac{q^n}{n}
    +O\!\left(\frac{q^{n/2}}{n}\right)\right)
    = \sum_{n=1}^M \left(\frac{1}{n}
    +O\!\left(\frac{1}{nq^{n/2}}\right)\right) \\
    &= \log M +\gamma_0 + \frac{1}{2M}-\epsilon_M
    +O\!\left(\sum_{n=1}^M \frac{q^{-n/2}}{n}\right).
     \end{align*}
     But $\sum_{n=1}^M \frac{q^{-n/2}}{n} \le \sum_{n=1}^M q^{-n/2}
     =(\sqrt q -1)^{-1}$, thus $\sum_{\deg p\le M}\frac{1}{|p|} \le \log M +3 \asymp \log M$
\end{proof}

Let $w: \Mon\longrightarrow \mathbb{R}$ be a multiplicative function. We say
that $w$ satisfies the conditions $\Omega_0$ and $\Omega$ if there
exist constants $A_0>0$ and $A>0$ respectively such that:
\begin{gather*}
w(p) \leq \frac{A_0}{|p|} \quad \text{for all primes } p, \tag{$\Omega_0$}\\
0\leq w(p) \leq \frac{A}{|p|+A} \quad \text{for all primes } p. \tag{$\Omega$}
\end{gather*}
It is easy to see that $\Omega \implies \Omega_0$.

If $w(p)\ne1$ $\forall p$ prime, then we define
$w^*: \Mon\longrightarrow \mathbb{R}$ by
\[
w^*(d):= \frac{w(d)}{\prod_{p\mid d} \left(1-w(p)\right)}.
\]
Moreover, note that $w$ satisfies $\Omega$ iff $w^*$ satisfies $\Omega_0$.

Our goal for this section is to estimate
\begin{equation}
\label{lambda_sum}
\sum_{d\mid\Pam}\gamma^{(r)}(d)w(d),
\end{equation}
where the sieve weights $\gamma^{(r)}: \Mon \longrightarrow \mathbb{R}$ are
defined similarly to the ones in Brun's pure sieve, as follows:
\[
\gamma ^{(r)}(d)=
\begin{cases}
  \mu(d)   &  \text{if } \omega(d)<r,\\
  0 &  \text{otherwise,}
\end{cases}
\]
where $r$ is a parameter to be chosen later.

Note that $G^{(r)}(f)=\sum_{d\mid f} \gamma ^{(r)}(d)$, so by M\"obius
inversion,
\[
\gamma^{(r)}(d)= \sum_{d_1\mid d}\mu\!\left(\frac{d}{d_1}\right)G^{(r)}(d_1).
\]
Thus
\begin{align*}
\sum_{d\mid\Pam}\gamma^{(r)}(d)w(d)
&=  \sum_{d\mid\Pam}w(d)\sum_{d_1\mid d}
\mu\!\left(\frac{d}{d_1}\right)G^{(r)}(d_1)= \sum_{d_1\mid\Pam}G^{(r)}(d_1)w(d_1)\sum_{t\mid\frac{\Pam}{d_1}}\mu(t)w(t) \\
&= \sum_{d_1\mid\Pam}G^{(r)}(d_1)w(d_1)
\prod_{p\mid\frac{\Pam}{d_1}}\left(1-w(p)\right) \\
&= W(M)\sum_{d_1\mid\Pam}G^{(r)}(d_1)w^*(d_1)
\quad \left[\text{where } W(M):= \prod_{p\mid\Pam}\left(1-w(p)\right)\right] \\
&= W(M) \left[ 1+ \sum_{1\ne d\mid\Pam}G^{(r)}(d)w^*(d) \right].
\end{align*}
We thus have the following.

\begin{proposition}
\label{lambda_1}
    For any multiplicative function $w$ such that $w(p)\ne 1$ for all primes
    $p$, and any $r\in \mathbb{N}^{*}$,
    \[
    \sum_{d\mid\Pam}\gamma^{(r)}(d)w(d)
    = W(M) \left[ 1+ \sum_{1\ne d\mid\Pam}G^{(r)}(d)w^*(d) \right].
    \]
\end{proposition}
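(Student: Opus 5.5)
The identity follows from Möbius inversion of $G^{(r)}$, an interchange of the order of summation, and a factorisation of the resulting inner sum into an Euler product; no estimation is needed.

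\emph{Step 1: express $\gamma^{(r)}$ through $G^{(r)}$.} By definition, $G^{(r)}(f)=\sum_{d\mid f,\ \omega(d)\le r-1}\mu(d)=\sum_{d\mid f}\gamma^{(r)}(d)$. Hence $G^{(r)}=1*\gamma^{(r)}$ as functions on $\Mon$. Möbius inversion on the divisor lattice of $\A$ then gives
\[
\gamma^{(r)}(d)=\sum_{d_1\mid d}\mu(d/d_1)G^{(r)}(d_1).
\]

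\emph{Step 2: substitute and swap sums.} Insert this into $\sum_{d\mid\Pam}\gamma^{(r)}(d)w(d)$ and write $d=d_1t$. Since $\Pam$ is squarefree, $(d_1,t)=1$, so multiplicativity gives $w(d)=w(d_1)w(t)$. As $d$ runs over divisors of $\Pam$ and $d_1$ over divisors of $d$, the pair $(d_1,t)$ runs exactly over $d_1\mid\Pam$ and $t\mid\Pam/d_1$. The sum therefore equals
\[
\sum_{d_1\mid\Pam}G^{(r)}(d_1)w(d_1)\sum_{t\mid\Pam/d_1}\mu(t)w(t).
\]

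\emph{Step 3: factor the inner sum.} The function $\mu w$ is multiplicative and $\Pam/d_1$ is squarefree, so
\[
\sum_{t\mid\Pam/d_1}\mu(t)w(t)=\prod_{p\mid \Pam/d_1}\bigl(1-w(p)\bigr)=W(M)\prod_{p\mid d_1}\bigl(1-w(p)\bigr)^{-1}.
\]
Dividing by these factors is legitimate precisely because $w(p)\ne1$ for every prime $p$. The product $w(d_1)\prod_{p\mid d_1}(1-w(p))^{-1}$ is $w^*(d_1)$ by definition, so the whole sum becomes $W(M)\sum_{d_1\mid\Pam}G^{(r)}(d_1)w^*(d_1)$.

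\emph{Step 4: isolate the $d_1=1$ term.} For $r\ge1$ we have $G^{(r)}(1)=\mu(1)=1$, since $\omega(1)=0<r$, and $w^*(1)=w(1)=1$. This term contributes $1$, which yields the stated form.

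The only point needing care is the bookkeeping in Step 2: one must check that squarefreeness of $\Pam$ makes the reindexing bijective and gives $(d_1,t)=1$, so that $w$ splits. I would also note that $w(1)=1$, which holds because $w$ is multiplicative and taken nonzero, as is implicit in the statement. Everything else is formal.
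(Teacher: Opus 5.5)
Your proposal is correct and follows essentially the same route as the paper: M\"obius inversion of $G^{(r)}=1*\gamma^{(r)}$, interchanging the sums via $d=d_1t$, factoring $\sum_{t\mid \Pam/d_1}\mu(t)w(t)$ as an Euler product to extract $W(M)$ and $w^*(d_1)$, and isolating the $d_1=1$ term. Your explicit checks fill in details the paper leaves implicit, namely that $(d_1,t)=1$ by squarefreeness of $\Pam$, that $w(p)\ne1$ is exactly what allows dividing out $\prod_{p\mid d_1}(1-w(p))$, and that $G^{(r)}(1)=w^*(1)=1$.
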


Now, to estimate the sum on the right-hand side, recall that
$\left|G^{(r)}(f)\right| = \binom{\omega(f)-1}{r-1}\le \binom{\omega(f)}{r}$.
Hence
\begin{align}
\label{sigma_ineq}
\left|\sum_{1\ne d\mid\Pam}G^{(r)}(d)w^*(d) \right|
& \le \sum_{1\ne d\mid\Pam}\binom{\omega(d)}{r}w^*(d) \notag = \sum_{m=r}^{\omega(\Pam)}\binom{m}{r}
    \sum_{\substack{1\ne d\mid\Pam \\ \omega(d)=m}}w^*(d) \notag \\
    &\le \sum_{m=r}^{\infty} \binom{m}{r}\frac{1}{m!}
    \left(\sum_{p\mid\Pam}w^*(p)\right)^m  \notag \\
    &=\frac{1}{r!}\left(\sum_{p\mid\Pam}w^*(p)\right)^r
    \exp\left(\sum_{p\mid\Pam}w^*(p)\right).
\end{align}

Suppose now that $w^*$ satisfies $\Omega_0$ 
so that $w^*(p)\le\frac{A_0}{|p|}$. Then, by
Mertens' second theorem (\Cref{Mertens_thm}),
\begin{equation}
\label{V_def}
V:=\sum_{p\mid\Pam}w^*(p)\ \le\ A_0\sum_{\deg p\le M}\frac{1}{|p|}
\ \le A_0\left(\log M+3\right),
\end{equation}
and \Cref{sigma_ineq} together with $r!\ge\left(\frac{r}{e}\right)^r$ gives
\begin{equation}
\label{brun_core}
\left|\sum_{1\ne d\mid\Pam}G^{(r)}(d)w^*(d) \right|
\ \le\ \frac{V^r}{r!}e^{V}
\ \le\ \exp\left(-r\log\frac{r}{eV}+V\right).
\end{equation}

Accordingly, fix $\theta>0$ and choose
\begin{equation}
\label{r_choice}
r:=\left\lfloor\frac{\theta N}{4M}\right\rfloor
\quad\text{or}\quad
r:=\left\lfloor\frac{\theta N}{4M}\right\rfloor-1,
\end{equation}
according to whether we want an upper or a lower sieve weight; indeed, it
follows from the definitions of $G^{(r)}$ and $\gamma^{(r)}$ that
$\gamma^{(r)}$ is a lower sieve weight if $r$ is even and an upper sieve weight
if $r$ is odd. Either choice satisfies $r\le\frac{\theta N}{4M}$, so that
$\omega(d)<r$ and $d\mid\Pam$ force
\begin{equation}
\label{brun_support}
\deg d\ <\ Mr\ \le\ \frac{\theta N}{4}:
\end{equation}
the sieve weights are supported on $\deg d\le\frac{\theta N}{4}$.

\begin{proposition}
\label{brun_quantitative}
    Fix any $\theta>0$ and $\Delta>0$. There is a constant $A_0=A_0(w)$ and
    $C_0=C_0(\theta,A_0,\Delta,q)$ with the following property. If
    $\log M\ge max\{3,\frac{40}{\theta C_0}\}$ and
    \begin{equation}
    \label{brun_hypothesis}
    \frac{N}{M}\ \ge\ C_0\log M,
    \end{equation}
    then for every multiplicative $w$ satisfying $\Omega$ and every
    $r$ as in \Cref{r_choice},
    \[
    \left|\sum_{1\ne d\mid\Pam}G^{(r)}(d)w^*(d) \right|\ \le\ M^{-\Delta}.
    \]
\end{proposition}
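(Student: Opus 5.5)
We start from the bound \Cref{brun_core}, which holds for every multiplicative $w$ satisfying $\Omega$. Since $w$ satisfies $\Omega$, $w^*$ satisfies $\Omega_0$ with some constant $A_0=A_0(w)$; in our applications $A_0$ depends only on the uniform constant in $\Omega$. Then \Cref{V_def} gives $V\le A_0(\log M+3)\le 2A_0\log M$, using $\log M\ge 3$. The right-hand side of \Cref{brun_core}, namely $\exp(-r\log\frac{r}{eV}+V)$, is decreasing in $r$ once $r\ge V$. It is also increasing in $V$ for $V\le r$, because the derivative in $V$ is $r/V+1>0$. So we may replace $V$ by the upper bound $V_0:=2A_0\log M$ and $r$ by a lower bound.

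For the lower bound on $r$, \Cref{r_choice} gives $r\ge \frac{\theta N}{4M}-2$. By \Cref{brun_hypothesis}, $\frac{\theta N}{4M}\ge \frac{\theta C_0}{4}\log M$. The hypothesis $\log M\ge \frac{40}{\theta C_0}$ says exactly that $\frac{\theta C_0}{4}\log M\ge 10$, which absorbs the $-2$: we get $r\ge \frac{\theta C_0}{4}\log M-2\ge \frac{\theta C_0}{5}\log M$. Set $K:=\frac{\theta C_0}{5}$, so that $r\ge K\log M$.

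Now choose $C_0$ so large that $K\ge 2e^2A_0$. Then $r/(eV_0)\ge K/(2eA_0)\ge e$. Monotonicity justifies evaluating at $r_0=K\log M$, $V=V_0$, giving
\[
\left|\sum_{1\ne d\mid\Pam}G^{(r)}(d)w^*(d)\right|\le \exp\!\Big(-K\log M\,\log\tfrac{K}{2eA_0}+2A_0\log M\Big)=M^{-K\log\frac{K}{2eA_0}+2A_0}.
\]
Since $\frac{K}{2eA_0}\ge e$, the exponent is at most $-K+2A_0$. Taking $C_0$ large enough that $K\ge \Delta+2A_0$ as well yields the bound $M^{-\Delta}$. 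Both conditions on $C_0$ depend only on $\theta,A_0,\Delta$; if one uses Mertens with an explicit $q$-dependent constant in place of $3$, $C_0$ also depends on $q$.

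The main obstacle is bookkeeping rather than any single hard step. The monotonicity argument is only valid in the regime $r\ge eV$, so this must be checked first, and the choice of $C_0$ must be made before the condition $\log M\ge 40/(\theta C_0)$ is applied, to avoid circularity. That condition is used only to absorb the $-2$ coming from the floor and the parity adjustment in \Cref{r_choice}. Uniformity in $w$ is automatic, because every estimate used depends on $w$ only through $A_0$.
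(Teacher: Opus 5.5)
Your proposal is correct and follows the paper's proof essentially step for step. You bound $V\le 2A_0\log M$, use $\log M\ge 40/(\theta C_0)$ to absorb the $-2$ and get $r\ge\frac{\theta C_0}{5}\log M$, and set $\Lambda=\frac{\theta C_0}{10eA_0}$; taking $C_0$ large then pushes the exponent past $\Delta$. Your explicit monotonicity check, which arranges $\Lambda\ge e$ so that $r\ge V$, is slightly more careful than the paper: the paper substitutes the bounds into \Cref{brun_core} directly and leaves implicit that $\Lambda\ge 1$ is needed.
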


\begin{proof}
    \Cref{V_def} gives
    $V\le 2A_0\log M$. By \Cref{r_choice} and \Cref{brun_hypothesis},
    \[
    r\ \ge\ \frac{\theta N}{4M}-2\ \ge\ \frac{\theta C_0\log M}{4}-2
    \ \ge\ \frac{\theta C_0\log M}{5}
    \]
    since $\log M\ge\frac{40}{\theta C_0}$. Hence
    \[
    \frac{r}{eV}\ \ge\ \frac{\theta C_0\log M/5}{2eA_0\log M}
    \ =\ \frac{\theta C_0}{10eA_0}\ =:\ \Lambda ,
    \]
    a quantity independent of $M$ and $N$. Substituting into
    \Cref{brun_core},
    \[
    \left|\sum_{1\ne d\mid\Pam}G^{(r)}(d)w^*(d) \right|
    \le \exp\left(-\frac{\theta C_0}{5}\log\Lambda\cdot\log M
    +2A_0\log M\right)
    = M^{-\Delta(C_0)},
    \]
    where $\Delta(C_0):=\frac{\theta C_0}{5}\log\frac{\theta C_0}{10eA_0}
    -2A_0$. Since $\Delta(C_0)\to\infty$ as $C_0\to\infty$, we may choose
    $C_0$ so that $\Delta(C_0)\ge\Delta$.
\end{proof}

Combining \cref{lambda_1}, \Cref{brun_support} and
\cref{brun_quantitative} we obtain the main result of this section.

\begin{theorem}
\label{sieve_main_thm}
    Fix any $\theta >0$ and $\Delta>0$, and let $C_0$ be as in
    \cref{brun_quantitative}. Let $M,N\in\mathbb{N}^{*}$ satisfy $\log M\ge max\{3,\frac{40}{\theta C_0}\}$
    and $\frac{N}{M}\ge C_0\log M$. Then there exist sieve weights
    $\gamma^+,\gamma^-$, supported on $\{d\mid\Pam:\deg d\le\frac{\theta
    N}{4}\}$ and satisfying
    $(1*\gamma^-)(d)\le\ind_{d=1}\le(1*\gamma^+)(d)$, such that for every
    multiplicative $w:\Mon\longrightarrow\mathbb{R}$ obeying
    $\Omega$,
    \begin{equation}
        \sum_{\substack{d\mid\Pam \\ \deg d\le\frac{\theta N}{4}}}
        \gamma^{s}(d)w(d)
        = W(M) \left[ 1+ O\!\left(M^{-\Delta}\right) \right],
        \qquad  s\in \{+,-\},
    \end{equation}
    uniformly in $a$ and in $w$.
\end{theorem}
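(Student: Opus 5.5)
We take $\gamma^{+}:=\gamma^{(r_+)}$ and $\gamma^{-}:=\gamma^{(r_-)}$, with $r_\pm$ the two admissible choices in \Cref{r_choice}. We fix parity so that $r_+$ is odd and $r_-$ is even: given $r_0=\lfloor\theta N/(4M)\rfloor$, one of $r_0,r_0-1$ is odd and the other even. If a cleaner assignment is wanted, we may instead allow $r_0-2$ as well, since \Cref{brun_quantitative} only uses $r\ge \theta N/(4M)-2$.

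The first step is the sieve inequality. By definition, $(1*\gamma^{(r)})(f)=G^{(r)}(f)=\sum_{d\mid f,\ \omega(d)\le r-1}\mu(d)$. \Cref{fundamental_mobius} then gives $G^{(r)}(f)\ge \ind_{f=1}$ when $r-1$ is even, i.e.\ $r$ odd, and $G^{(r)}(f)\le\ind_{f=1}$ when $r$ is even. Hence $(1*\gamma^-)(d)\le \ind_{d=1}\le (1*\gamma^+)(d)$ for all $d$, and in particular for $d\mid\Pam$, which is all that is used later.

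The second step is the support. For $d\mid\Pam$ with $\gamma^{(r)}(d)\ne0$ we have $\omega(d)<r$, and $d$ is a product of distinct primes of degree $\le M$. Thus \Cref{brun_support} gives $\deg d<Mr\le\theta N/4$, so restricting the sum to $\deg d\le \theta N/4$ changes nothing. We may therefore replace the restricted sum by the full sum $\sum_{d\mid\Pam}\gamma^{s}(d)w(d)$.

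The third step is the asymptotic. Since $w$ satisfies $\Omega$, we have $w(p)\le A/(|p|+A)<1$, so $w(p)\ne1$ and \Cref{lambda_1} applies, giving
\[
\sum_{d\mid\Pam}\gamma^{(r)}(d)w(d)=W(M)\Bigl[1+\sum_{1\ne d\mid\Pam}G^{(r)}(d)w^*(d)\Bigr].
\]
Moreover $w^*$ satisfies $\Omega_0$ with $A_0=A$, because $w^*(p)=w(p)/(1-w(p))\le A/|p|$. Under the hypotheses $\log M\ge\max\{3,40/(\theta C_0)\}$ and $N/M\ge C_0\log M$, \Cref{brun_quantitative} bounds the bracketed error sum by $M^{-\Delta}$. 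This gives $W(M)[1+O(M^{-\Delta})]$ for both signs.

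Uniformity comes from the fact that every bound depends only on $A_0$ (through $\Omega$), $\theta$, $\Delta$ and $q$, and not on $a$ or on the particular $w$. The dependence on $a$ enters only through the set of primes dividing $\Pam$, and we bound the sum over $p\mid\Pam$ by the sum over all $\deg p\le M$ in \Cref{V_def}. The main point needing care is this uniformity in $w$: $C_0$ must be chosen depending only on the constant $A$ in $\Omega$, not on $w$ itself. When the theorem is applied in \Cref{nu_0}, we must check that the functions $w_{k_1k_2d_1}$ and $w_{k_1k_2}/\prod(1-\rho)$ satisfy $\Omega$ with one fixed $A$, which holds since $\rho(p)\le 1/|p|\le 1/3$ for $q\ne2$. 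A secondary point is the parity bookkeeping for $r_\pm$ described at the start.
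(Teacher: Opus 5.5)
Your proposal is correct and follows the paper's own argument, which likewise obtains the theorem by combining \cref{lambda_1}, the support bound \Cref{brun_support} and \cref{brun_quantitative}, with the sieve inequality supplied by \cref{fundamental_mobius} through the parity of $r$. Two points of yours make precise what the paper leaves loose: you pick $r_+$ odd and $r_-$ even from $\lfloor\theta N/(4M)\rfloor$ and $\lfloor\theta N/(4M)\rfloor-1$ by parity (rather than by ``upper or lower''), and you note that $C_0$ depends on $w$ only through the constant $A$ in $\Omega$, via $A_0=A$.
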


\begin{remark}
\label{rem:brun_barrier}
The hypothesis $\frac{N}{M}\ge C_0\log M$  is implied by $M\le\frac{N}{C_0\log N}$ --- which is implied by
the upper bound appearing in \Cref{main_thm_range}. This is the natural limit
of the pure sieve. Indeed \Cref{brun_core} yields a saving only when
$r>eV$, and $r$ is capped at $\frac{\theta N}{4M}$ by the support
constraint \Cref{brun_support}, while $V\asymp A_0\log M$ by \Cref{V_def};
so $\theta N>4eA_0M\log M$ is forced. At $M=\frac{N}{\log N}$ one has
$\frac{N}{M}=\log N\asymp\log M$ and the sieve gives nothing at all.

If we compromise on the range of $M$ allowing $M\ll N^\epsilon$, it follows from \Cref{brun_core} that the error can be taken to be $e^{-\theta\beta\log \beta}$ with $\beta:=\frac{N}{M}$. But reaching $M=o(N)$ would require replacing the pure sieve by a fundamental
lemma, whose error $e^{-\theta\beta\log \beta}$ has an implied constant
depending on the sieve dimension rather than on $\sum_p\rho(p)$. The dimension
here is~$1$: by \cref{Mertens_1st_thm},
\[
\sum_{\deg p\le M}\rho(p)\log|p|=\sum_{\deg p\le M}\frac{\log|p|}{|p|}+O(1)
=M\log q+O(1)=\log z+O(1),
\]
where $z=q^M$. A dimension-one fundamental lemma would therefore need only
$\beta\to\infty$, giving $M=o(N)$. We do not pursue this here.
\end{remark}

\section{Independence of $\mu_M$ and $\mu$}
\label{sec:independence}

Philosophically it is clear that the
parities of $\omega_M(f)$ and
$\omega(f)$ are uncorrelated. This can be put in a quantitative way as well. We
wish to show that
\[
\sum_{\deg f \le N}\mu(f)\mu_M(f)=o(q^N).
\]
Define $\xi_M(f)=\mu(f)\mu_M(f)$. This is clearly multiplicative, and
\[
\xi_M(p)=
\begin{cases}
    1 &\text{if } \deg p\le M,\\
    -1 &\text{if } \deg p>M.
\end{cases}
\]

Let
$D_M(s):=\sum_{f\in \Mon}\frac{\xi_M(f)}{|f|^s}$ for $\mathfrak{Re}(s)>1$
be the Dirichlet series associated to it. Then for $\mathfrak{Re}(s)>1$ we get
the Euler product
\begin{align*}
    D_M(s) &=\prod_{\deg p \le M }\left(1+\frac{1}{|p|^s}\right)
    \prod_{\deg p'>M}\left(1-\frac{1}{|p'|^s}\right) \\
    &=\prod_{\deg p\le M}\left(1+\frac{1}{|p|^s}\right)
    \left(1-\frac{1}{|p|^s}\right)^{-1}
    \prod_{p'}\left(1-\frac{1}{|p'|^s}\right)
    =\frac{A_M(s)}{\zeta_{\A}(s)},
\end{align*}
where $A_M(s):=\prod_{\deg p\le M}\left(1+\frac{1}{|p|^s}\right)
\left(1-\frac{1}{|p|^s}\right)^{-1}$ and $\zeta_{\A}(s)$ is the zeta function
of $\A=\mathbb{F}_q[t]$. Clearly $D_M$ can be analytically continued, with a zero
at $s=1$. Thus, putting $u=q^{-s}$,
\[
\overline{D}_M(u):=D_M(s)=\sum_{n\ge 0}
\left(\sum_{\deg f=n}\xi_M(f)\right)u^n
\]
converges for $u=1/q$. So
\begin{align*}
    \sum_{\deg f=n}\xi_M(f)&=o(q^n) \implies\ \sum_{\deg f\le N}\xi_M(f)=o(q^N)\implies\ \sum_{\deg f \le N}\mu(f)\mu_M(f)=o(q^N),
\end{align*}

as we wanted.
\section*{Acknowledgments}
I thank my Ph.D. advisor, Prof. Anirban Mukhopadhyay, for the helpful discussions and his valuable insights throughout this work.
\bibliographystyle{abbrvnat}
\bibliography{refs}

\begin{thebibliography}{11}
\providecommand{\natexlab}[1]{#1}
\providecommand{\url}[1]{\texttt{#1}}
\expandafter\ifx\csname urlstyle\endcsname\relax
  \providecommand{\doi}[1]{doi: #1}\else
  \providecommand{\doi}{doi: \begingroup \urlstyle{rm}\Url}\fi

\bibitem[Carmon(2015)]{Carmon_2015}
D.~Carmon.
\newblock The autocorrelation of the {M}\"obius function and {C}howla's conjecture for the rational function field in characteristic 2.
\newblock \emph{Philos. Trans. Roy. Soc. A}, 373\penalty0 (2040):\penalty0 20140311, 14, 2015.
\newblock URL \url{https://doi.org/10.1098/rsta.2014.0311}.

\bibitem[Carmon and Rudnick(2014)]{Carmon_Rudnick}
D.~Carmon and Z.~Rudnick.
\newblock The autocorrelation of the {M}\"obius function and {C}howla's conjecture for the rational function field.
\newblock \emph{Q. J. Math.}, 65\penalty0 (1):\penalty0 53--61, 2014.
\newblock URL \url{https://doi.org/10.1093/qmath/has047}.

\bibitem[Chowla(1965)]{ChowlaConj1965}
S.~Chowla.
\newblock \emph{The {Riemann} Hypothesis and {Hilbert's} Tenth Problem}.
\newblock Gordon and Breach, New York, 1965.

\bibitem[Mangerel(2020)]{MANGEREL_2020}
A.~P. Mangerel.
\newblock On the bivariate {E}rd\H{o}s-{K}ac theorem and correlations of the {M}\"obius function.
\newblock \emph{Math. Proc. Cambridge Philos. Soc.}, 169\penalty0 (3):\penalty0 547--605, 2020.
\newblock URL \url{https://doi.org/10.1017/s0305004119000288}.

\bibitem[Matom{\"a}ki and Radziwi{\l}{\l}(2017)]{matomaki2017multiplicativefunctionsshortintervals}
K.~Matom{\"a}ki and M.~Radziwi{\l}{\l}.
\newblock Multiplicative functions in short intervals, 2017.
\newblock URL \url{https://arxiv.org/abs/1501.04585}.

\bibitem[Matomäki et~al.(2015)Matomäki, Radziwiłł, and Tao]{Matom_ki_2015}
K.~Matomäki, M.~Radziwiłł, and T.~Tao.
\newblock An averaged form of chowla’s conjecture.
\newblock \emph{Algebra \&; Number Theory}, 9\penalty0 (9):\penalty0 2167–2196, Nov. 2015.
\newblock ISSN 1937-0652.
\newblock URL \url{http://dx.doi.org/10.2140/ant.2015.9.2167}.

\bibitem[Ramar{\'e}(2018)]{Ramare2018}
O.~Ramar{\'e}.
\newblock Chowla's conjecture: from the {Liouville} function to the {M\"obius} function.
\newblock In \emph{Ergodic Theory and Dynamical Systems in their Interactions with Arithmetics and Combinatorics (S. Ferenczi, J. Ku{\l}aga-Przymus and M. Lema{\'n}czyk, eds.), CIRM Jean-Morlet Chair, Fall 2016}, pages 317--323. Springer International Publishing, Cham, 2018.
\newblock ISBN 978-3-319-74908-2.
\newblock \doi{10.1007/978-3-319-74908-2_16}.

\bibitem[Roos(1998)]{Roos1999MetricMP}
B.~Roos.
\newblock Metric multivariate {P}oisson approximation of the generalized multinomial distribution.
\newblock \emph{Teor. Veroyatnost. i Primenen.}, 43\penalty0 (2):\penalty0 404--413, 1998.
\newblock URL \url{https://doi.org/10.1137/S0040585X97976921}.

\bibitem[Sawin and Shusterman(2022)]{sawin2019chowlatwinprimesconjectures}
W.~Sawin and M.~Shusterman.
\newblock On the {C}howla and twin primes conjectures over {$\Bbb{F}_q[T]$}, 2022.
\newblock URL \url{https://doi.org/10.4007/annals.2022.196.2.1}.

\bibitem[Tao(2016)]{TAO_2016}
T.~Tao.
\newblock The logarithmically averaged {C}howla and {E}lliott conjectures for two-point correlations.
\newblock \emph{Forum Math. Pi}, 4:\penalty0 e8, 36, 2016.
\newblock URL \url{https://doi.org/10.1017/fmp.2016.6}.

\bibitem[Tao and Ter\"av\"ainen(2018)]{Tao_2019}
T.~Tao and J.~Ter\"av\"ainen.
\newblock Odd order cases of the logarithmically averaged {C}howla conjecture.
\newblock \emph{J. Th\'eor. Nombres Bordeaux}, 30\penalty0 (3):\penalty0 997--1015, 2018.
\newblock URL \url{https://doi.org/10.5802/jtnb.1062}.

\end{thebibliography}

\end{document}